\def\biblbl{\bibitem}
\theoremstyle{plain}
\newtheorem{theorem}{Theorem}[section]
\newtheorem{lemma}[theorem]{Lemma}
\newtheorem{corollary}[theorem]{Corollary}
\newtheorem{proposition}[theorem]{Proposition}
\newtheorem{definition}{Definition}
\theoremstyle{definition}
\newtheorem{remark}[theorem]{Remark}
\newtheorem{example}[theorem]{Example}
\newcommand\begin{figure}
\newcommand\FIGc[3]{\begin{figure}[htpb]
    \includegraphics[height=#3]{#1.eps}
    \caption{#2}
    \label{fig:#1}
    \end{figure}}
\newcommand\incl[2]{{\includegraphics[height=#1]{#2.eps}}}
\newcommand{\stateS}{\mathscr{S}}
\newcommand{\no}[1]{}
\def\BC{\mathbb C}
\def\BN{\mathbb N}
\def\BZ{\mathbb Z}
\def\CO{\mathcal O}
\def\CR{\mathcal R}
\def\CS{\mathcal S}
\def\la{\langle}
\def\ra{\rangle}
\def\al{\alpha}
\def\ve{\varepsilon}
\def\be { \begin{equation} }
\def\ee { \end{equation} }
\def\bS{\overline \CS}
\def\cS{\mathscr S}
\def\ot{\otimes}
\def\cE{\mathcal E}
\def\bn{\mathbf n}
\def\cP{\mathcal P}
\def\tP{\widehat \Phi}
\def\fS{\mathfrak S}
\def\bfS{\overline{\fS}}
\def\cX{\mathcal X}
\def\embed{\hookrightarrow}
\def\cV{\mathcal V}
\def\cX{\mathcal X}
\def\cX{\mathcal X}
\def\SM{(\Sigma,\cP)}
\def\pS{\partial \Sigma}
\def\id{\mathrm{id}}
\def\pM{\partial M}
\def\cN{\mathcal N}
\def\MN{(M,\cN)}
\def\tPhi{\widetilde{\Phi}}
\def\fr{\operatorname{fr}}
\def\ord{\mathrm{ord}}
\def\Sx{\cS_\omega}
\def\Se{\cS_\eta}
\def\tSe{\widehat{\cS}_\eta}
\def\MNp{(M',\cN')}
\def\cSp{\cS^+}
\def\Po{\Phi_\omega}
\def\TD{\Theta_D}
\def\ord{\mathrm{ord}}
\begin{document}

\title[Chebyshev-Frobenius homomorphism]{The Chebyshev-Frobenius homomorphism for stated skein modules of 3-manifolds}

\author[Wade Bloomquist]{Wade Bloomquist}
\address{School of Mathematics, 686 Cherry Street,
 Georgia Tech, Atlanta, GA 30332, USA}
\email{wbloomquist3@gatech.edu}

\author[Thang  T. Q. L\^e]{Thang  T. Q. L\^e}
\address{School of Mathematics, 686 Cherry Street,
 Georgia Tech, Atlanta, GA 30332, USA}
\email{letu@math.gatech.edu}


\thanks{
2010 {\em Mathematics Classification:} Primary 57N10. Secondary 57M25.\\
{\em Key words and phrases: Kauffman bracket skein module, Chebyshev homomorphism.}}

\begin{abstract}
We study the stated skein modules of marked $3-$manifolds.  We generalize the splitting homomorphism for stated skein algebras of surfaces to a splitting homomorphism  for stated skein modules of $3-$manifolds. We show that there exists a Chebyshev-Frobenius homomorphism for the stated skein modules of 3-manifolds which extends the Chebyshev homomorphism of the skein algebras of unmarked surfaces originally constructed by Bonahon and Wong.  Additionally, we show that the Chebyshev-Frobenius map commutes with the splitting homomorphism. This is then used to show that in the case of the stated skein algebra of a surface, the Chebyshev-Frobenius map is the unique extension of the dual Frobenius map (in the sense of Lusztig) of $\mathcal{O}_{q^2}(SL(2))$ through the triangular decomposition afforded by an ideal triangulation of the surface.  In particular, this gives a skein theoretic construction of the Hopf dual of Lusztig's Frobenius homomorphism.  A second conceptual framework is given, which shows that the Chebyshev-Frobenius homomorphism for the stated skein algebra of a surface is the unique restriction of the Frobenius homomorphism of quantum tori through the quantum trace map.
\end{abstract}

\maketitle

\def\cSp{\cS^+}

\section{Introduction} 

\subsection{Skein modules/algebras}
For an oriented 3-dimensional manifold, $M$, the Kauffman bracket skein module, $\cS(M)$, is the quotient of the free module spanned by isotopy classes of framed links in $M$ subject to the relations (A) and (B)  of  Figure \ref{fig:stated-relations}. Here the ground ring is a commutative domain with a distinguished invertible element $q^{1/2}$.
An explanation of how to interpret these figures can be found in Subsection \ref{SkeinModule}.

\begin{figure}[htpb]
\begin{subfigure}[b]{0.45\linewidth}
\centering
\includegraphics[scale=.35]{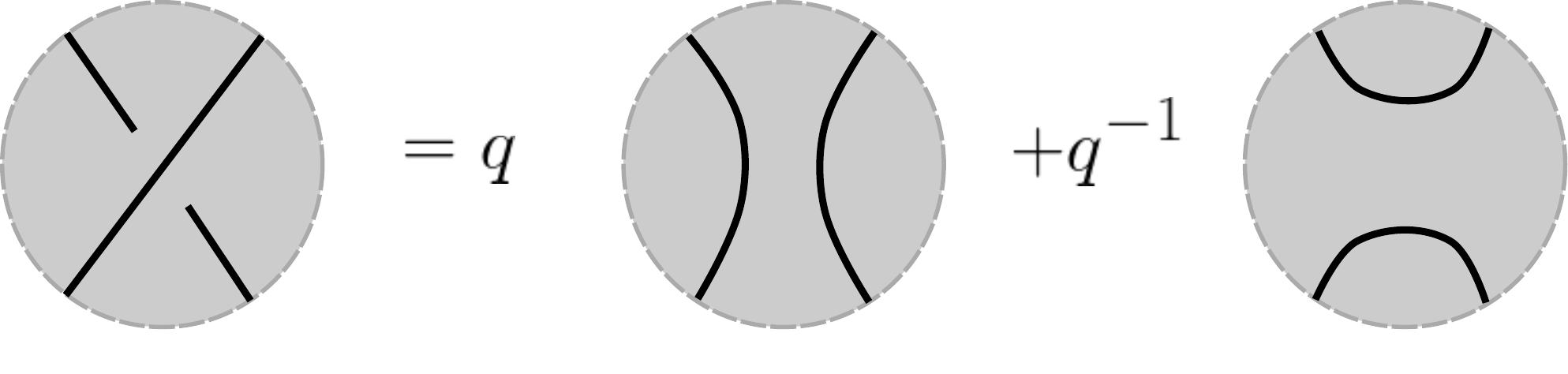}
\subcaption{Skein relation}
\end{subfigure}
\hfill
\begin{subfigure}[b]{0.45\linewidth}
\centering
\includegraphics[scale=.35]{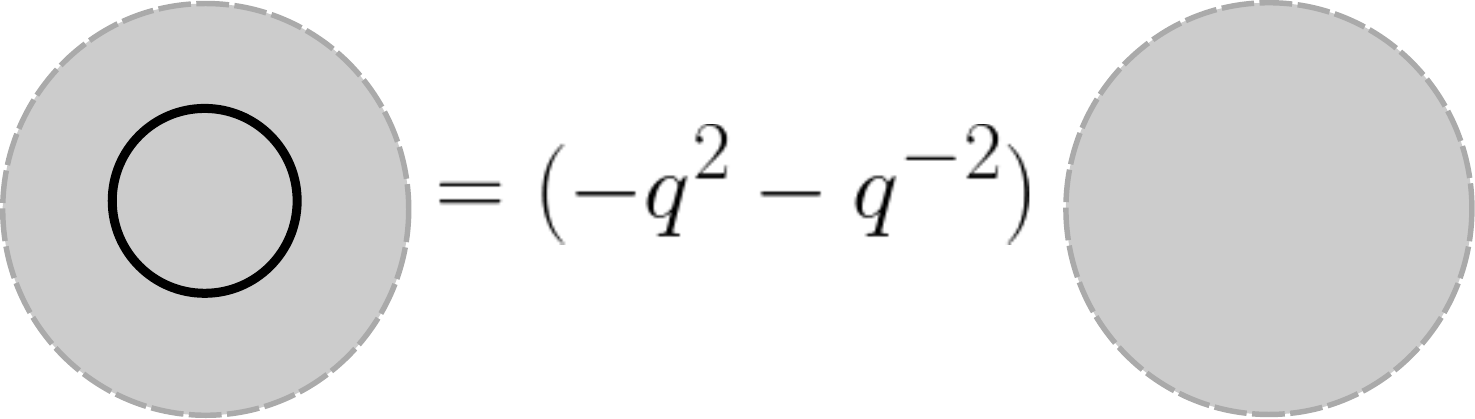}
\subcaption{Trivial knot relation}
\end{subfigure}
\begin{subfigure}[b]{0.45\linewidth}
\centering
\includegraphics[scale=.35]{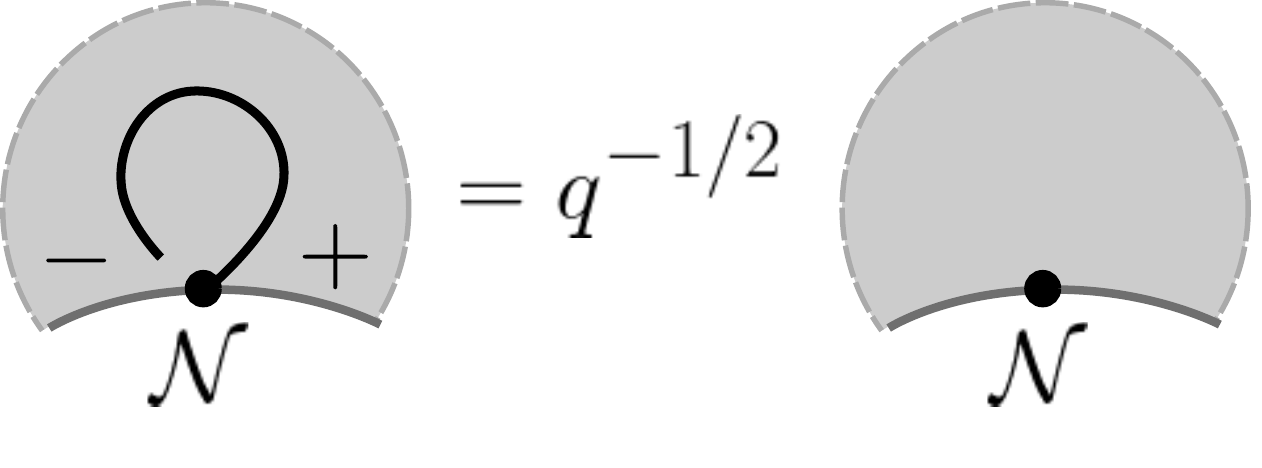}
\subcaption{Trivial arc relation 1}
\end{subfigure}
\hfill
\begin{subfigure}[b]{0.45\linewidth}
\centering
\includegraphics[scale=.35]{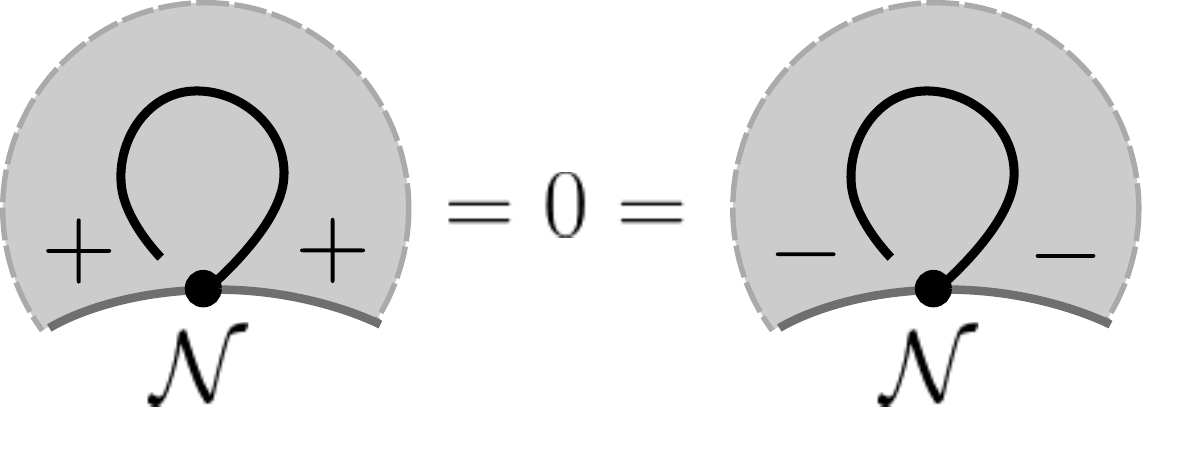}
\subcaption{Trivial arc relation 2}
\end{subfigure}
\begin{subfigure}[b]{\linewidth}
\centering
\includegraphics[scale=.35]{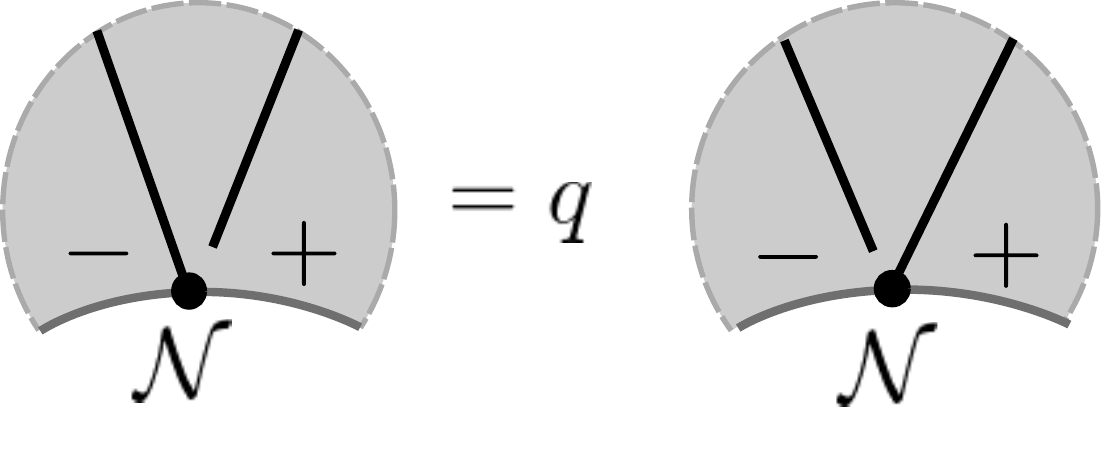}
\subcaption{State exchange relation}
\end{subfigure}
\caption{The defining relations in $\stateS(M,\mathcal{N})$}
\label{fig:stated-relations}
\end{figure}

Skein modules were introduced independently by Przytycki in \cite{Przy} and Turaev in \cite{Turaev,Turaev2}, building on work of Kauffman \cite{Ka} in his study of the Jones polynomial.  These skein modules have found many connections throughout low dimensional topology and quantum algebra.
In the specific case of the thickening of a surface, Turaev introduced an algebra structure for skein modules via vertical stacking while exploring quantizations of the Atiyah-Bott-Weil-Petersson-Goldman symplectic structure of the character variety \cite{Turaev}.

Bonahon and Wong \cite{BW0} showed that for a surface with at least one puncture the skein algebra  can be embedded, through a map called the quantum trace, into a quantum torus which is known as the quantum Teichm\"uller space of Chekhov and Fock \cite{CF}.  When $q=1$, the quantum trace map calculates the trace of a closed curve in the shear coordinates of Teichm\"uller space.
The existence of the quantum trace map based on an ideal triangulation of the surface indicated that skein algebras should decompose into elementary blocks.  This idea was formalized by the second author, \cite{Le4}, through the introduction of stated skein algebras of punctured bordered surfaces.
Facilitating the decomposition of stated skein algebras is the existence of a splitting homomorphism, which maps a stated skein algebra of a surface to the stated skein algebra of the result of cutting the surface along an ideal arc.  This splitting homomorphism not only provides insight into the quantum trace map, but also provides a wealth of structure for stated skein algebras in terms of quantum groups utilizing an isomorphism of the stated skein algebra of the bigon with $\mathcal{O}_{q^2}(SL(2))$, see \cite{CL}.  The stated skein algebra considered here involves tangles which may end on the boundary of the surface, and is a quotient of a different, larger, stated skein algebra originally considered by Bonahon and Wong.

In this paper we study the stated skein modules of marked $3-$manifolds.  Where a marked $3-$manifold is a $3-$manifold, $M$, along with a collection of markings, $\cN$, which are oriented arcs on $\partial M$.  A {\em stated $\cN-$tangle $\al$} is a $1-$dimensional framed properly embedded submanifold of $M$, whose boundary, $\partial \al$, is contained in the markings $\cN$, and additionally at each boundary point the framing is in the positive direction of $\cN$ and a state from $\{\pm\}$ is assigned. Then the stated skein module of $(M,\cN)$, denoted $\mathscr{S}(M,\cN)$ is the quotient of the free module spanned by isotopy classes of stated $\cN-$tangles subject to the relations seen in Figure \ref{fig:stated-relations}. See  Subsection \ref{SkeinModule} for a detailed explanation. When $\MN$ is the thickening of a marked surface, the stated skein module has a natural algebra structure, and was introduced in \cite{Le4}.

\subsection{The splitting homomorphism for $3-$manifolds} Paralleling the case of surfaces, we show that there is a splitting homomorphism of stated skein modules for $3-$manifolds which are split along a properly embedded disk.

\def\OSL{\mathcal{O}_{q^2}(SL(2))}

Suppose $D$ is a properly embedded disk in $M$ that is disjoint from the closure of the markings $\cN$. Additionally, let $a\subset D$ be an oriented open  interval. By splitting $M$ along $D$ we have a $3-$manifold, $M'$, whose boundary contains two copies of $D$, and hence two copies of $a$, denoted by $a_1$ and $a_2$. Let $\cN'= \cN \cup a_1 \cup a_2$ and consider $(M', \cN')$ as a marked $3-$manifold.
 If an $\cN-$tangle, $\al$,  meets $D$ transversely in $a$, such that the framing at every point in $\al \cap a$ is a positive tangent vector of $a$, then given a state at each point of $\al\cap a$ we call the resulting splitted $\cN'$-arc a lift of $\al$.    
\begin{theorem}[Splitting Homomorphism, see Theorem \ref{SplittingHomo}]
If $M'$ is a result of cutting $M$ along a properly embedded disc then there is a well defined 
$\CR$-linear map 
\[\Theta:\mathscr{S}(M,\cN)\rightarrow\mathscr{S}(M',\cN')\]
that sends any $\cN$-tangle $\alpha$ to the sum of all lifts of $\alpha$. 
\end{theorem}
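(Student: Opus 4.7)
The plan is to define $\Theta$ first on a restricted class of $\cN$-tangle representatives in ``good position'' with respect to $D$, then verify the definition is independent of that choice, and finally check that the defining relations of $\stateS(M,\cN)$ descend.

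First, call an $\cN$-tangle $\al$ \emph{$D$-transverse} if $\al$ meets $D$ transversely in finitely many points, all of them lying on the open arc $a$, with the framing at each such point equal to a positive tangent vector of $a$. A standard general position argument shows that every $\cN$-tangle is isotopic to a $D$-transverse one. For such $\al$, cutting along $D$ yields a framed arc $\al_0 \subset M'$ whose boundary sits in $\cN \cup a_1 \cup a_2$; for each function $s\colon \al \cap a \to \{+,-\}$, assign state $s(p)$ to both boundary points of $\al_0$ lying over $p$, producing a stated $\cN'$-tangle $\al_0^{(s)}$, and set
\[
\Theta(\al) \;=\; \sum_{s \colon \al\cap a \to \{+,-\}} \al_0^{(s)}.
\]
Extend $\CR$-linearly to the free module on all representatives.

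Next, isotopy invariance. Two $D$-transverse representatives of the same isotopy class can be connected by a generic one-parameter family in which the interaction with $D$ is controlled by finitely many local events: (i) an isotopy supported in the complement of $D$, (ii) a \emph{finger move} producing or removing a pair of adjacent intersection points with $a$ that have opposite transverse direction, and (iii) a reordering of two intersections lying consecutive along a strand. Event (i) is compatible with $\Theta$ term by term, and event (iii) merely permutes the state sum. For event (ii) one obtains in $M'$ a small framed arc with both new endpoints close to each other on the same copy $a_i$; summing over the two states at those endpoints and applying the trivial arc relations (c) and (d) of Figure \ref{fig:stated-relations} collapses the configuration to its original form, the ``diagonal'' states $(+,+)$ and $(-,-)$ being killed while the surviving $(+,-)$ and $(-,+)$ re-glue with the matched states on the other copy to recover the original strand. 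This shows $\Theta$ is well-defined on isotopy classes.

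Finally, each of the defining relations (a)--(d) is supported in a small 3-ball, and by the isotopy invariance just established we may assume the supporting ball is disjoint from $D$; then the relation takes place entirely on one side of $D$ and is preserved tautologically in $\stateS(M',\cN')$. The state exchange relation (e) is supported in a neighborhood of a point of $\cN$, which is disjoint from the closure of $D$ by hypothesis, so the same reduction applies. The principal obstacle is the finger-move computation in event (ii): the framing-positivity requirement at every point of $\al \cap a$ is exactly what ensures that the small arc produced on the $M_+$- or $M_-$-side of the cut has endpoints with the framings and state orderings demanded by the trivial arc relations. Carefully tracking the orientation of $a$ and the matching of states between the two copies $a_1$, $a_2$ is the most delicate bookkeeping step; everything else reduces to general position and locality.
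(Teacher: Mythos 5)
Your overall architecture --- define $\Theta$ on $(D,a)$-transverse representatives by the state sum, establish isotopy invariance by decomposing a generic isotopy into local events near $D$ and events away from $D$, and then push the supports of the defining relations off $D$ --- is the same as the paper's. The paper, however, does not verify the local events from scratch: it takes a collared neighborhood $U$ of $D$, standardizes it as $(-1,1)^3$ with $D=\overline{\{0\}\times(-1,1)^2}$, and observes that the finitely many local moves (tangency to $D$, crossings coinciding with $D$, height exchanges) are exactly the ones already checked diagrammatically in the proof of \cite[Theorem 3.1]{Le4}. Your attempt to check them directly is where the gaps are.

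First, the finger move (your event (ii)) does not follow from the trivial arc relations alone. Those relations kill the $(+,+)$ and $(-,-)$ terms and evaluate the two returning arcs on the $a_2$ side to scalars, but what remains on the $a_1$ side in each surviving term is the original strand with a \emph{gap} --- two new consecutive endpoints on $a_1$ carrying states $(+,-)$ or $(-,+)$. You still need the identity that the resulting weighted sum of these two gapped tangles equals the original unbroken strand; this ``reconnection'' identity is a genuine consequence of the defining relations that requires the state exchange relation (E) (equivalently, the fact that the cup and cap coefficient matrices are mutually inverse), and it is asserted, not proved, when you say the surviving terms ``re-glue \dots to recover the original strand.'' Second, your event (iii) is not ``merely a permutation of the state sum'': when two points of $\al\cap a$ exchange their order along $a$ (by leaving $a$ and returning during the ambient isotopy, or because a crossing of the tangle passes through $D$), the lifted tangles acquire different endpoint orderings on $a_1$ and $a_2$, so the two state sums are sums of genuinely non-isotopic stated $\cN'$-tangles; their equality is again a computation with relations (A) and (E). These two verifications carry essentially all the content of the theorem, and they are precisely what the paper outsources to \cite{Le4} via the projection onto $D$; without them your argument is incomplete.
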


Analogously to the case of surfaces, the splitting homomorphism provides stated skein modules with additional structure. For example, as seen in Subsection \ref{sec.bigon}, every connected component of the marking provides $\cS\MN$ a comodule structure over the Hopf algebra $\OSL$ with the coaction given by splitting.

\subsection{The Chebyshev homomorphism for unmarked surfaces}
\label{CHomoIntro}
The Chebyshev homormoprhism, introduced by Bonahon and Wong in \cite{BW1}, has played a crucial role in understanding the algebraic structure of the Kauffman bracket skein algebras of unmarked surfaces. Suppose 
$\Sigma$ is an unmarked surface, and $\omega$ is a root of unity with $N=\ord(\omega^8)$ and $\eta=\omega^{N^2}$. The Chebyshev homomorphism is a an algebra homomorphism 
\[Ch_\omega:\mathscr{S}_\eta(\Sigma)\rightarrow \mathscr{S}_\omega(\Sigma)\]
which sends any knot $\al \subset \Sigma$ to $T_N(\al)$. Here $T_N$ are the 
 Chebyshev polynomials of type one defined recursively by
\begin{align*}
T_0(z)=2, \ \ \ T_1(z)=z, \ \ \ T_n(z) = zT_{n-1}(z)-T_{n-2}(z), \ \ \forall n \geq 2. 
\end{align*}

Two features supporting the ability of this map to be used in deriving algebraic information are
\begin{itemize}
    \item $\mathscr{S}_\eta(\Sigma)$ is interpreted classically in terms of regular functions on the character variety of $\Sigma$
    \item The image of $Ch_\omega$ is central, or almost central
\end{itemize}
both of which were instrumental in understanding the representations of the skein algebra $\cS_\omega(\Sigma)$, see \cite{BW2,BW3,FKL1,FKL2}. 

The original proof of the existence of the Chebyshev homomorphism given in \cite{BW1} is based on the quantum trace map. A more elementary, skein theoretic proof is given in \cite{Le2}.

\subsection{The Chebyshev-Frobenius homomorphism for marked 3-manifolds}
\label{CFHomoIntro}
The main result of the paper, which is stated briefly here and given in full detail in Theorem \ref{main}, shows that the Chebyshev homomorphism can be extended to the case of marked 3-manifold. Suppose 
$\MN$ is a marked 3-manifold, and $\omega$ is a root of unity with $N=\ord(\omega^8)$ and $\eta=\omega^{N^2}$.

For a one-component $\cN$-tangle $\al$ let $\al^{(k)}$ be the disjoint union of $k$ parallel copies of $\al$ (taken in the direction of the framing). If $p= \sum_k x^k$ is a polynomial, define 
$$p^{\fr} (\al) = \sum c_k \al^{(k)},$$
which is considered as an element in $\cS_\omega\MN$.  
\begin{theorem}[Main Result, see Theorem \ref{main}]
Let $(M,\cN)$ be a marked $3-$manifold and let $\omega$ be a root of unity with $N=\ord(\omega^8)$ and $\eta=\omega^{N^2}$, then we have a unique well-defined $\mathbb{C}$-linear map 
\[\Phi_\omega: \mathscr{S}_\eta(M,\cN)\rightarrow \mathscr{S}_\omega(M,\cN),\]
such that for an $\cN$-tangle $T = a_1 \cup \cdots \cup a_k \cup \al_1 \cup \cdots \cup \al_l$ where the $a_i$ are $\cN$-arcs and the $\al_i$ are $\cN$-knots,
\begin{align}
\Phi_\omega (T) & = a_1^{(N)} \cup \cdots \cup a_k^{(N)} \cup (T_N)^{\fr}(\al_1) \cup \cdots \cup (T_N)^{\fr}(\al_l) \quad \text{in }\Sx\MN
\end{align}

Additionally, $\Phi_\omega$ is compatible with the splitting homomorphism, in the sense that they may be applied in either order.
\end{theorem}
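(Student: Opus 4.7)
The plan is to define $\Phi_\omega$ on the free $\BC$-module spanned by stated $\cN$-tangle representatives by the explicit formula in the statement (replace each $\cN$-knot $\al_j$ by $(T_N)^{\fr}(\al_j)$ and each $\cN$-arc $a_i$ by its $N$-fold parallel copy $a_i^{(N)}$, with the state on every parallel copy inherited from the original endpoint), and then verify that this assignment descends through each of the five defining relations (A)--(E) of Figure \ref{fig:stated-relations}. Framed-isotopy invariance of the assignment is automatic, since both $N$-fold parallel copies and $(T_N)^{\fr}$ depend only on the framed isotopy class of a component. The central tool for localizing the well-definedness check is the splitting homomorphism $\Theta$ of Theorem \ref{SplittingHomo}.

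First I would dispose of the relations supported away from the markings, namely the Kauffman skein relation (A) and the trivial knot relation (B) applied to $\cN$-knot components. By splitting off a small ball around the local configuration, the check reduces to the classical statement that $T_N$ intertwines the Kauffman bracket skein relations at $\eta$ with those at $\omega$. This is the Bonahon--Wong Chebyshev identity, reproven in a purely skein-theoretic fashion in \cite{Le2}, and that local ball argument transplants directly to a ball in $M$.

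The arc relations (C), (D) and the state-exchange relation (E) are the main obstacle, and the strategy I would take is to use $\Theta$ to cut out a thickened bigon around the endpoint configuration where each relation takes place. By \cite{CL} the stated skein algebra of the thickened bigon is isomorphic to $\OSL$; under this isomorphism the operation of taking $N$ parallel copies of a stated $\cN$-arc corresponds to the dual Frobenius homomorphism $\OSL\big|_\eta \to \OSL\big|_\omega$, i.e.\ the Hopf dual of Lusztig's Frobenius. The existence of this dual Frobenius is a known quantum-group fact, and it is \emph{equivalent} to the assertion that the three arc-related relations are respected by $\Phi_\omega$ inside the bigon. Splitting then globalizes the conclusion.

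Finally, compatibility with $\Theta$ is a bookkeeping argument that is essentially forced by the construction. For a tangle $T$ meeting a splitting disk $D$ in $m$ points, $\Theta(T)$ is a state-sum of size $2^m$, while $\Theta(\Phi_\omega(T))$ is a priori a state-sum of size $2^{mN}$ because $\Phi_\omega$ has already multiplied the intersections by $N$. The identity $\Theta \circ \Phi_\omega = \Phi_\omega \circ \Theta$ amounts to collapsing the larger sum to the smaller, and this collapse is once more the dual-Frobenius identity on the bigon: summed over states at the $mN$ intersection points, only the ``diagonal'' contributions survive, returning the $N$-fold parallel of each term of $\Theta(T)$. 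Uniqueness is clear since $\cN$-tangles generate $\mathscr{S}_\eta(M,\cN)$. The principal obstacle is the state-exchange relation (E): a direct $N^2$-term verification is forbidding, and the entire proof hinges on the bigon reduction together with the prior existence of the Hopf-dual Frobenius on $\OSL$.
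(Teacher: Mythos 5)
Your overall architecture (define the map on the free $\BC$-module of actual tangle representatives, verify the five defining relations, localize with functoriality and the splitting map) matches the paper's, and your treatment of part (b) --- reducing to the one-intersection-point case where the collapses $(\al_++\al_-)^N=\al_+^N+\al_-^N$ and $T_N(a+d)=a^N+d^N$ in $\OSL$ do the work --- is essentially the paper's Lemma \ref{r.comm}. But there is a genuine gap in your verification of the skein relation (A): you dispose of it only ``applied to $\cN$-knot components'' via the classical Bonahon--Wong identity, and your list of remaining obstacles contains only the boundary relations (C), (D), (E). The case you have skipped --- a crossing in which one or both strands belong to $\cN$-\emph{arc} components --- is the technical heart of the paper's proof. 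Its local model is not a ball and not the bigon but the ideal square $\SS$ (or the punctured bigon, or the marked annulus), and what must be shown is $\tP_\omega(X)=\eta^2\,\tP_\omega(X_+)+\eta^{-2}\,\tP_\omega(X_-)$, i.e.\ that the $N$-parallel of a crossing of two arcs resolves with coefficients $\eta^{\pm2}$ rather than $\omega^{\pm2}$. The paper proves this (Lemma \ref{Square}) by an explicit induction giving $X^{(m)}=\sum_j q^{m^2-4mj+2j^2}{m\brack j}_{q^4}X_+^{(m-j)}X_-^{(j)}$ and then invoking the vanishing of $q$-binomials at a root of unity; the argument depends crucially on the fact that in the square with equal states on each boundary edge every returning arc is zero. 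Neither the closed-curve Chebyshev identity nor the existence of the dual Frobenius on $\OSL$ gives you this: the dual Frobenius only prescribes the images of the four generators of the bigon algebra, whereas $X$ lives on a four-edged surface and is not expressed in such generators until the crossing has been resolved --- which is exactly the consistency you are trying to establish. Reducing the square to bigons by splitting would require compatibility of the not-yet-well-defined map with $\Theta$, which is circular unless handled at the level of tangle representatives as the paper does.

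A secondary point: you single out the state-exchange relation (E) as the principal obstacle, but in the paper it is the easiest boundary check --- the $N$-parallel of the configuration has exactly $N^2$ crossings and each application of (E) contributes $\omega^{2}$, giving $(\omega^2)^{N^2}=\eta^2$ at once; relations (C) and (D) are likewise short height-exchange computations. Importing Lusztig's dual Frobenius as a black box for the boundary relations is workable in spirit (the paper establishes the correspondence in the opposite direction in Section \ref{QGroupSect}), but as written you have not explained how the framed power $\al^{(N)}$ is matched with the algebra power of the $\OSL$ generators --- they differ by powers of $\omega$ when the two endpoints share a marking, cf.\ the proof of Theorem \ref{FrobeniusRestrict} --- and in any case this does not repair the missing case of relation (A).
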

This generalizes the existence of the Chebyshev homomorphism of Bonahon and Wong to stated skein modules of $3-$manifolds by dealing with links in the same manner, and sending $\cN-$arcs to their framed power.  This generalizes the work of the second author and Paprocki in the case of the positive submodule of $\cS\MN$ in \cite{LP}. However, the introduction of mixed states gives rise to considerably different defining relations making the work independent. For comments on known partial results and related results, \cite{KQ,GJS,BR}, see Remark \ref{PrevKnown}.  In particular in the case of the stated skein algebras of surfaces, the work of Korinman and Quesney \cite{KQ}, provides an independent proof of our Corollary \ref{r.surfacePhi} when the order of $\omega$ is odd.

As an application, in  Theorem \ref{r.transparent} we  find transparent elements of stated skein modules, and  central elements of stated skein algebras of surfaces.

\subsection{Quantum tori and the quantum trace map}
In Section \ref{QTorusSect} a conceptual framework based on quantum tori is described for the Chebyshev-Frobenius homomorphism in the case of the stated skein algebras of surfaces.  

Given an anti-symmetric matrix $P$ with integer entries and a non-zero complex number $\omega$, the quantum torus $\mathbb{T}(P;\omega)$ is defined as $\BC\langle \{x_i^{\pm 1}\}: x_ix_j=\omega^{P_{ij}}x_jx_i\rangle$.  Given any integer $N$,  there is an algebra map,  known as the Frobenius homomorphism:  
\be F_N:\mathbb{T}(P;\omega^{N^2})\rightarrow \mathbb{T}(P; \omega),\quad x_i \to x_i^N.
\label{eq.Fro}
\ee 

Suppose the marked surface $\SM$ has at least one marked point and has a {\em quasitriangulation $\cE$}, see Section \ref{QTorusSect}. A recent result of the second author and Yu, see \cite[Theorem 5.1]{LY}, shows that for any non-zero complex number $\omega$ there exists an algebra embedding 
\be
\mathscr{S}_\omega(\Sigma,\cP)\embed \mathbb{T}(P_\cE;\omega), 
\label{eq.emb1}
\ee
where $P_\cE$ is an anti-symmetric matrix depending on the quasitriangulation.
This map, which extends the work of Muller \cite{Muller}, is similar to the quantum trace map of Bonahon and Wong, but different, as the classical specialization expresses the trace of a closed curve in terms of Penner's lambda length coordinates \cite{Penner}.

Combining with the Frobenius homomorphism $F_N$ of \eqref{eq.Fro}, we have the following diagram
\be
\begin{tikzcd}
\mathscr{S}_{\omega^{N^2}}(\Sigma,\cP)\arrow[r,hook]
\arrow[d,dashed,"?"]  
&  \mathbb{T}(P_\cE;\omega^{N^2})\arrow[d,"F_N"] \\
\mathscr{S}_{\omega}(\Sigma,\cP)\arrow[r,hook] & \mathbb{T}(P_\cE;\omega)
\end{tikzcd}
\ee
This leads to the natural question of for which values of $\omega$ the map $F_N$ restrics to a map  \[\mathscr{S}_{\omega^{N^2}}(\Sigma,\cP) \to 
\mathscr{S}_{\omega}(\Sigma,\cP),\] such that the restriction does not depend on the underlying quasitriangulation.
\begin{theorem}[See Theorem \ref{FrobeniusRestrict}] Suppose $\SM$ has at least two quasitriangulations. Then $F_N$ restricts to a map $\mathscr{S}_{\omega^{N^2}}(\fS) \to 
\mathscr{S}_{\omega}(\fS)$ and the restriction does not depend on the quasitriangulation if and only if $\omega$ is a root of unity and $N= \ord(\omega^8)$. Moreover, in this case, the restriction map is $\Phi_\omega$.
\end{theorem}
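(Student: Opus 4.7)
I would prove the biconditional in two directions, with the ``moreover'' clause following from the sufficiency analysis. For the sufficient direction, assume $\omega$ is a root of unity with $N=\ord(\omega^8)$. The main theorem supplies $\Phi_\omega\colon\mathscr{S}_{\omega^{N^2}}\SM\to\mathscr{S}_\omega\SM$, an intrinsic map, hence automatically quasitriangulation-independent. The plan is to verify that for every quasitriangulation $\cE$ the Le-Yu embedding \eqref{eq.emb1} intertwines $\Phi_\omega$ with $F_N$, i.e., $\iota_\cE\circ\Phi_\omega=F_N\circ\iota_\cE$. The key observation is that each edge of $\cE$ determines a canonical stated arc whose image in $\mathbb{T}(P_\cE;\omega)$ is, up to a monomial normalization, the generator $x_e$, while $\Phi_\omega$ sends such an arc to its $N$-fold framed parallel, whose image is $x_e^N=F_N(x_e)$. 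Since the subalgebra of $\mathbb{T}(P_\cE;\omega)$ generated by these arc images absorbs the full image of the skein algebra, the intertwining extends from generators to all of $\mathscr{S}_{\omega^{N^2}}\SM$. This simultaneously shows that $F_N$ restricts to the skein algebra and that the restriction is $\Phi_\omega$.

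For necessity, suppose $F_N$ restricts to a triangulation-independent map on the stated skein algebra. Applied to an edge arc $a$ of a quasitriangulation $\cE$, the restriction must send $a$ to the framed $N$-th parallel $a^{(N)}$, since the embedding images satisfy $\iota_\cE(a^{(N)})=x_a^N=F_N(x_a)$. Requiring $F_N$ to respect the multiplication on $\mathscr{S}_{\omega^{N^2}}$ forces the exchange scalars $\omega^{N^2 P_{ee'}}$ to match $\omega^{P_{ee'}}$ after raising to $N$-th powers; tested on a pair of crossing edge arcs, this collapses to $\omega^{8}$ having finite order dividing $N$, so $\omega$ is a root of unity. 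The hypothesis of a second quasitriangulation $\cE'$ is now crucial: comparing the restriction in the two coordinate systems, triangulation-independence equates $F_N\circ\iota_\cE$ with $F_N\circ\iota_{\cE'}$ through the Le-Yu change of coordinates, which is a non-monomial Laurent expression. The $F_N$-image of such an expression only matches the change-of-coordinates in $\mathbb{T}(P_{\cE'};\omega)$ when $N$ is exactly $\ord(\omega^8)$; any strict multiple introduces incompatible scalar factors in the cross terms.

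The principal obstacle I anticipate is this last step: making precise how the non-monomial flip coordinate change obstructs restriction for $N$ a strict multiple of $\ord(\omega^8)$. This requires tracking the Chebyshev-type identities satisfied by the ``$N$-th power'' of sums of monomials under a flip, and showing these identities fail beyond the minimal $N$. Once this bookkeeping is established, the ``moreover'' assertion that the restriction coincides with $\Phi_\omega$ is immediate by reapplying the generator-level identification from the sufficient direction and invoking the universal characterization of $\Phi_\omega$ supplied by the main theorem.
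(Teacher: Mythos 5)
Your sufficiency direction matches the paper's: one checks on the generators $X_e$ of the quantum space that $F_N\circ\phi_{\cE,\eta}=\phi_{\cE,\omega}\circ\Phi_\omega$, taking care of the monomial normalizations (the paper's three cases, where $X_e=e$, $X_e=\omega e'$, or $X_e=\omega^{-1}e'$ and the framed power $e^{(N)}$ differs from the algebra power $e^N$ by $\omega^{\pm N(N-1)}$, producing the $\eta^{\pm1}$ factors). That part of your plan is sound and is essentially what the paper does, and the ``moreover'' clause does follow from it.

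The necessity direction, however, has a genuine gap. First, your intermediate claim that ``requiring $F_N$ to respect the multiplication \dots collapses to $\omega^8$ having finite order dividing $N$'' is false: $F_N(x_i)=x_i^N$ is an algebra homomorphism of quantum tori for \emph{every} nonzero $\omega$ and every $N$, since $x_e^Nx_{e'}^N=\omega^{N^2P(e,e')}x_{e'}^Nx_e^N=(\omega^{N^2})^{P(e,e')}x_{e'}^Nx_e^N$ matches the source relation exactly. No root-of-unity condition comes from multiplicativity; the obstruction lives entirely in whether $F_N$ carries the (non-monomial) image of the skein algebra into itself compatibly across quasitriangulations. Second, you correctly identify the flip/change-of-coordinates analysis as the crux, but you explicitly leave it as ``the principal obstacle I anticipate'' without carrying it out --- and this is precisely the hard part of the theorem. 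The paper does not attempt this analysis afresh: it restricts the whole diagram to the positive-state submodule $\cS^+\SM$ and its quantum torus $\cX_\omega(\Sigma;\cE^+)$, where the necessity statement is exactly Theorem 8.2 of \cite{LP}, and imports the conclusion from there. If you want a self-contained argument you would need to reproduce the Lê--Paprocki flip computation (tracking how $T_N$-type identities for sums of $q$-commuting monomials fail when $N\neq\ord(\omega^8)$); as written, your proposal asserts the conclusion of that computation rather than proving it.
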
  This is proven by leveraging the corresponding statement proven in the posititive submodule case in \cite{LP}.  The theorem gives a perspective in which the Chebyshev-Frobenius map is unique, and at the same time highlights the role of roots of unity and the order $\ord(\omega^8)$.

\def\fB{\mathfrak B}
\subsection{Quantum group  and Lusztig's Frobenius homomorphism}
In Section \ref{QGroupSect} a conceptual framework based on quantum groups is described for the Chebyshev-Frobenius homomorphism in the case of the stated skein algebras of surfaces. 

We first turn to the stated skein algebra of the bigon and its isomorphism to $\mathcal{O}_{q^2}(SL(2))$, which is the Hopf dual of Lusztig's quantum group of divided powers $U^L_{q^2}(\mathfrak{sl}_2)$.  
In this case we see that the Chebyshev-Frobenius homormorphism $\Phi_\omega$ is the dual of Lusztig's Frobenius homomorphism from the study of quantum groups specialized at roots of unity.
\begin{theorem}[See Theorem \ref{uniqueness}] Suppose $\fB$ is the bigon.  
The Chebyshev-Frobenius homomorphism $\Phi_\omega: \cS_\eta(\fB) \to \cS_\omega(\fB)$ is the Hopf dual of Lusztig's
Frobenius homomorphism.
\end{theorem}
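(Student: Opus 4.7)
The plan is to work inside the bigon and reduce the theorem to agreement on four generators. Via the isomorphism of \cite{CL}, we identify $\cS_\eta(\fB)\cong\mathcal{O}_{\eta^2}(SL(2))$ and $\cS_\omega(\fB)\cong\mathcal{O}_{\omega^2}(SL(2))$ as Hopf algebras, so that the four basic stated arcs $\alpha_{\epsilon_1\epsilon_2}$ correspond to the matrix coefficient generators $a,b,c,d$ and the bigon coproduct (given by the splitting homomorphism, Section \ref{sec.bigon}) is the standard quantum matrix coproduct. Under this dictionary the theorem becomes the identification of $\Phi_\omega$ with the Hopf dual $Fr^{\vee}\colon\mathcal{O}_{\eta^2}(SL(2))\to\mathcal{O}_{\omega^2}(SL(2))$ of Lusztig's quantum Frobenius $Fr\colon U^L_{\omega^2}(\mathfrak{sl}_2)\to U^L_{\eta^2}(\mathfrak{sl}_2)$.

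Next, I would compute each side on the four generators. On the skein side, the defining formula for $\Phi_\omega$ on an $\cN$-arc gives $\Phi_\omega(\alpha_{\epsilon_1\epsilon_2})=\alpha_{\epsilon_1\epsilon_2}^{(N)}$. In the bigon the stacking product is realized geometrically by stacking parallel copies, so the framed $N$-th power is literally the algebraic $N$-th power in $\cS_\omega(\fB)$; hence $\Phi_\omega(a)=a^N$, and similarly for $b,c,d$. On the quantum group side, Lusztig's Frobenius sends divided powers $E^{(k)}\mapsto E^{(k/N)}$ when $N\mid k$ (and zero otherwise), with the analogous rule for $F^{(k)}$ and with $K^{\pm 1}\mapsto 1$. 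The pullback along $Fr$ of the two-dimensional fundamental $U^L_{\eta^2}(\mathfrak{sl}_2)$-representation is the Frobenius twist of the two-dimensional $U^L_{\omega^2}(\mathfrak{sl}_2)$-representation, whose matrix coefficients are exactly $a^N,b^N,c^N,d^N$ inside $\mathcal{O}_{\omega^2}(SL(2))$; equivalently, $Fr^{\vee}(x)=x^N$ for each $x\in\{a,b,c,d\}$.

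Combining these two computations, $\Phi_\omega$ and $Fr^{\vee}$ are algebra homomorphisms from $\mathcal{O}_{\eta^2}(SL(2))$ to $\mathcal{O}_{\omega^2}(SL(2))$ that agree on the generators $a,b,c,d$. Since $\cS_\eta(\fB)$ is generated as an algebra by these four elements, and Corollary \ref{r.surfacePhi} ensures $\Phi_\omega$ is an algebra map in the surface setting, the two maps coincide as algebra homomorphisms; the Hopf-algebra property of $\Phi_\omega$ then follows automatically from its equality with the Hopf map $Fr^{\vee}$, yielding the theorem.

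The principal delicacy is conventional rather than computational. One must pin down the normalizations for the \cite{CL} identification and for Lusztig's Frobenius at a root of unity $\omega^2$ whose image $\eta^2$ is itself a nontrivial root of unity, rather than the classical value $1$ as in Lusztig's original setting; in particular the Frobenius must be taken in its quantum-to-quantum form, and the framed-power convention for $\alpha^{(N)}$ must be the one that is compatible with the bigon's stacking product in the chosen framing direction. Once these conventions are fixed the comparison is immediate because both maps reduce to $N$-th-powering on the four matrix coefficient generators.
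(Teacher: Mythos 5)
Your overall strategy is the same as the paper's: identify $\cS(\fB)$ with the quantum coordinate ring via \cite{CL}, observe that $\Phi_\omega$ is $x\mapsto x^N$ on the four generators $a,b,c,d$ (correct, since in the bigon each generator has one endpoint on each boundary edge, so the framed power coincides with the stacking power), and then match this against the dual of Lusztig's Frobenius. The difference is in how the duality is verified: the paper (Propositions \ref{bigonHopf} and \ref{HopfDual}, plus Appendix \ref{HopfPairCompute}) computes the Hopf pairings $\langle a^m, K\rangle$, $\langle a^m, E^{(p)}\rangle$, etc., explicitly against all generators $K^{\pm1}, E, F, E^{(N)}, F^{(N)}$ of $U^L_{\omega^4}(\mathfrak{sl}_2)$, whereas you assert that the pullback of the fundamental representation along $Fr$ has matrix coefficients $a^N,b^N,c^N,d^N$. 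That assertion is exactly equivalent to the pairing computations and is the real content of the theorem; as written you have not verified it, so your argument is an outline of the same proof rather than an independent one.

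There is also a concrete error in your description of Lusztig's Frobenius. You state $K^{\pm1}\mapsto 1$, but that is the form whose target is the classical (or $v=1$) enveloping algebra; here the target is $U^L_{\eta^4}(\mathfrak{sl}_2)$ with $\eta=\omega^{N^2}$, and the correct quantum-to-quantum form is $f(K)=(-1)^{N+1}K$, $f(E)=f(F)=0$, $f(E^{(N)})=E$, $f(F^{(N)})=F$. The version with $K\mapsto 1$ is actually incompatible with duality to $x\mapsto x^N$: one would need $\langle a^N,K\rangle_{\omega^4}=\omega^{4N}$ to equal $\epsilon(a)=1$, which fails whenever $\ord(\omega^4)=2N$ (e.g.\ whenever $N$ is even, in which case $\omega^{4N}=-1$). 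The sign $(-1)^{N+1}$ and the identity $(-1)^{N+1}\eta^{4}=\omega^{4N}$ are precisely what make the $K$-pairing consistent; this is not a removable convention but a computation that must be done, and it is where a proof along your lines would break if left as stated.
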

In a sense, this provides a skein theoretic description of Luzstig's Frobenius homomorphism. A discussion of the dual Frobenius map for $\mathcal{O}_{q^2}(SL(n))$ can be found in Section $7$ of \cite{PW}.

In addition, the Hopf algebra $\mathcal{O}_{q^2}(SL(2))$ is co-braided, i.e. has a co-$R$-matrix \cite{Kassel} which allows for its representation theory to be described with a ribbon category, and we show  in Proposition \ref{triangGlue} that our Chebyshev-Frobenius map respects the co-braided structure. In \cite{CL} it was proved that the stated skein algebra of a triangle is the braided tensor product of two copies of the stated skein algebra of the bigon. This shows for an ideal triangle, the Chebyshev-Frobenius map can be built from Lusztig's Frobenius homomorphism via this braided tensor product. For an arbitrary triangulable marked surface, via an ideal triangulation and the splitting homomorphism, we see that our Chebyshev-Frobenius map can be interpreted as a natutral extension of the dual of Lusztig's Frobenius homomorphism from the bigon to the whole surface.

\subsection{Structure of the paper}
We provide a brief summary of the paper:
\begin{enumerate}\setcounter{enumi}{1}
    \item This section defines the stated skein module of marked $3-$manifolds, describes the relationship to stated skein algebras of punctured bordered surfaces, and introduces the key concept of functoriality for stated skein modules.
    \item This section proves the existence of a splitting homomorphism for stated skein modules along properly embedded disks, generalizing the splitting homomorphism of stated skein algebras along ideal arcs.   This endows stated skein modules with the structure of an $\mathcal{O}_{q^2}(SL(2))$ comodule.
    \item This section proves the existence of the Chebyshev-Frobenius homomorphism for stated skein modules, and shows the compatibility of this map with the splitting homomorphism defined in the previous section.   Additionally, an observation is made regarding the center of stated skein algebras utilizing transparency and the image of the Chebyshev-Frobenius homomorphism. 
    \item This section provides a conceptual framework for the Chebyshev-Frobenius homomorphism in terms of the Frobenius homomorphism of quantum tori.  
    \item This section provides a conceptual framework for the Chebyshev-Frobenius homomorphism in terms of the Hopf dual of the Frobenius homomorphism (in the sense of Lusztig) for quantum groups.
\end{enumerate}

\section{Skein modules/algebras} 
\subsection{The ground ring and quantum integers}
\label{qnotation}

Throughout this paper the ground ring $\mathcal{R}$ will be a commutative domain with a distinguished invertible element $q^{\frac{1}{2}}$. When $q$ is a root of 1, the {\em order of $q$}, denoted by $\ord(q)$, is the smallest positive integer $n$ such that $q^n=1$.

We follow the standard definition of quantum integers, quantum factorials, and quantum binomials, but fix notation as follows:
\begin{align*}
    & [n]_q:=\frac{q^{n}-q^{-n}}{q-q^{-1}} \\
    & [n]_q!:=[n]_q\cdot [n-1]_q!\\
    & {n\brack k}_q=\prod_{i=0}^{k-1}\frac{1-q^{n-i}}{1-q^{i+1}}
\end{align*}


The following facts concerning quantum integers are well-known, see e.g. \cite{Kassel}.
\begin{lemma}
\label{qfacts}
(a) One has

\be  {n \brack k}_q=q^{k}{n-1 \brack k}_q+{n-1 \brack k-1}_q
\ee

(b) If  $q$ is a root of unity of order $N$ then
\be 
{N\brack k}_{q}=\begin{cases} 0  &\text{if } \ 0<k<N
 \\
 1 &\text{if }\  k=0, N.
\end{cases}
\ee

(c) If $yx=qxy$ and $q$ is a root of unity of order $N$, then 
\[(x+y)^N=x^N+y^N.\]
\end{lemma}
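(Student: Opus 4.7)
These are classical identities for $q$-binomial coefficients, and my plan is to treat the three parts in order using elementary manipulations. Because everything is standard, I do not anticipate a real obstacle; the only slightly delicate bookkeeping is in (b), where one must interpret ${n\brack k}_q$ as a polynomial before specializing.

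For (a), I would compute directly from the product formula. Extracting the common factor $\prod_{i=1}^{k-1}\frac{1-q^{n-i}}{1-q^{i}}$ from both ${n-1\brack k}_q$ and ${n-1\brack k-1}_q$, the claim reduces to the scalar identity $q^{k}(1-q^{n-k})+(1-q^{k})=1-q^{n}$, which expands immediately.

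For (b), I would first observe that by the recursion in (a) together with the base cases ${n\brack 0}_q={n\brack n}_q=1$, the expression ${n\brack k}_q$ lies in $\mathbb{Z}[q]$, so its specialization at a root of unity is unambiguous (this is what rescues the apparent $0/0$ in ${N\brack N}_q=1$). For $0<k<N$ the numerator $\prod_{i=0}^{k-1}(1-q^{N-i})$ contains the factor $1-q^{N}=0$, while the denominator $\prod_{i=1}^{k}(1-q^{i})$ is a product of nonzero terms (each exponent $i$ satisfies $1\le i<N$, and $q$ has order exactly $N$), so the value is $0$. The case $k=0$ is trivial, and $k=N$ follows from the polynomial identity just mentioned.

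For (c), the key step is the quantum binomial theorem: if $yx=qxy$, then $(x+y)^{n}=\sum_{k=0}^{n}{n\brack k}_q\,x^{k}y^{n-k}$. I would prove this by induction on $n$, writing $(x+y)^{n}=(x+y)(x+y)^{n-1}$ and using $y\cdot x^{k}=q^{k}x^{k}y$ to move the new copy of $y$ past the monomials of the inductive hypothesis; recollecting the coefficient of $x^{k}y^{n-k}$ produces exactly ${n-1\brack k-1}_q+q^{k}{n-1\brack k}_q$, which equals ${n\brack k}_q$ by (a). Specializing at $n=N$ and invoking (b) collapses the sum to its two endpoints, yielding $(x+y)^{N}=x^{N}+y^{N}$.
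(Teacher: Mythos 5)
Your proof is correct: the common-factor computation for (a), the polynomial interpretation that resolves the apparent $0/0$ in ${N\brack N}_q$ for (b), and the induction establishing the $q$-binomial theorem $(x+y)^n=\sum_k {n\brack k}_q x^k y^{n-k}$ for $yx=qxy$ in (c) are all sound and match the paper's conventions. The paper itself gives no proof of this lemma, stating it as well-known and citing Kassel, so your argument simply supplies the standard derivation that the authors omit.
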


\subsection{Skein modules of marked 3-manifolds}
\label{SkeinModule}

By a {\em marked 3-manifold} we mean a pair $(M,\cN)$, where $M$ is an oriented  3-manifold with (possibly empty) boundary $\partial M$ and $\cN$ is a $1-$dimensional oriented submanifold of $\partial M$ such that each connected component of $\cN$ is diffeomorphic to the  interval $(-1,1)$, the closure of each component is diffeomorphic to $[-1,1]$, and the closures of the connected components of $\cN$ are disjoint.

By an {\em $\cN$-tangle in $M$} we mean a compact 1-dimensional non-oriented submanifold $\al$ of $M$, equipped with a framing,  such that $\partial \al=\al\cap \cN$.
Where a {\em framing} is a continuous assignment of a vector to each point of $\al$, which is not tangent at that point, and such that at each boundary point  the framing  is a positive tangent vector of $\cN$. Two $\cN$-tangles are {\em $\cN$-isotopic} if they are isotopic in the class of $\cN$-tangles. The empty set is considered as a $\cN$-tangle isotopic only to itself. A 1-component $\cN$-tangle $\al$ is  diffeomorphic to either the circle $S^1$ or  the closed interval $[0,1]$; we call $\al$ an {\em $\cN$-knot} in the first case and an {\em $\cN$-arc} in the second case.

A {\em stated $\cN$-tangle} $\al$ is a $\cN$-tangle equipped with a  map $s : \partial \al \to \{\pm \}$, called the state of $\al$.   

The {\em stated skein module} $\cS\MN$ 
 is the $\mathcal{R}$-module freely spanned by all stated $\cN$-tangles modulo $\cN$-isotopy and the {\em defining} relations described in Figure \ref{fig:stated-relations}. These relations are understood as follows.
In each of the figures the dashed outline indicates a ball, $B \subset M$, containing part of a stated tangle diagram. A relation involving, say 3 terms, indicates that whenever we have 3 stated tangles which are identical outside the ball (corresponding to the dashed part of each term), then we have a relation on stated tangles as described. In each term the $\cN$-tangle is described by strands, with standard tangle diagram convention. The framing vector is always perpendicular to the page and pointed at the reader. The ball $B$ will be called the {\em support of the relation}.
 
Relations $(A)$ and $(B)$ do not involve the boundary of $M$ and are standard in skein module theory. The first and the second diagrams on the right hand side of $(A)$ are called respectively the positive and the negative resolutions of the diagram on the left hand side.

 Each of  Relations $(C), (D),$ and $(E)$, involve $\partial M$ and the marking set $\cN$. Here $B\cap \cN$ is a small interval perperdicular to the page at the point marked by the bullet on the boundary, and its positive direction is pointing at the reader. When two strands come to the marked point, the lower (in height order) one is depicted by the broken line. The states of the endpoints of the tangle are indicated there.

\begin{remark}
\label{alternative}
Introducing a crossing through an isotopy in Relation $(E)$, it is easy to see that
in the presence of Relations $(A)-(D)$,  the state exchange relation $(E)$ is equivalent to the relation  in Figure \ref{fig:StateExchangeAlternate}.
\begin{figure}[htpb]
  \centering
    \includegraphics[scale=.35]{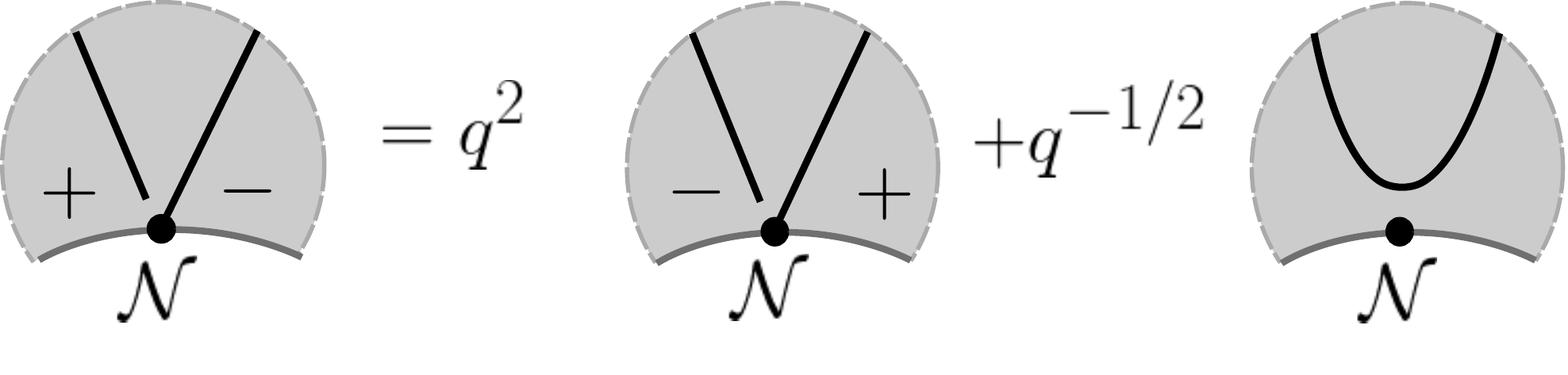}
    \caption{A relation equivalent to Relation $(E)$ using Relations $(A)$ and $(C)$} 
    \label{fig:StateExchangeAlternate}
\end{figure}
\end{remark}

\def\cC{\mathcal C}
\def\cR{\mathcal R}

We use the notation $[\al]$ to denote the element of $\cS\MN$ determined by the stated $\cN$-tangle $\al$. However, we will often abuse notation and use $\al$ and $[\al]$ interchangably when there is no chance for confusion.

\subsection{Functoriality/the category of marked $3-$manifolds}
\label{functoriality} Let $\cC$ be the category whose objects are marked 3-manifolds, and a morphism from 
a  marked 3-manifold $\MN$ to a marked 3-manifold $(M', \cN')$ is an isotopy class of embeddings $f:\MN \embed (M', \cN')$. Here {\em an embedding} $f:\MN \embed (M', \cN')$  is an orientation preserving   proper embedding $f: M \embed M'$ such that  $f$ restricts to an orientation preserving embedding on $\cN$. 
Such an embedding  induces  an $\mathcal{R}$-module homomorphism $f_*: \cS\MN \to \cS\MNp$ by $f_*([\al])=[f(\al)]$ for any stated $\cN$-tangle $\al$.

We have a natural isomorphism of $\cR$-modules
\be 
\mathscr{S}(M_1\sqcup M_2, \cN_1 \sqcup \cN_2)\cong \mathscr{S}(M_1,\cN_1)\otimes_\mathcal{R}\mathscr{S}(M_2,\cN_2).
\ee

In other words, the assignment $\MN\to \cS\MN$ and a morphism $f$ to $f_*$ is a functor from $\cC$ to the category of $\cR$-module, and if we define a monoidal structure on $\cC$ by 
 $\MN\ot (M', \cN')=(M\sqcup M, \cN \sqcup \cN')$, then this is a monoidal functor.

\begin{example} \label{exa.1}
One particular case of functoriality will be used frequently. Let $\MN$ be a marked 3-manifold and $X$ be a closed subset of $\pM$ disjoint from $\cN$. Define $M'= M \setminus X$. We will say that the marked $3-$manifold $(M',\cN)$ is {\em pseudo-isomorphic} to $(M,\cN)$. The natural embedding $\iota: (M', \cN) \embed \MN$ induces an isomorphism of $\cR$-modules $\iota_*: \cS(M',\cN) \cong \cS\MN$.
\end{example}



\def\tal{\tilde \al}

\subsection{Marked surfaces} 
A {\em finite type surface} is a surface homeomorphic to a surface obtained by removing a finite number of points from a  compact oriented 2-manifold with (possibly empty) boundary.
By a {\em marked surface} we mean a pair $\SM$, where 
$\Sigma$ is a finite type surface and 
 $\cP$, called the set of marked points, is a finite subset of the boundary $\pS$.  

\def\tSM{(\widetilde \Sigma, \widetilde {\cP}) }
For a marked surface $\SM$, the {\em thickening of $\SM$} is the marked 3-manifold $\tSM$ where  
$\widetilde{\Sigma} = \Sigma \times (-1,1)$ and $\widetilde {\cP} =\mathcal{P}\times(-1,1)$. 
 
 Define $\cS\SM= \cS\tSM$.  Given two stated $(\mathcal{P}\times(-1,1))$-tangles $\alpha, \al'$ define the product $\al \al'$ by stacking $\al$ above $\al'$. This gives $\cS\SM$ an $\mathcal{R}$-algebra structure.
 


\def\pfS{\partial \fS}
\def\tpfS{\widetilde{ \pfS}}
\subsection{Equivalence with the definition for punctured bordered  surfaces.}
\label{sec.equiv}

We will show that the marked surface definition of a stated skein algebra is equivalent to the original definition given in \cite{Le4} utilizing punctured bordered surfaces.

A {\em  punctured bordered surface}, $\fS$,
 is  a finite type surface whose boundary is the disjoint union of open intervals.

To a marked surface $\SM$, with $\mathcal{P}=\{p_i\}_{i=1}^k$, one can associate a punctured bordered surface $\fS$ as follows. 
 Let $v_i \subset \pS$ be a small open neighborhood of $p_i$ in $\partial \Sigma$. 
Now let $\mathfrak{S}:=(\Sigma\setminus \partial \Sigma)\cup  (\sqcup _{i=1}^k v_i)$. Then  $\partial \mathfrak{S}=\sqcup _{i=1}^k v_i$ and each $v_i$ is called a {\em boundary edge} of $\fS$. 


\def\tfS{\widetilde {\fS}}
\def\tSigma{\widetilde {\Sigma}}
\def\ptfS{\partial \tfS}

The manifold $\tfS:= \fS \times (-1,1)$ is a submanifold of $\tSigma= \Sigma \times (-1,1)$.
We call $c\times (-1,1)$, where $c$ is a boundary edge of $\fS$, is a {\em boundary wall} of $\tfS$. The boundary $\tpfS$ of $\tfS$ is the disjoint union of all the boundary walls.

By a {\em $\tpfS$-tangle} $\al$ in $\tfS$ we mean a framed 1-dimensional compact non-oriented submanifold properly embedded in $\tfS$ with {\em vertical framing} at each endpoint and distinct heights for endpoints in each boundary wall. Here a framing vector at a point in $\tfS= \fS\times (-1,1)$ is {\em vertical} if it is tangent to and has the positive direction of the component $(-1,1)$.

Two $\tpfS$-tangles are {\em $\tpfS$-isotopic} if they are isotopic in the class of $\tpfS$-tangles. Note that the endpoints of $\al$ in one boundary wall are linearly ordered by heights since they have distinct heights, and $\tpfS$-isotopy does not change the height order.

One nice feature of $\tpfS$-tangles is that their tangle diagrams on $\fS$ can have distinct boundary points, unlike the case of $\cN$-tangle diagrams.

A {\em $\pfS$-tangle diagram} is a tangle diagram $\al$ on $\fS$ whose endpoints are distinct points in $\pfS$, and on each boundary edge $c$ the set $\partial \al \cap c$ is equipped with a linear ordered. Any $\pfS$-tangle diagram $\al$ defines a $\tpfS$-tangle, unique up to {\em $\tpfS$-isotopies}, if one  equips $\al$ with a vertical framing, and the height order on each boundary wall is the given order. 
Every $\tpfS$-tangle can be represented by a $\pfS$-tangle diagram.

Return to the marked surface $\SM$. Every $\tilde \cP$-tangle in $\Sigma\times (-1,1)$ is automatically a {\em $\tpfS$-tangle}, and 
this gives a bijection of $\tilde \cP$-isotopy classes of $\tilde \cP$-tangles and $\tpfS$-isotopy classes of $\tpfS$-tangles.

\begin{figure}[htpb]
\no{
\begin{subfigure}[b]{0.45\linewidth}
\centering
\includegraphics[scale=.35]{SkeinRelation.pdf}
\subcaption{Skein relation}
\end{subfigure}
\hfill
\begin{subfigure}[b]{0.45\linewidth}
\centering
\includegraphics[scale=.35]{TrivialKnotRelation.pdf}
\subcaption{Trivial knot relation}
\end{subfigure}
}
\begin{subfigure}[b]{0.45\linewidth}
\setcounter{subfigure}{2}
\centering
\includegraphics[scale=.35]{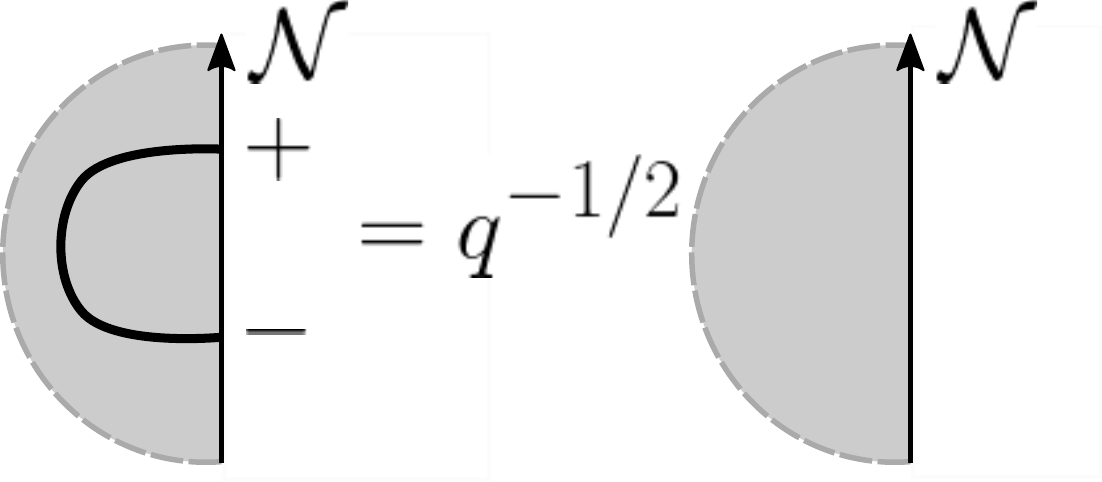}
\subcaption{Trivial arc relation 1}
\end{subfigure}
\hfill
\begin{subfigure}[b]{0.45\linewidth}
\centering
\includegraphics[scale=.35]{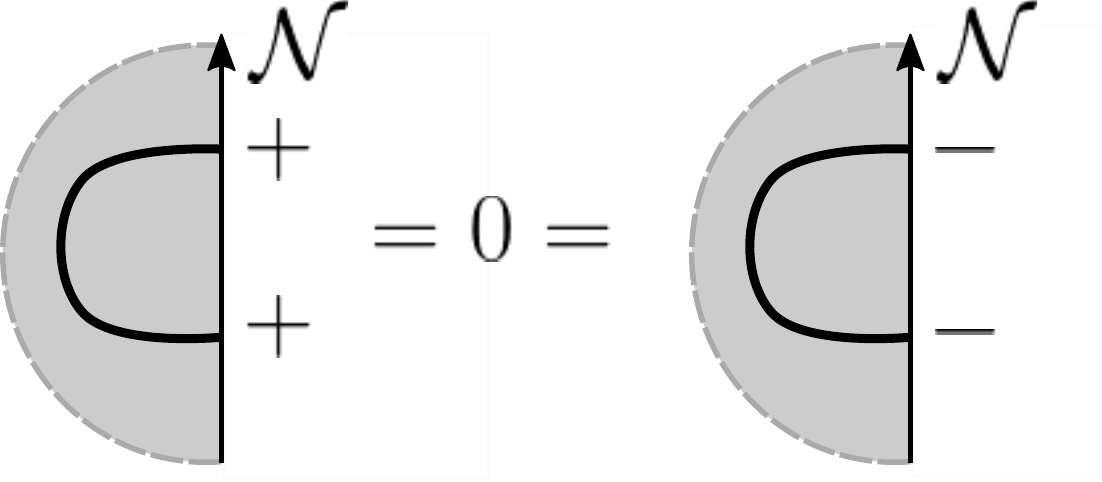}
\subcaption{Trivial arc relation 2}
\end{subfigure}
\begin{subfigure}[b]{\linewidth}
\centering
\includegraphics[scale=.35]{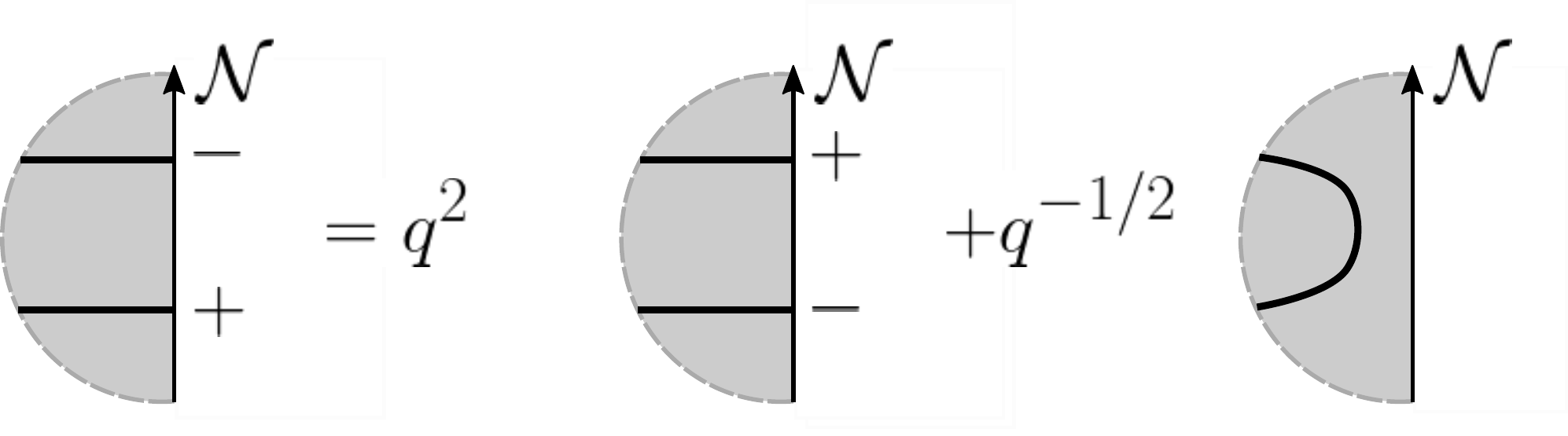}
\subcaption{State exchange relation}
\end{subfigure}
\caption{A translation of the defining relations in $\mathscr{S}(M,\cN)$ to the language of punctured bordered surfaces, using the alternative version of the state exchange relation}
\label{fig:punctstated-relations}
\end{figure}
 
 Hence the skein algebra $\cS\SM$ is canonically isomorphic to the skein algebra $\cS(\fS)$ defined as the $\cR$-module freely generated by $\tpfS$-isotopy classes of stated $\tpfS$-tangles modulo the original relations (A) and (B), and the new relations (C-E) of Figure \ref{fig:punctstated-relations}. We have that relations $(C)$ and $(D)$ of Figure \ref{fig:punctstated-relations} are direct translations of relations $(C)$ and $(D)$ in Figure \ref{fig:stated-relations} and relation $(E)$ of Figure \ref{fig:punctstated-relations} is a translation of the alternative version of relation $(E)$ from \ref{fig:stated-relations} seen in Remark \ref{alternative}.  These figures are understood as follows.  In each of the figures the dashed outline represents a disk, $D\subset \fS$, containing part of a stated $\tpfS-$tangle diagram. The arrow on the boundary edge is used to indicate the height order of the two endpoints presented there, meaning that going along the direction of the arrow increases the height order. These two endpoints are consecutive in the height order, and the order of other endpoints not presented in the figure is not given by the direction of the arrow.
  These diagrams are taken with the blackboard framing meaning they correspond to a piece of $\fS\times(-1,1)$ where the $(-1,1)$ component is perpendicular to the page.  Each relation corresponds to a relation  on stated $\tpfS-$tangles in $\fS\times (-1,1)$ which are identical outside of $D\times (-1,1)$, and which satisfy the expression given inside of $D\times (-1,1)$.

 As mentioned above, $\ptfS$-tangles can be conveniently depicted by $\pfS$-tangle diagrams. This leads us to often formulate statements for punctured bordered surfaces, but these statements can be easily converted to statements for stated skein algebras of marked surfaces.

 \subsection{A basis for the skein algebra of surfaces}
 Let $\fS$ be a punctured bordered surface.
 
  A $\pfS$-tangle diagram is {\em simple} if it has no double points corresponding to crossings and no components which are trivial as defined below.  A $\pfS-$arc is a $1-$component simple stated $\tpfS-$tangle diagram having non-empty boundary.  A $\pfS-$knot is a $1-$component simple stated $\tpfS-$tangle diagram having empty boundary. Note that a $\pfS-$knot is determined by a simple closed curve on $\fS$.   A $\pfS-$knot is said to be trivial if it bounds a disk in $\fS$.  A $\pfS-$arc is said to be trivial if it can be isotoped to a subset of a boundary edge.
  
  Let $B(\fS)$ be the set of all isotopy classes of simple $\pfS-$tangles such that the height of the intersection with the boundary wall increases along the orientation of the boundary induced by the orientation of $\fS$, and all $-$ boundary states occur before any $+$ boundary states in the order determined by this orientation.  Then we have the following result of the second author.
  
  \begin{theorem}[Theorem $2.8$ in \cite{Le4}]
  \label{surfbasis}
  Suppose $\fS$ is a punctured bordered surface.  Then $B(\fS)$ is an $\mathcal{R}$-basis of $\cS(\fS).$
  \end{theorem}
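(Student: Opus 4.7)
The plan is to establish the two standard requirements of a basis separately: first that $B(\fS)$ spans $\cS(\fS)$, then that its elements are linearly independent.

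For the spanning part, I would argue by induction on a well-chosen complexity of $\pfS$-tangle diagrams. The natural complexity is a triple $(c, t, d)$ ordered lexicographically, where $c$ is the number of crossings, $t$ is the number of trivial components (closed curves bounding disks, or boundary arcs isotopable into a single boundary edge) plus the number of components which cannot be made simple, and $d$ measures the number of boundary state/height inversions relative to the preferred ordering (heights increasing along the induced boundary orientation, and $-$ states appearing before $+$ states). Any $\pfS$-tangle diagram can be reduced in the skein module as follows. Each crossing is replaced via relation (A) of Figure \ref{fig:stated-relations} by two crossingless resolutions, strictly lowering $c$. A trivial $\pfS$-knot is removed via relation (B), and a trivial $\pfS$-arc is rewritten via relations (C) and (D), lowering $t$. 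Finally, when two consecutive boundary endpoints on the same boundary wall are in the wrong height or state order, the state exchange relation (E) (or the equivalent version in Remark \ref{alternative}) expresses the diagram as a linear combination of diagrams in which either the pair has been swapped (lowering $d$) plus error terms whose supports are strictly simpler. Thus the induction terminates at an $\mathcal{R}$-linear combination of elements of $B(\fS)$.

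For linear independence, the cleanest route is to exhibit a map out of $\cS(\fS)$ on which the elements of $B(\fS)$ can be shown to be sent to $\mathcal{R}$-linearly independent vectors. The most robust such device is the embedding into a quantum torus associated to a quasitriangulation, i.e.\ the Muller-type quantum trace described in the paper's reference \cite{LY} and alluded to in display \eqref{eq.emb1}. Given an ideal triangulation (or more generally a quasitriangulation) of $\fS$, each element of $B(\fS)$ has a well-defined highest term with respect to the edge coordinates, and distinct simple diagrams with the specified boundary normal form yield distinct highest terms (this uses that a simple multicurve on a triangulated surface is determined by its Dehn-Thurston/edge-intersection data together with its boundary data). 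Since the leading terms live in a free $\mathcal{R}$-module of Laurent monomials, they are linearly independent, and hence so are the corresponding elements of $B(\fS)$. This requires that the embedding into the quantum torus be available, but one can reduce to the case where $\fS$ admits a quasitriangulation by passing to pseudo-isomorphic subsurfaces as in Example \ref{exa.1} or by capping off punctures; degenerate low-complexity cases (the disk, bigon, monogon, annulus without marked points, etc.) are checked by hand.

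An alternative, more self-contained, route to linear independence is to build an explicit state sum model along a fixed ideal triangulation of $\fS$: decompose $\fS$ into triangles, use the splitting homomorphism of \cite{Le4} to reduce to the skein algebra of a single triangle, where one knows an explicit basis (from the isomorphism of the bigon skein algebra with $\OSL$ and the braided tensor product description of the triangle recalled later in the paper), and track that a normal-form simple diagram in $B(\fS)$ maps to a sum of tensor products of triangle basis elements whose top-degree piece uniquely recovers the diagram.

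The main obstacle is the linear independence step. The spanning argument is a bookkeeping induction whose only subtlety is choosing the complexity so that the state exchange relation really is strictly reducing. Linear independence, on the other hand, requires either producing a faithful model (which is essentially as hard as the theorem itself) or a careful leading-term analysis that correctly handles both the height ordering on each boundary wall and the $-$-before-$+$ state convention — in particular verifying that the elements of $B(\fS)$ are mapped to genuinely distinct leading monomials under the quantum trace, including the subtle cases where several endpoints on one wall come from the same or parallel arcs.
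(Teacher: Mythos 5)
This theorem is not proved in the present paper at all; it is imported verbatim as Theorem 2.8 of \cite{Le4}, so there is no internal argument to compare yours against. Judged on its own terms, your spanning step is fine and is essentially the standard one: the lexicographic complexity (crossings, then trivial or non-simple components, then height-and-state inversions on the boundary walls) strictly decreases under applications of relations (A)--(E), so every stated diagram reduces to an $\cR$-combination of elements of $B(\fS)$.

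The linear independence step, however, is circular as proposed: both devices you invoke sit downstream of Theorem \ref{surfbasis} rather than upstream of it. The quantum torus embedding $\phi_{\cE,\omega}$ of \cite{LY} (Theorem \ref{qtrace} here) is established, as this paper itself states, ``using the height exchange relation and the explicit basis of $\cS\SM$''; moreover it only exists for marked surfaces satisfying condition ($\star$), so even granting it you would not cover an arbitrary punctured bordered surface. Likewise, the injectivity of the splitting homomorphism $\Theta_c$, on which your second route rests, ``follows from a consideration of the bases of the relevant stated skein algebras'' --- again the basis theorem is the input, not the output. The proof in \cite{Le4} avoids this by a direct argument: linear independence is obtained by explicitly producing an $\cR$-linear map from $\cS(\fS)$ onto a module already known to be free, ultimately resting on the classical fact (Przytycki, Sikora--Westbury) that isotopy classes of simple diagrams freely span the skein module of an unmarked surface, rather than by appealing to the quantum trace or to the splitting map. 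You correctly identify linear independence as the hard point and concede that a faithful model is ``essentially as hard as the theorem itself''; the proposal as written does not supply a non-circular route to it.
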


\subsection{Height exchange relations}

We have the following height exchange relations.

\def\reordonez{  \raisebox{-10pt}{\incl{1 cm}{reord1z}} }
\def\reordonezz{  \raisebox{-10pt}{\incl{1 cm}{reord1zz}} }
\def\reordonezy{  \raisebox{-10pt}{\incl{1 cm}{reord1zy}} }
\def\reordonezyy{  \raisebox{-10pt}{\incl{1 cm}{reord1zyy}} }

\begin{lemma}[Height exchange move, Lemma 2.4 of \cite{Le4}]\label{r.refl}
 For $\nu\in \{\pm \}$  one has
\begin{align}
\label{eq.reor}
\reordonez \  &= \ q^{\nu } \,  \reordonezz \\
\label{eq.reorsecond}
 \reordonezy &= q^{\nu} \, \reordonezyy
\end{align}
\end{lemma}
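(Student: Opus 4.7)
The plan is to derive both identities from the defining relations (A)--(E) using a single local move that combines the skein relation (A) with the trivial arc relations (C) and (D). Both equations are supported in a small ball $B$ near one boundary wall of $\tfS$: outside $B$ the two diagrams on either side agree, while inside $B$ they differ only by the height order of two endpoints, both of which carry the same state $\nu\in\{\pm\}$.

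First, inside $B$, I would replace the left-hand side by an isotopic $\tpfS$-tangle representative in which the two relevant strands cross each other transversely exactly once just before reaching the boundary wall. This is the standard ``push one strand over the other'' isotopy in $\fS\times(-1,1)$; the over/under choice determines the sign of the new crossing, and I would choose it so that the ``parallel'' smoothing described in the next step reproduces the right-hand side diagram with the two heights exchanged.

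Next, I would apply the skein relation (A) at this new crossing. It expresses the crossed tangle as an $\cR$-linear combination of two smoothings with coefficients in $\{q^{1/2},q^{-1/2}\}$: one smoothing yields the desired height-swapped diagram, while the other caps the two strands off into a short returning arc that is isotopic into the boundary wall and whose two endpoints both carry state $\nu$. By the trivial arc relations (C) and (D), a returning arc that is isotopically trivial into a boundary wall and whose two endpoints carry equal states is zero in $\cS(\fS)$, so this second smoothing drops out. The fact that both endpoints share the same state $\nu$ is precisely what lets this one-step strategy succeed; with unequal states the other smoothing would survive and one would instead recover the full state-exchange relation (E).

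Assembling the skein coefficient from (A) with the framing contribution introduced by the ``push over'' isotopy then yields the scalar $q^\nu$ in front of the surviving term, proving \eqref{eq.reor}. The second identity \eqref{eq.reorsecond} follows by running the same argument with the opposite crossing sign, or equivalently by reflecting the local picture across the plane of the boundary wall and invoking \eqref{eq.reor}. The main obstacle is sign bookkeeping: one must simultaneously track the skein-relation convention, the chosen crossing sign, the orientation of the boundary wall, and the convention $q^\nu = q^{\pm 1}$ for $\nu\in\{\pm\}$, so as to ensure that the surviving scalar is exactly $q^{+\nu}$ and not $q^{-\nu}$ for each choice of $\nu$.
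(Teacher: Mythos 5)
The central difficulty is that your argument cannot produce a coefficient that depends on the state $\nu$, yet the coefficient in the lemma genuinely does: for the same local configuration it is $q$ when both states are $+$ and $q^{-1}$ when both are $-$ (and the paper uses this, e.g.\ invoking \eqref{eq.reor} ``with $\nu=-1$'' to extract factors of $q^{-1}$ in the proof of Theorem \ref{main}). In your scheme the surviving scalar comes entirely from the skein relation (A) applied to the crossing you introduce, so it equals $q^{+1}$ or $q^{-1}$ according to the sign of that crossing --- a purely geometric choice, forced by which height order you start from and which you want to reach, and completely blind to whether $\nu$ is $+$ or $-$. (There is also no ``framing contribution'' to collect: an isotopy of framed tangles contributes no scalar; and in this paper's conventions relation (A) carries coefficients $q^{\pm1}$, not $q^{\pm1/2}$ --- see the base case of the induction in the proof of Lemma \ref{Square}.)

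One can make the obstruction precise. The only inputs you use are isotopy, relation (A), and the fact that a trivial returning arc with two equal states vanishes --- which is exactly relation (D); your appeal to (C) is only through this state-symmetric consequence. Every one of these inputs is preserved by the involution that reverses all states $+\leftrightarrow-$, so any identity derivable from them alone must be preserved by it as well. The height exchange relation is not: applying the involution to a derivation of \eqref{eq.reor} for $\nu=+$ would yield \eqref{eq.reor} for $\nu=-$ with coefficient $q^{+1}$ instead of $q^{-1}$, and comparing with the true relation would force $(q-q^{-1})$ to annihilate the right-hand diagram --- false already in the bigon, where such diagrams sit inside a free $\cR$-basis (Theorem \ref{surfbasis}). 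The state dependence therefore has to enter through the state-asymmetric relations, namely the specific scalar $q^{-1/2}$ attached to one particular mixed-state returning arc in (C) together with the state exchange relation (E). This is the essential ingredient missing from your argument; the present paper does not reprove the lemma but quotes it from \cite{Le4}, where the derivation runs through the evaluated returning arcs (``cups and caps''), and it is precisely the asymmetry between the two mixed-state returning arcs that produces $q^{\nu}$ rather than a state-independent $q^{\pm1}$.
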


Here we have identify $\pm$ with $\pm 1$ when we write $q^\nu$.

\def\cSs{\cS}
\def\tsigma{\sigma}
\subsection{Reflection anti-involution}

\begin{proposition}[Reflection anti-involution, Proposition 2.7 in \cite{Le4}]   \label{r.reflection}
Let $\fS$ be a punctured bordered surface. Suppose $\cR=\BZ[q^{\pm 1/2}]$. There exists a unique $\BZ$-linear map $\sigma: \cSs(\fS) \to \cSs(\fS)$ such that
\begin{itemize}
\item  $\sigma(q^{1/2})= q^{-1/2}$, 
\item $\sigma$ is an anti-automorphism, i.e. for any $x,y \in \cSs(\fS)$,
$$\tsigma(x+y)= \tsigma(x) + \tsigma(y), \quad \tsigma(xy) = \tsigma(y) \tsigma(x),$$
\item if $\al$ is a stated $\pfS$-tangle diagram then $\sigma(\al)$ is the result of switching all the crossings of $\al$ and reversing the linear order on each boundary edge.
\end{itemize} 
 \end{proposition}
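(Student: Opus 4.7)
The plan is to construct $\sigma$ geometrically from the orientation-reversing involution of the thickening, and then check that this construction descends through all the defining relations of the stated skein algebra.

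First I would define $\sigma$ on the free $\BZ$-module spanned by stated $\pfS$-tangle diagrams (regarded as representatives of $\tpfS$-tangles in $\fS \times (-1,1)$) by the geometric reflection $r \colon \fS \times (-1,1) \to \fS \times (-1,1)$, $(x,t) \mapsto (x,-t)$, extended $\BZ$-linearly with the rule $\sigma(q^{1/2}) = q^{-1/2}$. Under $r$, a $\tpfS$-tangle has its vertical framing preserved at the boundary (since $\cN = \cP \times (-1,1)$ is reflection-invariant but its orientation is flipped, matching the sign convention inherited from reflecting heights), the crossings visible in a $\pfS$-tangle diagram get switched (since $r$ reverses orientation of the thickening), and the height order along each boundary wall is reversed. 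This gives a candidate that tautologically matches the formula in the statement.

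Next I would verify that $\sigma$ descends to $\cS(\fS)$ by checking each defining relation in Figure~\ref{fig:stated-relations}. Relation $(A)$ is immediate from $q \leftrightarrow q^{-1}$: switching the crossing on the left and exchanging the coefficients $q, q^{-1}$ on the two resolutions yields the same skein relation. Relation $(B)$ is preserved because $-(q^2 + q^{-2})$ is $\sigma$-invariant. For the trivial arc relations $(C)$ and $(D)$, I would translate to the punctured bordered picture of Figure~\ref{fig:punctstated-relations}: the reversal of the boundary-edge arrow does not affect whether an arc is nullhomotopic to the boundary, and the constants $0, 1, q^{\pm 1/2}$ appearing on the right hand sides are either preserved or converted into each other compatibly after height reversal.

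The main obstacle, as expected, is the state exchange relation $(E)$. I would handle it using the alternative form from Remark~\ref{alternative}, which is the formulation in Figure~\ref{fig:punctstated-relations}(e), because reflecting the height order directly swaps which of the two strands is upper and thus rewrites the relation in its own mirror. The nontrivial point is that the $q$-coefficients in $(E)$ must transform into their inverses precisely as the orientation of the boundary edge is reversed; combined with the height-reversal lemma (Lemma~\ref{r.refl}) in its form \eqref{eq.reor}--\eqref{eq.reorsecond} — which itself is invariant under $\sigma$ because each side picks up $q^\nu \to q^{-\nu}$ when passed through $\sigma$ — this checks out. Concretely, I would write both sides of $(E)$ after applying $\sigma$, use \eqref{eq.reor}--\eqref{eq.reorsecond} to put the endpoints back in the original height order, and then recognize the resulting identity as an instance of $(E)$ itself with $q \mapsto q^{-1}$.

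Finally, anti-multiplicativity is automatic: stacking $\alpha$ above $\beta$ in $\fS \times (-1,1)$ is reversed by $r$, so $\sigma(\alpha \beta) = \sigma(\beta) \sigma(\alpha)$; together with $\sigma(q^{1/2}) = q^{-1/2}$ this gives the anti-automorphism property. Uniqueness is immediate because stated $\pfS$-tangle diagrams and $q^{\pm 1/2}$ generate $\cS(\fS)$ as a $\BZ$-module with the listed algebraic structure, so any $\BZ$-linear anti-automorphism satisfying the three bullet conditions is forced to agree with $\sigma$ on this generating set.
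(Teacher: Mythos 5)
The paper offers no proof of this proposition at all — it is quoted verbatim from Proposition 2.7 of \cite{Le4} — and your proposal reproduces the standard argument given there: define $\sigma$ on stated tangle diagrams by switching crossings and reversing the height order on each boundary edge, check invariance of the defining relations (the skein and trivial-knot relations being immediate from $q\mapsto q^{-1}$, the boundary relations $(C)$--$(E)$ requiring the height exchange moves of Lemma~\ref{r.refl}), and deduce uniqueness because diagrams together with $q^{\pm 1/2}$ generate everything under the listed axioms. This is correct in outline and matches the cited proof; the only caveat is that the two checks where real computation lives — verifying that the $\sigma$-images of relations $(C)$ and $(E)$ are consequences of the defining relations after restoring the original height order — are asserted rather than carried out, though you have correctly identified the height exchange lemma as the tool that closes them.
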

Clearly $\tsigma^2=\id$. We call $\tsigma$ the reflection anti-involution.

\subsection{Functoriality for surfaces} \label{sec.cut}
 {\em An embedding} of a marked surface $\SM$ into a marked surface  $(\Sigma', \mathcal{P}')$ is an orientation preserving   proper embedding $f: \Sigma \embed \Sigma'$ such that  $f(\cP)\subset \cP'$. Then $f$ induces  an $\mathcal{R}$-algebra  homomorphism $f_*: \cS\SM \to \cS(\Sigma', \cP')$.

The case of punctured bordered surface is different, and more subtle. 

Suppose $f: \fS \embed \check{\fS}$ is an orientation preserving proper embedding of a punctured bordered surface $\fS$ into another punctured bordered surface $\check{\fS}$. In general, the induced map $f_*: \cS(\fS) \to \cS(\check{\fS})$ might not be well-defined, since if two boundary edges of $\fS$ are mapped into one boundary edge $b$ of $\check{\fS}$, then the height order of the image of (stated) tangle on $b$ might not be well-defined. 

For each boundary edge $b$ of $\check{\fS}$ choose a linear order of all the boundary edges of $\fS$ that mapped into $b$. Then $f$, equipped with such an ordering for every boundary edge of $\check{\fS}$, gives a well-defined $\cR$-linear map   $f_*: \cS(\fS) \to \cS(\check{\fS})$, but $f_*$ is not an algebra homomorphism in general.

For example, if the boundary of $\check{\fS}$ is equipped with an orientation (which on a component might or might not be the orientation induced from $\check{\fS}$) then we can order all the boundary edges of $\fS$ that are mapped into a boundary edge $b$ of $\check{\fS}$ using the orientation of $b$: as one moves on $b$ along the direction of the orientation, the order is increasing. Thus, for any proper embedding of punctured bordered surfaces
$f: \fS \embed \check{\fS}$ where $\partial \check{\fS}$ is equipped with an orientation, we can define an $\cR$-linear map $f_*: \cS(\fS) \to \cS(\check{\fS})$.

\subsection{Ideal triangulations of surfaces}
\label{IdealTriang}

An ideal triangle, denoted $\mathfrak{T}$, is the punctured bordered surface obtained from the closed disk by removing three boundary points, which we view as a triangle without its vertices. An ideal triangulation of a punctured bordered surface, $\fS$, is a realization of $\fS$ as the result of gluing a finite collection of ideal triangles along their boundary edges. We say that a punctured bordered surface is triangulable if it  admits an ideal triangulation. 

Another way to define ideal triangulations is the following. An {\em ideal arc} in $\fS$ is the image of a proper embedding
 $c:(0,1)\embed \fS$. This means, if we present $\fS=\bfS \setminus \cV$ where $\bfS$ is a compact surface with boundary and $\cV$ is a finite subset, then $c$ can be extended to an immersion $\bar c : [0,1] \to \bfS$ such that $\bar c(0), \bar c(1) \in 
 \cV$. An ideal arc is trivial if it bounds a disk.
 
  Then when $\fS$ is triangulable,  an {\em ideal triangulation} of $\fS$ is a maximal collection of disjoint non-trival ideal arcs which are pairwise non-isotopic.  
 
 We note that non-triangulable surfaces are (i) surfaces with $|\cV|=0$, (ii) $\bfS$ is a sphere with $|\cV|\leq 2$, and (iii) $\bfS$ is a disk with $\cV\subseteq \partial \bfS$ where $|\cV|\leq 2$.

\subsection{Positive state submodule}
\label{positivesubmod}
The submodule $\cSp\MN$ of $\cS\MN$ spanned by $\cN$-tangles such that each state is positive was introduced in \cite{Le3}. The corresponding notion $\cSp\SM$ of a marked surface was first defined by Muller \cite{Muller} in connection with quantum cluster algebras. In a sense, the Muller algebra $\cSp\SM$ is a quantization of the decorated Teichm\"uller space of Penner \cite{Penner}.

\section{Splitting homomorphism} 
The ability to split stated skein algebras along ideal arcs, as introduced in \cite{Le4}, has been instrumental in understanding the structure of stated skein algebras of surfaces \cite{CL}.  We now show that this can be generalized to stated skein modules in general.  

\subsection{3-manifold case}
Suppose $\MN$ is a marked 3-manifold and $D$ is a properly embedded closed disk in $M$ which is disjoint from the closure of $\cN$. By splitting $M$ along $D$ we mean taking a 3-manifold $M'$ whose boundary contains two copies $D_1$ and $D_2$ of $D$ such that gluing $D_1$ to $D_2$ gives a manifold homeomorphic to $M$. Additionally, let $a\subset D$ be any choice of an oriented open interval along with $a_1\subset D_1$ and $a_2\subset D_2$ being the image of $a$ under the splitting $M$ along $D$.  Then the marked 3-manifold $(M', \cN')$,  where $\cN'= \cN \cup a_1 \cup a_2$, is called a splitting of $\MN$ along $(D,a)$.

An $\cN$-tangle $\al$ in $M$ is said to be {\em $(D,a)$-transverse} if its splitting along $(D,a)$ is an $\cN'$-tangle, meaning $\al$ is transverse to $D$,  $\al \cap D = \al \cap a$, and the framing at every point of $\al \cap a$  is a positive tangent vector of $a$. Suppose additionally that $\al$ is stated. The splitting $\al'$ of $\al$ then has new boundary points on $a_1 \cup a_2$ which do not have an associated value in $\{\pm\}$.
Given any map $s:\al \cap a \to \{\pm\}$ let $(\al',s)$ be the stated $\cN'$-tangle with $s(x)$ determining the state at any boundary point $x'$ in $a_1 \cup a_2$ by $s(x)$, where $x'$ is the image under splitting along $(D,a)$ of $x\in a$. We call $(\tal,s)$  a lift of $\al$, and note that if $|\al\cap a|=k$ then $\al$ has $2^k$ distinct lifts.

\begin{theorem}
\label{SplittingHomo} Suppose $\MN$ is a marked $3-$manifold and $D$ is a closed, properly embedded disk in $M$. Assume additionally that $D$ is disjoint from the closure of $\cN$, and $a$ is an oriented open interval in the  interior of $D$. 
Let $(M',\cN')$ be a result of splitting $(M, \cN)$ along $(D,a)$, as described above.

There is a unique $\mathcal{R}$-module homomorphism
$$ \Theta_{(D,a)}: \cS(M,\cN) \to \cS(M',\cN')$$
such that if $\al$ is an $\cN$-tangle $\al$  in $M$ which is $(D,a)$-transveral, then $\Theta_{(D,a)}(\al)$ is the sum of all lifts of $\al$. Utilizing the notation from abvoe,
$$ \Theta_{(D,a)}([\al]) = \sum_{s: \al \cap a \to \{\pm\}} [(\al',s)].$$

\end{theorem}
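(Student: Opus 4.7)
The plan is to define $\Theta_{(D,a)}$ on the free $\mathcal{R}$-module spanned by $(D,a)$-transverse stated $\cN$-tangles by the given sum-of-lifts formula, and then to show the assignment descends through $\cN$-isotopy and through the defining relations of Figure \ref{fig:stated-relations}. I would organise this into three steps, the second of which is the main obstacle.

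\textbf{Step 1 (Transverse representatives).} First I would invoke a standard transversality argument: since $\alpha$ is compact and $D$ is disjoint from the closure of $\cN$, a small perturbation of $\alpha$ supported away from a collar of $\cN$ makes $\alpha$ transverse to $D$ with finitely many intersections contained in the interior of $a$, and a further small rotation of the framing near each intersection point arranges the framing to be a positive tangent vector of $a$. Thus $(D,a)$-transverse stated $\cN$-tangles span $\cS(M,\cN)$, which also makes uniqueness of $\Theta_{(D,a)}$ automatic once existence is established.

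\textbf{Step 2 (Isotopy invariance).} Given two $(D,a)$-transverse $\cN$-tangles $\alpha_0, \alpha_1$ connected by an $\cN$-isotopy $\{\alpha_t\}$, I would choose the isotopy generically so that it is $(D,a)$-transverse except at finitely many times $t_i$, at each of which a strand becomes tangent to $D$ at an interior point of $a$ and a pair of intersection points is created or destroyed; other codimension-one failures can be avoided by a further generic adjustment of framing. The local model near $t_i$, after splitting, places a small trivial arc $\beta$ with both endpoints on one boundary wall, say $a_2$, on one side of $D$, with the complementary tangle on the other side carrying two matching stubs on $a_1$. The key computation is to evaluate the sum of the four lifts indexed by states $(s_1,s_2)\in\{\pm\}\times\{\pm\}$ at the two new intersection points: the trivial arc relations (C) and (D) in $\cS(M',\cN')$ force the same-sign contributions to vanish and evaluate the mixed-sign contributions as explicit scalars, and a direct calculation patterned on the surface case in \cite{Le4} shows that the surviving terms recombine precisely to the sum of lifts of the pre-tangency tangle. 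The coefficients appearing in (C) and (D) are tailored exactly for this compatibility, which is why one expects success, but the case analysis is the main technical step.

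\textbf{Step 3 (Defining relations).} Each relation of Figure \ref{fig:stated-relations} has support in a small ball $B \subset M$; relations (C)--(E) are supported near $\cN$ while (A) and (B) can be isotoped to have support anywhere in the interior of $M$. Since $D$ is disjoint from the closure of $\cN$, I would arrange by an isotopy of the tangles that $B \cap D = \emptyset$. Then every term in the relation has the same configuration outside $B$ and an identical local picture inside $B$; after lifting, summing over all state assignments on $\alpha \cap a$ produces a valid equality in $\cS(M',\cN')$. Combined with Steps 1 and 2, this produces the unique $\mathcal{R}$-linear map $\Theta_{(D,a)}$ with the stated value on each transverse $\cN$-tangle.
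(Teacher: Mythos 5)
Your overall architecture — define the map on the free module of $(D,a)$-transverse representatives, then check invariance under isotopy and under the defining relations — is the same as the paper's, and Steps 1 and 3 are essentially sound (the paper likewise either pushes the support of a relation off $D$ or into a collar of $D$). The problem is in Step 2. The assertion that, apart from tangencies of a strand to $D$, ``other codimension-one failures can be avoided by a further generic adjustment of framing'' is false, and the moves you are discarding are precisely where most of the work lies. At least three unavoidable codimension-one events remain. First, two points of $\al\cap a$ can exchange their order along $a$: since $\al$ is embedded they cannot pass through each other on $a$, so one intersection point must make an excursion into $D\setminus a$ (or the strand must be lifted off $D$ and replaced), and the net effect on the split tangle is a reordering of the endpoints on the new markings $a_1,a_2$ together with a new crossing elsewhere — this is \emph{not} an $\cN'$-isotopy, and checking that the sum of lifts is unchanged is a genuine computation using the height-exchange relations of Lemma \ref{r.refl} and relation (E). Second, a crossing of the tangle can slide across $D$, moving from one side of the splitting to the other. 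Third, the framing at an intersection point can undergo a full rotation (a kink passing through $D$). None of these can be removed by genericity; they are exactly the moves the paper's proof isolates (``tangency to $D$, crossings coinciding with $D$ and height exchanges'') after identifying a collar of $D$ with $(-1,1)^3$ and projecting onto $D$, and whose verification it delegates to the diagrammatic computations in Theorem 3.1 of \cite{Le4}.

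A secondary, smaller point: even for the tangency move your sketch is incomplete as stated, since after killing the two same-state lifts with relation (D) you are left with \emph{two} mixed-state terms, only one of which is evaluated by relation (C); the other requires relation (E) (or its alternate form in Remark \ref{alternative}) before the surviving contributions recombine into the sum of lifts of the straightened tangle. So the conclusion is correct and reachable by your route, but Step 2 needs the full list of elementary moves and their case-by-case verification rather than a genericity dismissal.
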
 
\begin{remark}
When it is clear from context we will denote $\Theta_{(D,a)}$ by $\Theta$, or $\Theta_D$ if we look to only emphasize the disk $D$.  
\end{remark}

\def\pr{\mathrm{pr}}
\begin{proof} We will reduce the proof to the case covered by \cite[Theorem 3.1]{Le4}.  

Let $T(D,a)$ denote the $\mathcal{R}-$module freely generated by the set of stated $\cN$-tangles which are $(D,a)-$transverse, noting that this is the set of all tangles and not the collection of isotopy classes.  We also note that any isotopy class of $\cN$-tangles contains a $(D,a)-$transverse representative. Thus the skein module $\cS\MN$ is the quotient  of $T(D,a)$ by the equivalence relation generated by isotopies of $(D,a)-$transverse  $\cN$-tangles and the defining relations in Figure \ref{fig:stated-relations}.

 Define an $\cR$-linear map $\widehat{\Theta}:T(D,a)\rightarrow \mathscr{S}(M',\cN')$
by
\[\widehat{\Theta}(\alpha)=\sum_{s: \al \cap a \to \{\pm\}} [(\tal,s)].\]

We will show that $\widehat{\Theta}$ descends to a well defined map, $\Theta:\mathscr{S}(M,\cN)\rightarrow \mathscr{S}(M',\cN')$, by  showing that $\widehat{\Theta}$ is constant on isotopy classes, and that the image of the defining relations in Figure \ref{fig:stated-relations} are equal.

Now take $U$ to be  an embedded collared neighborhood of $D$.
Then any isotopy can be decomposed into stages which are supported in the interior of $U$ and stages which are supported in the complement of $D$. The support of any defining relation 
can also be assumed either to be in $U$ or disjoint from $D$.
 If the support of an isotopy or a defining relation is disjoint from $D$, then the image under $\widehat{\Theta}$ is unchanged by definition.  Now we see that isotopies and defining relations which are supported in the interior of $U$ can be represented diagrammatically.  Take a diffeomorphism of the interior of $U$ to the open cube $(-1,1)^3$ such that $a\subset \{(0,0)\}\times(-1,1)$ and $D$ is the closure of $\{0\}\times(-1,1)^2$.  In this standardized form,  an isotopy supported in $U$ can be further decomposed into a combination of Reidemeister moves and additional moves created by tangency to $D$, crossings coinciding with $D$ and height exchanges.  The invariance of these moves and the defining relations is verified by considering diagrammatic projections onto $D$ in Theorem $3.1$ of \cite{Le4}. Thus $\Theta$ is well defined.
\end{proof}

\begin{remark}
\label{notcloseddisk} In Theorem \ref{SplittingHomo} we can relax the requirement that $D$ is a closed disk. We may instead assume that $D$ is a disk obtained from a closed disk by removing  a finite number of closed intervals on the boundary. When such a disk $D$ is properly embedded into $M$ and has an open interval $a$ in its interior, we can split $\MN$ as usual, and Theorem \ref{SplittingHomo} still holds.  This case actually reduces to the case of splitting along a closed disk by utilizing the pseudo-isomorphisms of Example \ref{exa.1}.
\end{remark}

\begin{remark}
Immediately from the definition of the splitting homomoprhism we see that for any two disjoint embedded disks $D_1$ and $D_2$ splitting along the two disks commutes in the sense that
\[\Theta_{D_1}\circ \Theta_{D_2}=\Theta_{D_2}\circ \Theta_{D_1}\]
\end{remark}

\def\fB{{\mathfrak B}}

\subsection{Specializing to surfaces} Specializing the splitting homomorphism of Theorem \ref{SplittingHomo} to the case of thickenings of marked surfaces, and then translating to the language of punctured bordered surfaces as explained in Subsection \ref{sec.equiv}, we can recover the splitting homomorphism of stated skein algebras originally developed in \cite{Le4}.

Let $c$ be an ideal arc of a punctured bordered surface $\fS$.
By splitting $\fS$ along an ideal arc $c$ we mean taking a new punctured bordered surface $\fS'$ whose boundary contains two boundary edges $c_1$ and $c_2$  which when glued together give a surface homeomorphic to $\fS$ where the image of $c_1$ and $c_2$ is $c$. The thickening of $\fS$, denoted $\tfS:=\fS \times (-1,1)$, has boundary $\tpfS=\pfS \times (-1,1)$. A $\tpfS$-tangle $\al$ is {\em vertically transverse to} $c\times (-1,1)$ if $\al$ is transverse to $c\times (-1,1)$, the framing at every point of $\al\cap (c\times(-1,1))$ is vertical, and the heights of points in $\al\cap (c\times(-1,1))$ are distinct.  Additionally, assume that $\al$ is stated. Then given any $s: \al\cap (c\times(-1,1))\to \{\pm\}$, let $(\al's)$ be the $\tpfS'$-tangle (in $\tfS'$) which is the splitting of $\al$ with states on the newly created boundary points determined by $s$, and call such $(\al',s)$ a lift of $\al$. Theorem \ref{SplittingHomo} for punctured bordered surfaces becomes the following, which was originally proved in \cite{Le4}.

\begin{theorem}[Theorem $3.1$ of \cite{Le4}]
\label{surfacesplit} Suppose $c$ is  an ideal arc of a punctured bordered surface $\fS$ and $\fS'$ is a splitting of $\fS$ along $c$. There is a 
 is a unique $\mathcal{R}$-algebra homomorphism \[\Theta_c:\mathscr{S}(\fS)\rightarrow \mathscr{S}(\fS')\]
 such that if $\al$ is a stated $\tpfS$-tangle vertically transverse to $c \times (-1,1)$ then $ \Theta_c(\al)$ is the sum of all the lifts of $\al$. Moreover $\Theta_c$ is an algebra embedding. 
\end{theorem}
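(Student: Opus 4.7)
The plan is to deduce Theorem~\ref{surfacesplit} from the 3-manifold splitting homomorphism of Theorem~\ref{SplittingHomo} and then upgrade the resulting $\mathcal{R}$-linear map to an algebra embedding. First I would construct the map. Take the thickening $\tfS = \fS \times (-1,1)$ with its marked 3-manifold structure, and let $D = c \times (-1,1)$ together with a horizontal oriented open interval $a \subset c \times \{0\}$. Although $D$ is not a closed disk, it is obtained from a closed square by removing the two edges $c \times \{\pm 1\}$, so by Remark~\ref{notcloseddisk} the splitting homomorphism $\Theta_{(D,a)}\colon \cS(\tfS) \to \cS(\tfS')$ from Theorem~\ref{SplittingHomo} is well-defined. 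A stated $\tpfS$-tangle vertically transverse to $c \times (-1,1)$ in the sense defined before the theorem is exactly a $(D,a)$-transverse $\cN$-tangle in the 3-manifold sense, so $\Theta_c := \Theta_{(D,a)}$ sends such a tangle to the sum of its lifts. Uniqueness is automatic since every $\tpfS$-isotopy class contains a vertically transverse representative.

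Next I would show that $\Theta_c$ is an algebra homomorphism. For two stated $\pfS$-tangle diagrams $\al, \beta$ representing classes in $\cS(\fS)$, realize $\al\beta$ by stacking $\al$ above $\beta$ in $\fS \times (-1,1)$ and isotope so that both $\al$ and $\beta$ are vertically transverse to $c \times (-1,1)$, with all endpoints of $\al$ on $c \times (-1,1)$ lying above all those of $\beta$. Then splitting commutes with vertical stacking: a lift of $\al\beta$ is uniquely determined by a choice of states on $\al \cap (c \times (-1,1))$ together with a choice on $\beta \cap (c \times (-1,1))$, because the two blocks of endpoints stay disjoint in height after splitting. Summing over all such state assignments factors as a product, giving $\Theta_c([\al\beta]) = \Theta_c([\al])\,\Theta_c([\beta])$.

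To prove that $\Theta_c$ is injective I would argue by a triangular/leading-term method using the basis $B(\fS)$ of Theorem~\ref{surfbasis}. For each simple $\pfS$-tangle diagram $\al \in B(\fS)$, isotope $\al$ to be vertically transverse to $c$ and in minimal position with $c$, so that the geometric intersection number $|\al \cap c|$ is an isotopy invariant $k(\al)$. Order the lifts of $\al$ by the sequence of states read along $c$, and observe that the lift $\al^{+}$ with all states equal to $+$ represents, after reordering to the basis form of Theorem~\ref{surfbasis} applied to $\fS'$, a basis element of $\cS(\fS')$ whose projection to that basis has a canonical leading coefficient (a power of $q^{1/2}$). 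The other lifts, after the same reordering, yield basis elements of $\cS(\fS')$ that are distinguishable from $\al^{+}$ by the pattern of states on $c_1$ and $c_2$. Consequently $\Theta_c$ maps distinct elements of $B(\fS)$ to $\mathcal{R}$-linearly independent elements of $\cS(\fS')$, so it is an embedding.

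The main obstacle is the injectivity step: the key subtlety is that after splitting one must reorder endpoints on $c_1$ and $c_2$ to fit them into the basis of $B(\fS')$, and this introduces lower-order correction terms via the height exchange relations of Lemma~\ref{r.refl}. I would handle this by putting a filtration on $\cS(\fS')$ by total boundary state profile on $c_1 \cup c_2$ (e.g.\ ordered lexicographically with the all-$+$ profile maximal), observing that height reordering preserves the profile up to a nonzero scalar and lower-profile terms, and then showing that the leading profile part of $\Theta_c(\al)$ records $\al$ uniquely. This is the step that genuinely uses the form of the state-exchange relation~(E) and has no analogue in the purely set-theoretic splitting of the 3-manifold case.
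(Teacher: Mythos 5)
Your overall route is the same as the paper's: specialize the 3-manifold splitting map of Theorem~\ref{SplittingHomo} (via Remark~\ref{notcloseddisk}) to the thickened surface, observe that multiplicativity is immediate from stacking, and get injectivity from the bases of Theorem~\ref{surfbasis}. The paper itself only sketches this, deferring the details to \cite{Le4}, and your injectivity argument (triangularity with respect to a filtration by state profile on $c_1\cup c_2$, with the height/state exchange corrections being lower order) is the standard way that step is carried out there.

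There is, however, one concrete error in your setup: the interval $a$ must be \emph{vertical}, i.e.\ $a=\{x_0\}\times(-1,1)$ for a point $x_0\in c$, not a horizontal interval inside $c\times\{0\}$. The definition of $(D,a)$-transversality requires $\al\cap D=\al\cap a$ and the framing at each such point to be a positive tangent vector of $a$; with your horizontal $a$ this forces all intersection points to lie at height $0$ (contradicting the distinct-heights requirement of vertical transversality) and forces the framing to be tangent to $c$ rather than vertical. So your claimed identification of ``vertically transverse'' with ``$(D,a)$-transverse'' fails as stated. With the vertical choice of $a$, a vertically transverse tangle can be isotoped so that all its intersections with $c\times(-1,1)$ occur over the single point $x_0$ at distinct heights with vertical framing, the two notions coincide, and the new markings $a_1,a_2$ give exactly the boundary walls of $\tfS'$ with the correct height order. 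This is a local fix and does not affect the rest of your argument.

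One further caution on the injectivity step: the correction terms produced when reordering endpoints are not only scalar multiples of the same underlying diagram (from Lemma~\ref{r.refl}) but can also involve the state exchange relation, whose extra term has a returning arc and hence a \emph{different} underlying diagram with fewer endpoints on $c_1\cup c_2$. Your filtration therefore needs to be by the number of endpoints on $c_1\cup c_2$ first, and only then by the state profile; as long as you set it up that way, the triangularity argument goes through.
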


The fact that $\Theta_c$ is an algebra homomorphism is clear from the definition; that it is an embedding follows from a consideration of the bases of the relevant stated skein algebras.

\def\onto{\twoheadrightarrow}
\def\tc{{\tilde c}}
\def\tfS{\check{\fS}}

\subsection{Splitting the splitting homomorphism}\label{sec.spl2}
Suppose $c$ is an oriented ideal arc of a punctured bordered surface $\fS$, and $\cV\subset c$ is a finite collection of points on $c$. Let $\tfS = \fS\setminus \cV$. Then $c\setminus \cV =\sqcup_{i=1}^k c_i$ is the disjoint union of ideal arcs $c_i$ of $\tfS$. Let $\fS'$ be the result of splitting $\fS$ along $c$, and $\tfS'$ be the result of splitting $\tfS$ along all $c_i$. We have natural embeddings $\iota: \tfS \embed \fS$ and $\iota: \tfS' \embed \fS'$ as seen in Figure \ref{fig:split4}.

\begin{figure}[htpb]
    \centering
    \includegraphics[scale=.45]{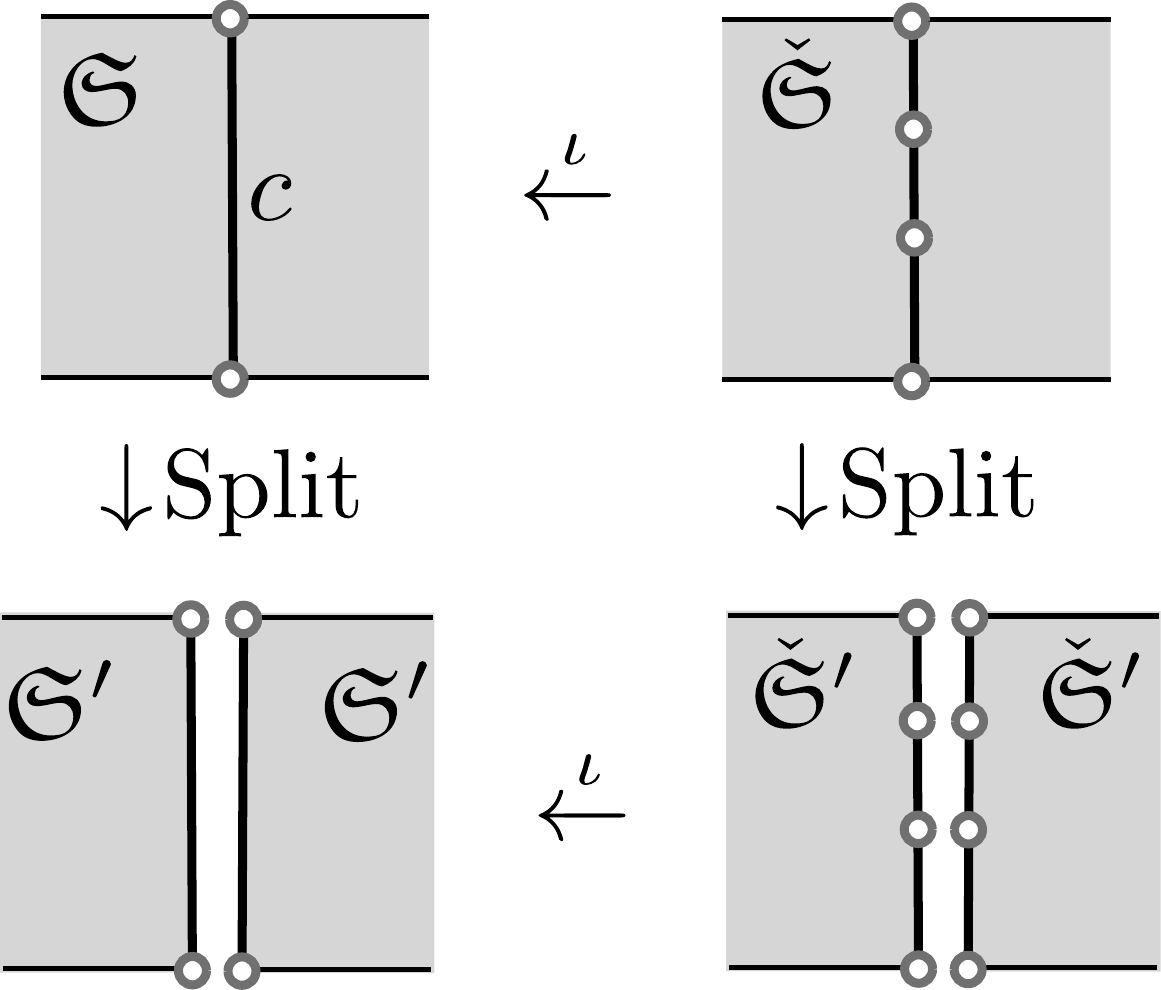}
    \caption{The top left of this diagram shows the ideal arc $c$ in $\fS$. The top right shows the result of removing $k$ points (in this case $3)$ on $c$ to give a family of ideal arcs, $c_i$, in $\tfS$. The bottom left shows a splitting of $\fS$ along $c$, called $\fS'$. The bottom right shows the result of splitting $\tfS$ along each $c_i$ to get $\tfS'$.
 The horizontal maps $\iota$ are the natural embeddings of punctured bordered surfaces which can be considered additionally with compatible orientations on each $c_i$ with an orientation on $c$. The vertical arrows indicate splitting along the appropriate ideal arcs, but these do not correspond to maps between surfaces.} 
    \label{fig:split4}
\end{figure}

The orientation of $c$ induces an orientation on the two boundary edges of $\fS'$ which are the image of $c$ under splitting along $c$. Using the orientation of these copies copies of $c$ we can use functoriality to define the induced map $\iota_*: \cS(\tfS') \to \cS(\fS')$, as described in Subsection \ref{sec.cut}.

By the commutativity of the splitting homomorphisms, the composition of all of the splitting homomorphisms $\Theta_{c_i}$ can be taken in any order. With a slight abuse of notation, we also denote this composition by $\Theta_c: \cS(\tfS) \to \cS(\tfS')$.
From the definition of the splitting homomorphism applied to stated tangle diagrams we have the following result.
\begin{lemma}
\label{r.cutk} One has that $\Theta_c$ and $\iota_*$ commute, in the sense that
the following diagram is commutative:

\[\begin{tikzcd}
\mathscr{S}(\fS)\arrow[d,"\Theta_c"'] & \arrow[l,"\iota_*"']\mathscr{S}(\tfS)\arrow[d,"\Theta_c"] \\
\mathscr{S}(\fS') & \arrow[l,"\iota_*"]\mathscr{S}(\tfS')\\
\end{tikzcd}\]

\end{lemma}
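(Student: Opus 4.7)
The plan is to evaluate both compositions on a carefully chosen representative and verify that the results agree term by term in $\cS(\fS')$. Given a class in $\cS(\check \fS)$ represented by a stated tangle diagram $\alpha$, I would first isotope $\alpha$ so that it is transverse to each $c_i$ with distinct boundary points. Since $\cV$ is a finite set of interior points of $\fS$, we may assume $\alpha$ is disjoint from $\cV$, and consequently the intersection of $\alpha$ (viewed in $\fS$ via $\iota$) with $c$ coincides with $\sqcup_i(\alpha\cap c_i)$ as a set. Among the admissible height orderings I would choose one in which, along the positive direction of the orientation of $c$, the heights on $c_i$ all precede those on $c_{i+1}$, while within each $c_i$ the prescribed linear order is retained.

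Along the top–right path, $\iota_*$ carries $\alpha$ to the same diagram viewed in $\fS$, after which $\Theta_c$ produces the sum $\sum_s(\tilde\alpha,s)$ over all state assignments $s:\alpha\cap c\to\{\pm\}$; the endpoints of each lift lie on $c_1'\cup c_2'$, and the linear order on each boundary wall $c_j'\times(-1,1)$ is inherited directly from the heights on $c\times(-1,1)$. Along the bottom–left path, $\Theta_c=\Theta_{c_1}\circ\cdots\circ\Theta_{c_k}$ produces the same sum of lifts, but with the new endpoints distributed over the boundary edges $c_{i,j}$ of $\check\fS'$, each $c_{i,j}$ inheriting its order from the heights on $c_i\times(-1,1)$. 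The subsequent $\iota_*$ embeds these lifts into $\cS(\fS')$, using the orientation of $c$ to linearly order the boundary edges $c_{i,j}$ sharing a common boundary edge $c_j'$.

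To finish I would compare the outputs directly. The sets of state assignments agree, each pair of corresponding lifts has the same underlying unstated tangle and identical state, and by the adapted choice of heights the two prescriptions for the linear order on $c_j'\times(-1,1)$ coincide: the orientation of $c$ dictates the order between successive blocks $c_{i,j}$ and $c_{i+1,j}$, while the original heights dictate the order within each block. Hence the corresponding summands coincide in $\cS(\fS')$, and commutativity of the diagram follows.

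The main obstacle is precisely this bookkeeping of linear orders on the two new boundary walls of $\fS'$: the direct splitting in $\fS$ records heights on $c\times(-1,1)$, whereas the composition via $\check\fS'$ records heights on each $c_i\times(-1,1)$ separately and must then be reassembled via the orientation of $c$. Well-definedness of $\Theta_c$ (Theorem \ref{surfacesplit}) frees us to pick a single representative whose heights realize both orderings simultaneously, at which point the argument reduces to recognizing that two manifestly identical stated tangle diagrams yield the same element of $\cS(\fS')$.
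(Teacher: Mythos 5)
Your proposal is correct and follows the same route the paper takes: the paper simply asserts the lemma "from the definition of the splitting homomorphism applied to stated tangle diagrams," and your argument spells out exactly that verification, with the one genuine subtlety (reconciling the height orders on the new boundary walls of $\fS'$ coming from the two paths) handled correctly by choosing a representative whose heights along $c$ respect the block ordering induced by the orientation of $c$.
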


\subsection{The bigon and a coaction}
\label{sec.bigon}
 
\def\SB{\cS(\fB)} 
\def\USL{{U_{q^2}(sl_2)}}
 
The  Hopf algebra $\mathcal{O}_{q^2}(SL(2))$, known as the quantum coordinate ring of the Lie group $SL_2$, is 
 the $\mathcal{R}-$algebra generated by $a,b,c,d$ with relations
\begin{align}
&ca=q^2ac, \quad  db=q^2bd,\quad  ba=q^2ab,\quad  dc=q^2cd,\quad bc=cb,  \label{eq.OSL}\\
& ad-q^{-2}bc=1, \quad da-q^2cb=1 \label{eq.adda}
\end{align}
The coproduct $\Delta$, counit $\epsilon$, and antipode $S$  are given by
\be \Delta(a)=a\otimes a+b\otimes c, \Delta(b)=a\otimes b+b\otimes d, \Delta(c)=c\otimes a+d\otimes c, \Delta(d)=c\otimes b+d\otimes d  \label{eq.Delta}
\ee
\[\epsilon(a)=\epsilon(d)=1, \quad \epsilon(c)=\epsilon(b)=0\]
\[S(a)=d,\quad  S(d)=a, \quad S(b)=-q^{2}b,\quad  S(c)=-q^{-2}c.\]

In addition, the Hopf algebra $\mathcal{O}_{q^2}(SL(2))$ has a co-$R$-matrix with which it becomes a dual quasitriangular Hopf algebra (see \cite[Section 2.2]{Majid}), which is also known as a co-braided Hopf algebra (see \cite[Section VIII.5]{Kassel}). The co-$R$-matrix, whose explicit  formula will be recalled in Definition \ref{co-Rmat}, helps to make the category of $\OSL$-modules a ribbon category allowing for the construction of quantum invariants of links and 3-manifolds. 

The bigon $\fB$ is the punctured border surface obtained from the closed disk by removing two of its boundary points. Alternatively we can identify $\fB$ with $(-1,1) \times [0,1]$, and the arc $\al= 0 \times [0,1]$ is called the {\em core} of $\fB$, see Figure \ref{fig:bigon1}.

\begin{figure}[htpb]
    \centering
    \includegraphics[scale=.3]{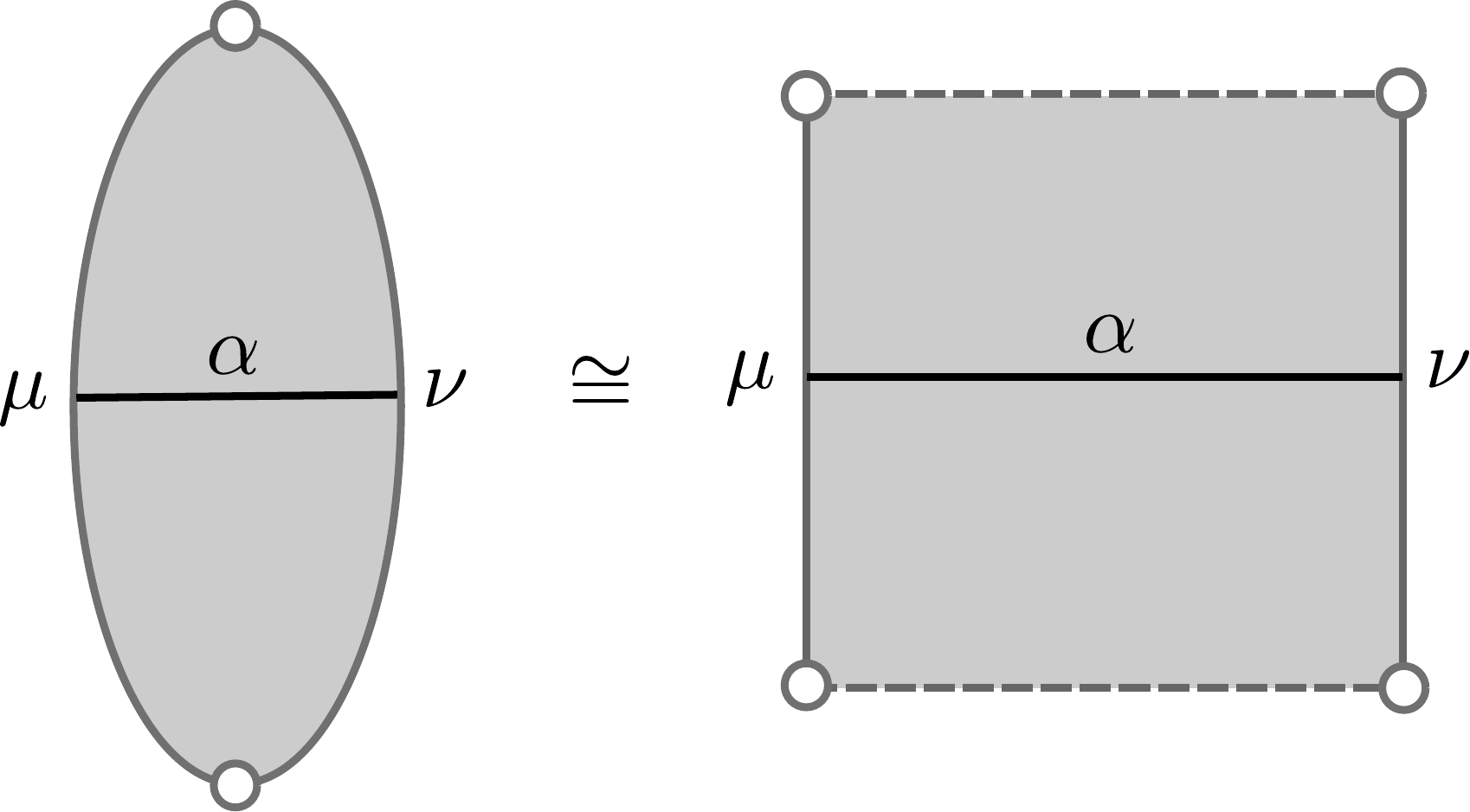}
    \caption{Two depictions of the bigon, with the arc $\alpha$ stated with $\mu$ and $\nu$.}
    \label{fig:bigon1}
\end{figure}

Splitting $\fB$ along an ideal arc connecting the two ideal points gives a disjoint union of two copies of $\fB$. Hence the splitting homomorphism gives an algebra homomorphism
$$ \Delta: \SB \to \SB \ot \SB,$$
which is compatible with the algebra structure. There are also geometric definitions of a counit, an antipode, and a co-$R$-matrix which make $\SB$ a co-braided Hopf algebra.  It is shown in \cite{CL} that there is an isomorphism of  co-braided Hopf algebras $\SB$ and $\OSL$ given by
\be 
\al_{++} \to a, \ \al_{+-} \to b, \ \al_{-+} \to c, \ \al_{--} \to d. \label{eq.isoOSL}
\ee

The bigon $\fB$, a punctured bordered surface, corresponds to the marked surface $(\fB, \cP)$, where  $\cP$ consists of two points, one on each on each boundary edge. We call $(\fB, \cP)$ the {\em marked bigon}.

Let $a$ be a connected component of the set of markings $\cN$ of a marked 3-manifold $\MN$. A small neighborhood of $a$ in $\partial M$ is a disk $D$. By pushing the interior of $D$ inside $M$ we get a new disk $D'$ which is properly embedded in $M$. Splitting $\MN$ along $D'$, we get a new marked 3-manifold $(M', \cN')$ isomorphic to $\MN$, and another marked 3-manifold bounded by $D$ and $D'$. The latter, after removing the common boundary of $D$ and $D'$, is isomorphic to the thickening of the marked bigon.
 Hence the splitting homomorphism gives an $\cR$-homomorphism
$$ \Delta_a: \cS\MN \to \cS\MN \ot \OSL.$$
Directly following the argument for the case of surfaces in \cite{CL}, one can check that $\Delta_a$ gives $\cS\MN$ a right comodule structure over $\OSL$. As $\OSL$ is the Hopf dual of the quantized envelopping algebra $\USL$, a right comodule over $\OSL$ is a left module over $\USL$, whose representation theory is well known. In the case of surfaces, the $\USL$-module structure of $\cS\MN$ is described in \cite{CL}, and the method developed there can be used to study the $\USL$-module structure of $\cS\MN$ in the 3-manifold case as well.

For later use, let us record the  following computation.
\begin{lemma} \label{r.ad1}
Consider the algebra $\OSL$ with generators $a,b,c,d$ as defined above.

(a) For any $x\in \{a,b,c,d\}$ let $x_+$ and $x_-$ be respectively the first and the second term in the formula of $\Delta(x)$ given by \eqref{eq.Delta}. Then 
\be 
x_- x_+ = q^4 x_+ x_-.  \label{eq.x1}
\ee

 (b) If $q$ is a root of 1 with $\ord(q^4)=N$ then 
\be 
T_N(a+d) = a^N + d^N. \label{eq.ad1}
\ee

\end{lemma}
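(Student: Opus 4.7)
The plan is to handle (a) by a direct four-case computation in $\OSL\ot\OSL$, and to prove (b) by first reducing to a quantum-plane identity via adjoining $a^{-1}$, then carrying out a Chebyshev-recursion induction.

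For (a), for each $x\in\{a,b,c,d\}$ I would read off $x_\pm$ from \eqref{eq.Delta} and multiply out both $x_+x_-$ and $x_-x_+$ directly in $\OSL\ot\OSL$. In each case the verification applies exactly one of the quadratic relations in \eqref{eq.OSL} per tensor factor, producing the overall factor $q^2\cdot q^2 = q^4$. For example, with $x=a$: $x_+x_- = ab\ot ac$ while $x_-x_+ = ba\ot ca = (q^2 ab)\ot(q^2 ac) = q^4\, x_+x_-$; the remaining cases $x=b,c,d$ are entirely analogous (one uses $db=q^2bd$ and $ba=q^2 ab$ for $x=b$, the $dc$ and $ca$ relations for $x=c$, and the $dc$ and $db$ relations for $x=d$).

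For (b), eliminating $bc$ from \eqref{eq.adda} using $bc=cb$ yields the two-variable commutation $da = q^4 ad + (1-q^4)$ in $\OSL$. Passing to the Ore extension $\OSL[a^{-1}]$ and setting $\hat d := d - a^{-1}$, a one-line check from the above relation gives $\hat d a = q^4 a\hat d$, so $a$ and $\hat d$ generate a quantum plane and consequently $\hat d a^{-1} = q^{-4} a^{-1}\hat d$. Since $\ord(q^{-4}) = N$, Lemma~\ref{qfacts}(c) applied to the pair $(a^{-1},\hat d)$ gives
\[
d^N = (a^{-1} + \hat d)^N = a^{-N} + \hat d^N,
\]
and hence $a^N + d^N = a^N + a^{-N} + \hat d^N$. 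The lemma therefore reduces to proving the quantum-plane identity
\[
T_N(a + a^{-1} + \hat d) = a^N + a^{-N} + \hat d^N
\]
inside $\langle a^{\pm 1},\hat d\mid \hat d a = q^4 a\hat d\rangle$.

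To prove this final identity, I would expand $T_n(L)$ with $L = a + a^{-1} + \hat d$ in the PBW basis $\{a^i\hat d^j\}_{i\in\BZ,\,j\geq 0}$ using the Chebyshev recursion $T_{n+1} = L T_n - T_{n-1}$, and show by induction on $n$ that at $n=N$ every coefficient of a mixed monomial $a^i\hat d^j$ with $0 < j < N$ vanishes. The mechanism is already visible at $n=2$: direct computation gives $T_2(L) = a^2 + a^{-2} + \hat d^2 + (1+q^4)\, a\hat d + (1+q^{-4})\, a^{-1}\hat d$, whose cross-coefficients $1+q^{\pm 4} = [2]_{q^{\pm 4}}$ vanish at $\ord(q^4)=2$ by Lemma~\ref{qfacts}(b). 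In general, tracking the recursion against the $q^{\pm 4}$-commutation $\hat d a^{\pm 1} = q^{\pm 4} a^{\pm 1}\hat d$ shows that every mixed-term coefficient at $n=N$ factors through one of the quantum integers $[N]_{q^{\pm 4}}$, both of which vanish at $\ord(q^4) = N$ by Lemma~\ref{qfacts}(b). The main obstacle is precisely this combinatorial bookkeeping: one must verify uniformly that the Chebyshev recursion conspires with the quantum-plane commutations to collect all mixed coefficients into multiples of $[N]_{q^{\pm 4}}$, after which the identity, and hence the lemma, follows.
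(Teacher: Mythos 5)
Part (a) of your proposal is correct and is exactly the paper's argument (the paper simply says it follows from the commutations \eqref{eq.OSL}; your four-case check is the same computation written out). Your reduction in (b) is also essentially the paper's: eliminating $bc$ from \eqref{eq.adda} gives $da=q^4ad+(1-q^4)$, and your substitution $\hat d=d-a^{-1}$ in $\OSL[a^{-1}]$ is the paper's embedding $a\mapsto x$, $d\mapsto x^{-1}+y$ of the subalgebra $A=\cR\langle a,d\rangle/(q^2ad-q^{-2}da=q^2-q^{-2})$ into the quantum torus $A'$, written multiplicatively. (Minor point: your route needs injectivity of $\OSL\to\OSL[a^{-1}]$, i.e.\ that $\OSL$ is an Ore domain over $\cR$; the paper avoids this by working with $A$ and justifying injectivity of $A\hookrightarrow A'$ by a leading-term argument. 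Either way this is a non-issue.) Both routes land on the same target identity $T_N(x+x^{-1}+y)=x^N+x^{-N}+y^N$ in $\langle x^{\pm1},y:yx=q^4xy\rangle$ with $\ord(q^4)=N$.

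The genuine gap is in your treatment of that final identity. The paper does not prove it; it cites it as a known ``miraculous cancellation'' (\cite[Eq.~(2)]{Bonahon}, \cite[Corollary 3.2]{LP}), and that citation is carrying the entire weight of part (b). Your proposed induction via $T_{n+1}=LT_n-T_{n-1}$ is not viable as stated: the mixed coefficients of $T_n(L)$ do \emph{not} vanish for $n<N$ (e.g.\ $T_2(L)$ has cross-terms $(1+q^4)xy+(1+q^{-4})x^{-1}y$, nonzero unless $N=2$), so ``all mixed coefficients vanish'' cannot serve as an inductive hypothesis, and the recursion at step $N$ feeds on the nonvanishing mixed terms of $T_{N-1}(L)$ and $T_{N-2}(L)$. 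To make the induction work you must carry an explicit closed formula for \emph{all} coefficients of $T_n(L)$ for every $n$ (they are quantum-binomial-type quantities, not single quantum integers $[N]_{q^{\pm4}}$), and only then specialize $n=N$ and invoke Lemma \ref{qfacts}(b). Producing and verifying that formula is precisely the content of the cited results, so your sketch leaves the hardest step of the lemma unproven. The fix is either to cite the miraculous-cancellation identity as the paper does, or to actually derive the closed formula for $T_n(x+x^{-1}+y)$ before specializing.
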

\begin{proof}
$(a)$ This follows immediately from  the commutations in equation \eqref{eq.OSL}.

(b) From the identities of equation \eqref{eq.adda} we see that
\be q^2 ad - q^{-2} da = q^2 - q^{-2}  \ee  
Consider the following algebras which are closely related to the $q-$boson algebra considered by Kashiwara \cite[Section 3]{Kashiwara},
 \begin{align*}
 A&= \cR\la a, d \ra /(q^2 ad - q^{-2} da = q^2 - q^{-2}) \\
 A'&= \cR \la x^{\pm 1}, y^{\pm 1} \ra /(yx = q^4 xy).
\end{align*}  
There is an algebra embedding from $f:A\embed A'$ given by  $a\to x, d \to x^{-1} +y$. The injectivity of this map can be seen from a lead term argument using the degree of each term.

As $yx = q^4 xy$ and $q^4$ is a root of 1 order $N$, we have, following identity $(c)$ of Lemma \ref{qfacts}, that
\be 
(x+y)^N = x^N + y^N.
\label{eq.q1}
\ee
We then have
\begin{align}
f(a^N+ d^N) &=  x^N + (x^{-1} + y)^N = x^N + x^{-N}+ y^N.  \label{eq.z1}\\
f(T_N(a+d))&= T_N(f(a+d)) = T_N(x+ x^{-1} + y). \label{eq.z2}
\end{align}
It is known that, see \cite[Equation (2)]{Bonahon} or \cite[Corollary 3.2]{LP}, in the quantum torus $A'$, with $q^4$ a root of unity of order $N$, the right hand side of equation \eqref{eq.z1} and the right hand side of equation \eqref{eq.z2} are equal. Hence we have equation \eqref{eq.ad1}.
 \end{proof}
The special case of \eqref{eq.ad1} when $q^{1/2}$ is a roof of odd order is also proved in \cite{KQ}.
 
 \def\fA{\mathfrak A}
\subsection{Open annulus} \label{sec.openannulus}
The open annulus $\fA:= (-1,1) \times S^1$ is a  punctured bordered surface having empty boundary. It is diffeomorphic to  the sphere with 2 points removed, meaning it has two ideal points. The  curve $z=0 \times S^1$ is called the core of $\fA$. The skein algebra $\cS(\fA)$ is isomorphic to $\cR[z]$, the polynomial algebra in $z$. Splitting $\fA$ along an ideal arc, $c$, which connects the two ideal points gives a bigon. The splitting hommorphism is given by
 \be 
 \Theta_c(z)= a+d,
 \label{eq.AB}
 \ee
 where $a$ and $d$ are elements of $\OSL$ which is identified with $\SB$ under the isomorphism defined in equation \eqref{eq.isoOSL}.

\section{The Chebyshev-Frobenius homomorphism}

\subsection{Root of unity}  When $\mathcal{R}=\BC$, the field of complex numbers, and $q^{1/2}=\omega$ is a non-zero complex number, we denote $\cS\MN$ by $\Sx\MN$. If $\omega$ is a root of 1, let $\ord(\omega)$ be the least positive integer $n$ such that $\omega^n =1$.

\def\fr{{\mathrm{fr}}}
\subsection{The main theorem}

Recall the Chebyshev polynomials of type one, $T_n(x) \in \BZ[x]$, are defined recursively as
\begin{align*}
T_0(x)=2, \ \ \ T_1(x)=x, \ \ \ T_n(x) = xT_{n-1}(x)-T_{n-2}(x), \ \ \forall n \geq 2. 
\end{align*}

If $\al$ is an $\cN$-arc or $\cN$-knot in a marked 3-manifold $\MN$, then for every $k\in \BN$ let $\al^{(k)}$ be the disjoint union of $k$ parallel copies of $\al$ where this parallel push-off is done in the direction of the framing of $\al$. Note that the $\cN$-isotopy class of  $\al^{(k)}$ is uniquely determined by $\alpha$.  We will often abuse notation and not distinguish between $\al^{(k)}$ and $[\alpha^{(k)}]$ as an element of the skein module $\Sx\MN$. If $f(x)= \sum c_k x^k\in \BC[x]$ is a polynomial, then we define the threading of $\al$ by $f$ to be the element
$ f^\fr(\al) := \sum _k c_k \al^{(k)}$ in $\Sx\MN$.
If $\beta$ is another one-component $\cN$-tangle and $g=\sum d_j x^j\in \BC[x]$ then we define
$$ f^\fr(\al) \cup g^\fr(\beta)= \sum c_k d_j\,  \al^{(k)} \cup \beta^{(j)}$$
as an element of the skein module $\Sx\MN$. Similarly, we can define $\cup_{i=1}^m (f_i)^\fr(\al_i) \in \Sx\MN$, where each $\al_i$ is a one-component $\cN$-tangle and each $f_i$ is a polynomial.

\begin{theorem}
\label{main}

Suppose
 $\MN$ is a marked 3-manifold and $\omega$ is a complex root of unity. Let $N=\ord(\omega^8)$ and $\eta=\omega^{N^2}$.  
 
(a)  There exists a unique $\BC$-linear map $\Phi_\omega : \cS_\eta(M,\cN) \to \cS_\omega (M,\cN)$ such that for any $\cN$-tangle $\al$, considered as an element of $\cS_\eta(M,\cN)$,  its image $\Phi_\omega(\al)$ is the result of threading each knot component of $\al$ by $T_N$ each $\cN-$arc component by $x^N$.  Meaning explicitly, that for an $\cN$-tangle $\al = a_1 \cup \cdots \cup a_k \cup \al_1 \cup \cdots \cup \al_l$ where the $a_i$ are $\cN$-arcs and the $\al_i$ are $\cN$-knots,
\begin{align}
\Phi_\omega (\al) & = a_1^{(N)} \cup \cdots \cup a_k^{(N)} \cup (T_N)^{\fr}(\al_1) \cup \cdots \cup (T_N)^{\fr}(\al_l) \quad \text{in }\Sx\MN.  \label{eq.defC}
 \end{align}

(b) 
Additionally, we have that $\Phi_\omega $ commutes with the splitting homomorphism, meaning if $(M',\cN')$ is the result of splitting $\MN$ along $(D,a)$, then
 the following diagram commutes:
\be 
\begin{tikzcd}
\mathscr{S}_\eta(M,\cN)\arrow[r,"\Theta_{(D,a)}"]\arrow[d,"\Phi_\omega "] & \mathscr{S}_\eta(M',\cN')\arrow[d,"\Phi_\omega "] \\
\mathscr{S}_\omega (M,\cN)\arrow[r,"\Theta_{(D,a)}"]  & \mathscr{S}_\omega (M',\cN')\\
\end{tikzcd}
\label{eq.dia}
\ee

\end{theorem}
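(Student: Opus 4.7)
The plan is to define $\Phi_\omega$ on the free $\mathbb{C}$-module spanned by stated $\cN$-tangles via the given formula, and then show this descends to the quotient $\cS_\eta(M,\cN)$ by verifying each of the defining relations (A)--(E) of Figure \ref{fig:stated-relations}. The support of each relation is contained in a 3-ball $B\subset M$, disjoint from $\cN$ in the cases of (A) and (B) and meeting $\cN$ in a short arc in the cases of (C)--(E). By the functoriality of Subsection \ref{functoriality}, it therefore suffices to verify each relation in a fixed local model inside $B$.

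For the internal relations (A) and (B), no $\cN$-arc components enter the support, so we reduce to the unmarked 3-manifold setting of the original Chebyshev homomorphism of Bonahon--Wong. Relation (A) follows from the identity expressing the $\eta$-skein relation for a single crossing in terms of the $\omega$-skein relations produced after threading each strand with $N$ parallel copies; this is proved either by the quantum trace method of \cite{BW1} or by the elementary skein argument of \cite{Le2}, the essential ingredient being the $q$-binomial vanishings of Lemma \ref{qfacts}(b) applied with $\ord(\omega^8)=N$. Relation (B) reduces to the classical Chebyshev identity $T_N(-\omega^2-\omega^{-2})=-\omega^{2N}-\omega^{-2N}$, together with the observation that the trivial-knot value $-\eta^2-\eta^{-2}$ in $\cS_\eta$ agrees with $-\omega^{2N}-\omega^{-2N}$ in $\cS_\omega$ exactly when $N=\ord(\omega^8)$.

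For the boundary relations (C), (D), (E), the verification reduces to a computation in the marked bigon. Fix a properly embedded disk $D$ just inside $\pM$ and parallel to the portion of $\pM$ that carries the local marking supporting the relation; together with a piece of $\pM$ the disk $D$ cobounds a 3-ball whose marked-manifold structure is isomorphic to the thickening of the bigon $(\mathfrak{B},\cP)$. Applying the splitting homomorphism of Theorem \ref{SplittingHomo} (together with Remark \ref{notcloseddisk} for the non-closed disk case) and the pseudo-isomorphism of Example \ref{exa.1} reduces each of (C), (D), (E) to an identity in $\cS(\mathfrak{B},\cP)$, which via the isomorphism \eqref{eq.isoOSL} becomes an identity in $\OSL$. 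Under this correspondence $\Phi_\omega$ is the $\mathbb{C}$-algebra endomorphism of $\OSL$ sending the generators $a,b,c,d$ to $a^N,b^N,c^N,d^N$. That this is well defined follows from Lemma \ref{qfacts}(c) applied to the $q$-commutations \eqref{eq.x1}, together with the key identity $T_N(a+d)=a^N+d^N$ of Lemma \ref{r.ad1}(b); each boundary defining relation then translates into an explicit algebraic identity in $\OSL$ that is verified directly.

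For part (b), compatibility with the splitting homomorphism is checked by unwinding the definitions. Representing $\al$ in $(D,a)$-transverse position, each parallel pushoff $\al^{(k)}$ is again $(D,a)$-transverse and meets $a$ in $k|\al\cap a|$ points; splitting commutes with taking parallels at the level of tangle diagrams, and the sum over all state assignments at the newly created boundary points rearranges identically on the two sides of diagram \eqref{eq.dia}. The main obstacle in the proof is the boundary state-exchange relation (E), where the arithmetic role of $N=\ord(\omega^8)$ interacts non-trivially with the boundary states; its verification is the heart of the argument, and the bigon reduction together with the identity $T_N(a+d)=a^N+d^N$ is precisely what makes it go through.
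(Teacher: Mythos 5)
Your overall skeleton (define $\tilde\Phi_\omega$ on the free module of $\cN$-tangles, localize each defining relation to its support via functoriality, and check the relations one by one) matches the paper, but there are two genuine gaps. First, your reduction of relation (A) to the unmarked Bonahon--Wong case is wrong: the support ball $B$ is disjoint from $\cN$, but the strands passing through $B$ may well belong to $\cN$-\emph{arcs}, which are threaded by $x^N$ rather than $T_N$. This is precisely where the new content of the theorem lies, and the paper spends most of its effort there: it separates the cases where the strands at the crossing belong to two arcs, to one arc, or to an arc and a knot, reducing them to the ideal square, the punctured bigon, and the marked annulus respectively. The core computation is the identity $X^{(m)}=\sum_j q^{m^2-4mj+2j^2}{m\brack j}_{q^4}X_+^{(m-j)}X_-^{(j)}$ in the square, whose cross terms are killed at a root of unity by Lemma \ref{qfacts}(b); nothing in \cite{BW1} or \cite{Le2} covers this. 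Your proposal simply has no argument for the skein relation applied to arc components. (Relatedly, your claim that relation (E) is ``the heart of the argument'' is a misdiagnosis: in the paper that verification is a one-line count of $N^2$ crossings giving $\omega^{2N^2}=\eta^2$, and relations (C), (D) are likewise short height-exchange computations rather than reductions to an algebraic identity in $\OSL$ --- indeed one cannot verify that a defining relation is preserved by working in the algebra that is already the quotient by that relation.)

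Second, part (b) is not ``checked by unwinding the definitions.'' On the $\Theta\circ\Phi_\omega$ side one sums over all $2^{N|\al\cap a|}$ state assignments to the $N|\al\cap a|$ new endpoints, while on the $\Phi_\omega\circ\Theta$ side one only obtains the $2^{|\al\cap a|}$ ``diagonal'' assignments in which all $N$ parallel copies of a given intersection point carry the same state. The off-diagonal terms must vanish, and this is exactly the quantum freshman's dream $(\al_++\al_-)^N=\al_+^N+\al_-^N$ for arcs (using $\al_-\al_+=\omega^8\al_+\al_-$ and $\ord(\omega^8)=N$) and the identity $T_N(a+d)=a^N+d^N$ of Lemma \ref{r.ad1}(b) for knots. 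You cite the latter identity but attach it to the wrong relation; in the paper it is the content of Lemma \ref{r.comm}, which is then bootstrapped from the single-intersection case to the general case by puncturing the splitting arc (Lemma \ref{r.cutk}). As written, your part (b) asserts as a formality precisely the root-of-unity phenomenon that has to be proved.
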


We call $\Phi_\omega$ the Chebyshev-Frobenius homomorphism. For a brief history see Subsections  \ref{CHomoIntro} and \ref{CFHomoIntro}.

We first prove a few technical lemmas, before giving a proof of Theorem \ref{main} in Subsection \ref{sec.proof}.

\subsection{Setting}

Let $\tSe\MN$ be the $\BC$-module freely generated by $\cN$-tangles, noting that this is the set of tangles and not isotopy classes of tangles.
The kernel 
of the projection $\tSe\MN \onto \Se\MN$ is the $\BC$-subspace of $\tSe\MN$ generated 
$\al-\beta$ where $\al$ and $\beta$ are either related by a $\cN$-isotopy or are
 two sides of one of the defining relations seen in Figure \ref{fig:stated-relations}.

Define the $\BC$-linear map
$\tP_\omega : \tSe\MN \to \cS_\omega (M,\cN)$ such that if $\al$ is a stated $\cN$-tangle then $\tP_\omega (\al)$ is  the right hand side of \eqref{eq.defC}. By this definition $\tP_\omega(\al)$ depends only on the $\cN$-isotopy class of $\al$.
We will show that $\tP_\omega $ descends to a $\BC$-linear map
$\Phi_\omega : \cS_\ve(M,\cN) \to \cS_\omega (M,\cN)$ by proving that if $\al$ and $\beta$ are two sides of one of the defining relations seen in Figure \ref{fig:stated-relations} 
then 
\be 
\tP_\omega(\al) = \tP_\omega(\beta). \label{eq.premain}
\ee

\def\pfS{\partial \fS}

\subsection{Technical lemmas}

We first prove a special case the commutativity of Diagram~\eqref{eq.dia}.
\begin{lemma} \label{r.comm}
Suppose $D$ is a disk properly embedded into a marked 3-manifold $\MN$ and $D$ contains an open interval $u$. Let  $\al$ be a stated $\cN$-tangle which is $(D,u)$-transverse, such that $\al$ intersects $u$ in exactly one point. Let $(M',\cN')$ be the result of splitting $\MN$ along $(D,u)$. For $\nu\in \{\pm \}$ let $\al_\nu$ be the lift of $\al$, with both newly created boundary points having state $\nu$. Then
\be \tP_\omega(\al_+ + \al_-) = \Theta_c (\tP_\omega(\al)).
\label{eq.t1}
\ee

\end{lemma}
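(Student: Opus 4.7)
The plan is to split the argument into two cases according to whether the component of $\al$ meeting $u$ is an $\cN$-knot or an $\cN$-arc, and in each case to reduce the identity to a local computation in a model marked $3$-manifold. The remaining components of $\al$ are disjoint from $D$, so $\Theta_c$ leaves them fixed and $\tP_\omega$ threads them componentwise; by the disjoint union isomorphism for stated skein modules (Subsection \ref{functoriality}) they factor cleanly out of both sides of \eqref{eq.t1}, so I will assume throughout that $\al$ is the single component through $u$.

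For the knot case, with $K$ the knot component, the local model I would use is the thickening $\widetilde{\fA}$ of the open annulus $\fA$ of Subsection \ref{sec.openannulus}, embedded as a tubular neighborhood of $K$ in $M$ so that the core circle of $\fA$ maps to $K$ and the thickening of an ideal arc $c$ of $\fA$ crossing the core once maps onto the relevant portion of $D$. Under $\cS_\omega(\fA)\cong\BC[z]$, $\cS_\omega(\fB)\cong\OSL$ via \eqref{eq.isoOSL}, and using \eqref{eq.AB}, we have $\Theta_c(z) = a+d$, so Lemma \ref{r.ad1}(b) gives
\[
\Theta_c\bigl(T_N^{\fr}(z)\bigr) \;=\; T_N\bigl(\Theta_c(z)\bigr) \;=\; T_N(a+d) \;=\; a^N + d^N.
\]
On the other hand, the two lifts $K_+$ and $K_-$ correspond in the bigon to the arcs $a$ and $d$, so $\tP_\omega(K_+) + \tP_\omega(K_-) = a^N + d^N$ in the local model, and \eqref{eq.t1} transports to $\cS_\omega(M',\cN')$ by functoriality.

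For the arc case, with $\beta$ the arc component, the local model I would use is the thickening $\widetilde{\fB}$ of the bigon, embedded so that the core arc of $\fB$ carries the segment of $\beta$ containing $\beta\cap u$ and the thickened ideal arc joining the two ideal points of $\fB$ carries the portion of $D$ near $u$. Under $\cS_\omega(\fB)\cong\OSL$, the arc $\beta$ corresponds to one of the generators $x\in\{a,b,c,d\}$ (determined by its boundary states), threading by $x^N$ becomes the $N$th algebra power, and $\Theta_c$ becomes the coproduct $\Delta$. Writing $\Delta(x) = x_+ + x_-$ as in Lemma \ref{r.ad1}(a), that lemma gives $x_- x_+ = q^4 x_+ x_-$, and since $N = \ord(\omega^8) = \ord(q^4)$, Lemma \ref{qfacts}(c) yields
\[
\Theta_c(x^N) \;=\; \Delta(x)^N \;=\; (x_+ + x_-)^N \;=\; x_+^N + x_-^N,
\]
whose two summands are exactly $\tP_\omega(\beta_+)$ and $\tP_\omega(\beta_-)$ in the bigon model, giving \eqref{eq.t1} locally and then transferring via functoriality.

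The principal obstacle I anticipate is making the functoriality step rigorous: in each case one must produce an embedding of the model marked $3$-manifold ($\widetilde{\fA}$ or $\widetilde{\fB}$) into $\MN$ that sends the core tangle onto the relevant part of $\al$ and the model's splitting disk onto a sub-disk of $D$ containing $u$, with the markings matched compatibly, and then verify that $\tP_\omega$ and $\Theta_c$ are natural under this embedding. Pseudo-isomorphisms in the style of Example \ref{exa.1} should allow the boundary of the model to be trimmed so that its image fits the definitional conventions. Once the reduction is justified, the two algebraic identities above complete the argument.
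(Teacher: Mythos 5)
Your proposal is correct and follows essentially the same route as the paper: reduce to the single component meeting $u$, model the arc case on the thickened bigon using $\Delta(x)=x_++x_-$ with $x_-x_+=q^4x_+x_-$ and Lemma \ref{qfacts}(c), and model the knot case on the thickened open annulus using $\Theta_c(z)=a+d$ and Lemma \ref{r.ad1}(b). The functoriality/pseudo-isomorphism reduction you flag as the main obstacle is exactly how the paper handles it.
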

\begin{remark}Since $\Theta_c([\al])= [\al_+] + [\al_-]$, it is tempting to write the left hand side of \eqref{eq.t1} as $\tP_\omega(\Theta_c(\al))$. As $\tP_\omega$ is defined on tangles themselves and not equivalence classes $\tP_\omega(\Theta(\alpha))$ is not well defined.
\end{remark}

\def\pfB{\partial \fB}
\begin{proof} 
It is enough to consider the case when $\al$ has one component.  There are two cases which we will address separately: (i) $\al$ is an $\cN$-arc, and (ii) $\al$ is an $\cN$-knot.

(i) Assume $\al$ is a $\cN$-arc. A small tubular open neighborhood of $\al \cup D$ in $M$ is pseudo-isomorphic to 
the thickening of a marked bigon (see Subsection \ref{sec.bigon}), with $\al$ being identified with the core of the bigon and $D$ being the thickening of an ideal arc connecting the two ideal points of the bigon. Then utilizing functoriality we may assume, without loss of generality, from the outset that $\MN$ is the thickening of the marked bigon $\fB$, $\al$ is the core of the bigon, and $D$ is the thickening of an ideal arc connecting the two ideal points.

 Under the identification of $\SB$ with $\OSL$ by equation \eqref{eq.isoOSL}, the $\cN$-tangle $\al$, depending on the states at the end points, becomes an element  $ x\in \{a,b,c,d\}$. Moreover, $\al_+$ and $\al_-$ are exactly the two terms on the right hand side of the formula of $\Delta(x)$ given by equation \eqref{eq.Delta}.
 The left hand side of equation \eqref{eq.t1}  is
\begin{align}
\tP_\omega(\al_+ + \al_-)&= (\al_+)^N + (\al_-)^N, \label{t.1al}
\end{align}
 while the right hand side of equation \eqref{eq.t1} is
\be \Theta_c (\tP_\omega(\al))= (\al_+\, +\, \al_-)^N. \label{t.1ar} \ee
Then by equation \eqref{eq.x1} we have $\al_- \al+ = \omega^8 \al_+ \al_-$. With this commutation and $\ord(\omega^8)=N$, we have from equation \eqref{eq.q1} that $(\al_+\, +\, \al_-)^N=(\al_+)^N + (\al_-)^N$. This proves the Lemma in case (i).

(ii) Let $\al$ be a $\cN-$knot. A small tubular neighborhood of $\al\cup D$ is pseudo-isomorphic to the thickening of an open annulus (see Subsection \ref{sec.openannulus}) with $\al$  being identified with the core $z$ of the open annulus, and $D$ being identified with the thickening of an ideal arc connecting the two ideal point. Then utilizing functoriality we may assume, without loss of generality, that $\MN$ is equal to this neighborhood with the above identifications. The splitting of $\MN$  along $D$ is then the thickening of the bigon  $\fB$.
With the identification of $\SB$ with $\OSL$ by the isomorphism of equation \eqref{eq.isoOSL}, we have $\al_+=a$ and $\al_-= d$.
The left hand side of equation \eqref{eq.t1} is then
\be 
\tP_\omega(\al_+ + \al_-) =  a^N + d^N. \label{t.1bl}
 \ee

Now as $\Theta_c$ is an algebra homomorphism,  the right hand side of  of equation
\eqref{eq.t1} is
\be 
\Theta_c (\tP_\omega(\al))= \Theta_c (T_N(\al)) = T_N( \Theta_c (\al)  )= T_N (a+d). \label{t.1br}
 \ee
By Lemma \ref{r.ad1}(b), when $\ord(\omega^8)=N$, the right hand sides of \eqref{t.1bl} and \eqref{t.1br} are equal. Thus, we have \eqref{eq.t1}.
\end{proof}

\def\SS{\mathfrak{Q}}

\subsection{The square}
In the next three lemmas we prove special cases of Identities \eqref{eq.premain}.

Recalling the correspondence described in full in Subsection \ref{sec.equiv}, we say a punctured bordered surface $\fS$ corresponds to the marked surface $\SM$, where $\cP$ consists of one point on each boundary edge of $\fS$. By identifying  $\tilde \cP$-isotopy classes of $\tilde \cP$-tangles with $\tpfS$-isotopy classes of $\tpfS$-tangles (as in Subsection \ref{sec.equiv}), for every $\tpfS$-tangle, and hence for every $\pfS$-tangle diagram $\al$, we can define $\tPhi_\omega(\al) \in \cS_\omega(\fS)= \cS_\omega(\fS, \cP)$.

The ideal square $\SS$  is the punctured bordered surface obtained from the closed disk by removing four boundary points, which we view as a square without its vertices.

Let $X$ be a stated $\partial \SS$-tangle diagram in $\SS$ consisting of 2 arcs as in Figure \ref{fig:square}, with arbitrary states on the boundary. Let $X_+$ (respectively $X_-$) be result of the positive (respectively negative) resolution of the only crossing of $X$.

\begin{figure}[htpb]
    \centering
    \includegraphics[scale=.35]{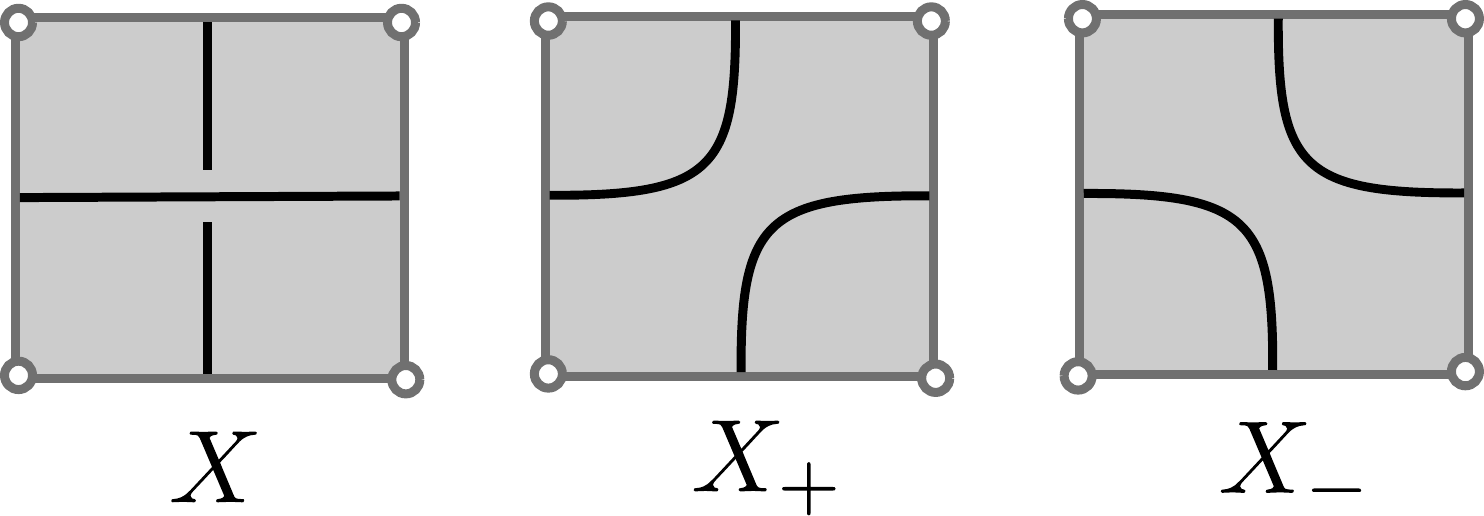}
    \caption{The ideal square and stated tangles $X, X_+, X_-$ on it. The states of $X,X_+, X_-$ are the same on each boundary component.}
    \label{fig:square}
\end{figure}

\begin{lemma}
\label{Square} In $\Sx(\SS)$ one has
\be 
 \tP_\omega  (X)   =\eta ^{2}\, \tP_\omega  (X_+) +\eta ^{-2}\, \tP_\omega  (X_-).
 \label{eq.s00}
\ee
\end{lemma}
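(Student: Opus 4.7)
My plan is to compute $\tP_\omega(X) \in \Sx(\SS)$ by expanding the $N$-fold parallel of $X$. Since $X$ consists of two arcs meeting at a single transverse crossing, its $N$-fold parallel creates an $N \times N$ grid of $N^2$ crossings at the center of the square, with each of the $N$ parallel strands of the first arc meeting each of the $N$ parallel strands of the second arc exactly once. Applying the Kauffman skein relation $\omega^2(\text{positive}) + \omega^{-2}(\text{negative})$ at each of these $N^2$ crossings expands $\tP_\omega(X)$ as a sum of $2^{N^2}$ planar stated tangles in $\Sx(\SS)$.

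The two extreme resolution patterns immediately yield the desired right-hand side. Resolving all $N^2$ crossings positively produces the $N$-parallel of $X_+$, namely $\tP_\omega(X_+)$, with scalar coefficient $(\omega^2)^{N^2} = \omega^{2N^2} = \eta^2$; symmetrically, the all-negative pattern gives $\eta^{-2}\tP_\omega(X_-)$. The heart of the proof is therefore to show that the remaining $2^{N^2} - 2$ mixed-resolution patterns cancel to zero in $\Sx(\SS)$.

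For the cancellation, I plan to reduce to a bigon identity in the spirit of Lemma~\ref{r.comm}. A small tubular neighborhood of the two arcs of $X$ together with the crossing region in the thickened square is pseudo-isomorphic, in the sense of Example~\ref{exa.1}, to the thickening of a marked bigon. Under the isomorphism of \eqref{eq.isoOSL}, the two resolutions $X_+$ and $X_-$ are identified with suitable generators of $\OSL$, and they satisfy a commutation of the form $X_- X_+ = \omega^8 X_+ X_-$ as in Lemma~\ref{r.ad1}(a). The $N$-fold parallel of the original crossing then translates, inside this bigon neighborhood, into the $N$-th power of $\omega^2 X_+ + \omega^{-2} X_-$. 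Because $N = \ord(\omega^8)$, Lemma~\ref{qfacts}(b)--(c) forces the quantum binomial coefficients $\binom{N}{k}_{\omega^8}$ to vanish for $0 < k < N$, so only the two extreme powers $X_+^N$ and $X_-^N$ survive, with scalar coefficients $\omega^{2N^2} = \eta^2$ and $\omega^{-2N^2} = \eta^{-2}$ respectively, matching $\tP_\omega(X_\pm)$ under the identification of $N$-parallel with $N$-stacking.

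The main obstacle is implementing the pseudo-isomorphism carefully: the square has four boundary edges while the bigon has only two, so one must track states and heights precisely as the crossings get resolved. I expect this can be arranged by localizing the skein computation in a small ball around the central crossing (where only the two crossing arcs appear and no boundary edge of $\SS$ intervenes), and then transporting the resulting identity back to $\Sx(\SS)$ by functoriality. Once this localization is in place, the bigon identity of Lemma~\ref{r.ad1} and the quantum binomial collapse of Lemma~\ref{qfacts} conclude the proof.
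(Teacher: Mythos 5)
Your setup (expanding the $N^2$ crossings of $X^{(N)}$ and isolating the all-positive and all-negative resolutions with coefficients $\eta^{\pm 2}$) matches the paper's strategy, but the mechanism you propose for killing the $2^{N^2}-2$ mixed terms has a genuine gap. First, the claimed pseudo-isomorphism is wrong: a regular neighborhood of the two crossing arcs of $X$ in the thickened square is a disk meeting the boundary in \emph{four} intervals, i.e.\ it is pseudo-isomorphic to the thickening of the square $\SS$ itself, not of a bigon. Lemma \ref{r.comm}(i) applies to a single arc crossing a splitting disk once; here there is no splitting disk and there are two arcs crossing each other, so there is no identification of $X_\pm$ with generators of $\OSL$ and no a priori commutation $X_-X_+=\omega^8 X_+X_-$ imported from Lemma \ref{r.ad1}(a). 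Second, and more fundamentally, localizing to a small ball around the central crossing cannot possibly detect why the mixed resolutions vanish: inside such a ball a mixed resolution is a perfectly nonzero planar tangle. The vanishing happens only globally in $\SS$, because a mixed resolution of the $N\times N$ grid produces an arc returning to the same boundary edge, and such an arc is zero in $\cS(\SS)$ only because \emph{all} endpoints on that boundary edge carry the same state, so that height exchange moves followed by the trivial arc relations apply. This is boundary information invisible in any neighborhood of the crossing alone.

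The paper's proof works directly in $\cS(\SS)$ and replaces your one-shot binomial collapse by an induction on the number of parallel copies: resolving the $(1,1)$-crossing of $X^{(k+1)}$ and then consecutively resolving the top row and the outer column (discarding returning arcs, which vanish by the observation above) yields $X^{(k+1)}=q^{2k+1}X_+\cdot X^{(k)}+q^{-2k-1}X^{(k)}\cdot X_-$, from which the identity $X^{(m)}=\sum_j q^{m^2-4mj+2j^2}{m\brack j}_{q^4}X_+^{(m-j)}X_-^{(j)}$ follows over any ground ring; only at the end does one specialize $q^{1/2}=\omega$ and use Lemma \ref{qfacts}(b). Note also that the framed parallel $X^{(m)}$ is not the algebra power $(qX_++q^{-1}X_-)^m$, so even granting a $q$-commutation of $X_+$ and $X_-$ you could not simply invoke Lemma \ref{qfacts}(c); the extra powers of $q$ relating framed and algebra powers are exactly what the paper's induction keeps track of. To repair your argument you would need to (i) take the square, not the bigon, as the local model, and (ii) prove the vanishing of mixed resolutions via the returning-arc argument at the boundary of $\SS$ --- at which point you have reconstructed the paper's proof.
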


\begin{proof} 
We proceed by induction to prove that in $\mathscr{S}(\SS)$, for any ground ring $\cR$ containing an invertible $q^{1/2}$ and any $m\in \BN$, we have 
\be 
X^{(m)}= \sum_{j=0}^m q^{m^2-4mj+2j^2}{m\brack j}_{q^4} X_+^{(m-j)}X_-^{(j)}.
\label{eq.qq}
\ee

  The base case of $m=1$ is 
   $X = q X_+ + q^{-1} X_-$, which follows immediately from the skein relation $(A)$ of Figure \ref{fig:stated-relations}.  Now assume the formula holds true for a fixed $k$. The diagram of $X^{(k+1)}$ is presented by a grid as in Figure \ref{fig:squareinducthyp}, and the crossings will be parameterized similarly to entries of a $(k+1) \times (k+1)$ matrix. For example, the upper left crossing is the $(1,1)$-crossing. 
  Observe that all endpoints on a given boundary edge have identical states. Under these conditions, it follows from a combination of height exchange moves and the trivial arc relation that any tangle diagram having an arc with both endpoints on the same boundary is 0 in $\cS(\SS)$.  Note that this fact requires that all of the states on a given boundary edge are identical, and not just that the endpoints of the returning arc have equal states.
  
  \begin{figure}[htpb]
    \centering
    \includegraphics[scale=.35]{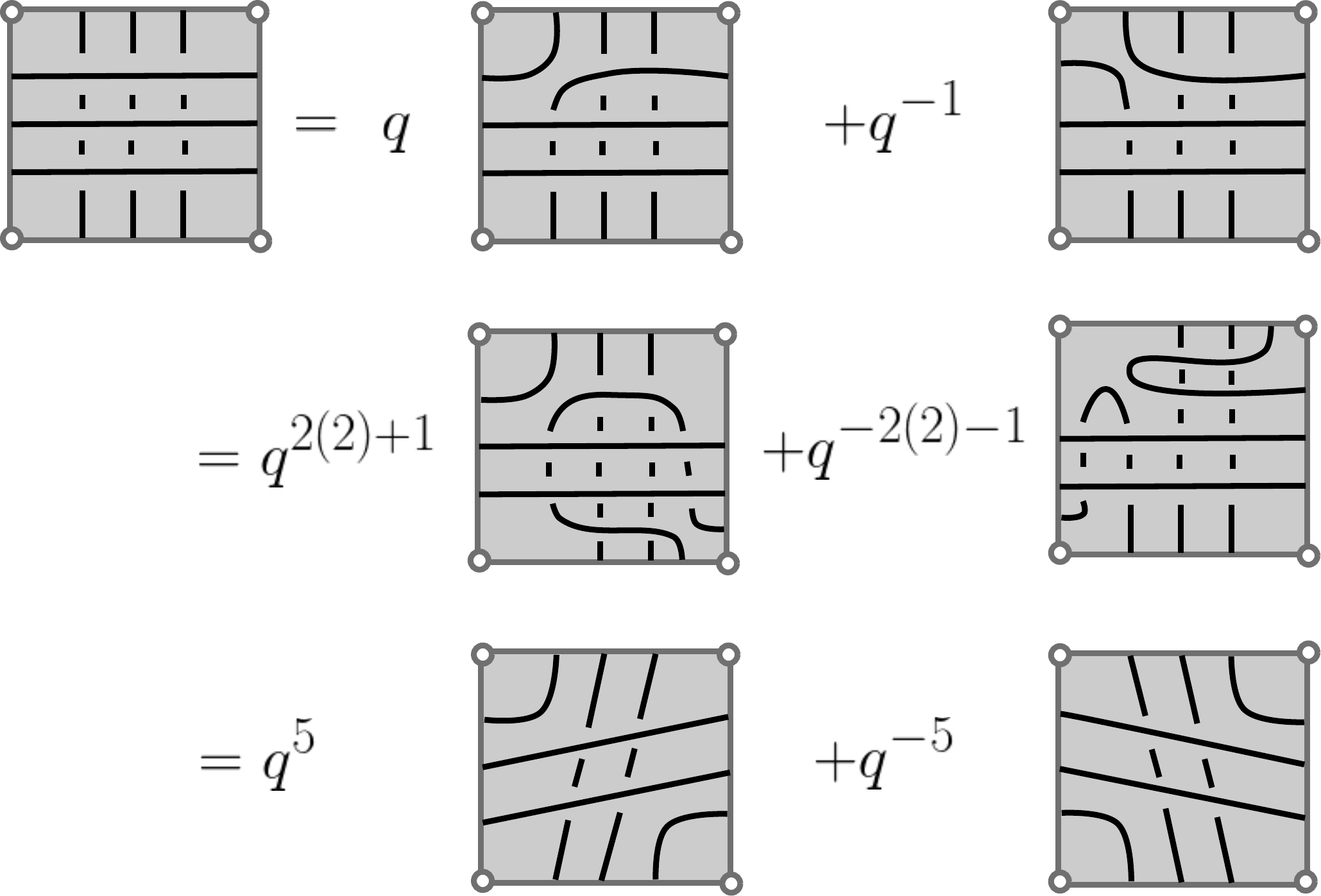}
    \caption{A depiction of the inductive hypothesis when decomposing $X^{(3)}$.}
    \label{fig:squareinducthyp}
\end{figure}

  Now define $h_+$ and $h_-$ be the tangle diagrams obtained as positive and negative resolutions, respectively, of the $(1,1)$-crossing in the diagram $X^{(k+1)}$.  Then by definition, the skein relation applied to the $(1,1)$-crossing of $X^{(k+1)}$ gives
  $$ X^{(k+1)} = q h_+ + q^{-1} h_-.$$
 Looking at consecutive resolutions in $h_+$ along the top row of crossings, meaning the $(1,2)$-crossing through the $(1,k+1)$-crossing, we see that each negative resolution would gives a trivial arc with the bottom boundary edge, meaning the only nonzero diagram occurs when all crossings are resolved positively. Then following a similar argument if we consecutively resolve the crossings down the rightmost column, meaning the $(2,k+1)$-crossing to the $(k+1,k+1)$-crossing, we have the only nonzero diagram occurs when each of these crossings are resolved positively. Together this implies
  $$ h_+= q^{2k}X_+\cdot X^{(k)}\in\cS(\SS).$$

  Similarly, a mirrored argument on $h_-$, meaning consecutively resolving the top row of crossings and the left-most column of crossings, gives that the only nonzero diagram occurs when each crossing is resolved negatively. Hence,
  $$ h_-=q^{-2k}X^{(k)}\cdot X_-\in\cS(\SS).$$
  Taken together we have
\be \label{eq.inductsep} X^{(k+1)}=q^{2k+1}X_+\cdot X^{(k)} + q^{-2k-1}X^{(k)}X_-,
\ee
which allows us to proceed with our inductive step.

\no{
\begin{figure}[htpb]
    \centering
    \includegraphics[scale=.45]{squareinductnew.pdf}
    \caption{An illustration of the inductive hypothesis applied to the case $k=2$ and resolving the top left crossing}
    \label{fig:squareinducthyp}
\end{figure}
 }

Then applying our inductive hypothesis of equation \ref{eq.qq} to the right hand side of equation \ref{eq.inductsep} we have
$$q^{2k+1}(\sum_{i=0}^{k}q^{k^2-4ki+2i^2}{k \brack i}_{q^4}X_+^{(k+1-i)}X_-^{(i)})+q^{-2k-1}(\sum_{i=0}^{k}q^{k^2-4ki+2i^2}{k \brack i}_{q^4}X_+^{(k-i)}X_-^{(i+1)})$$

Combining like terms gives

\begin{align*}
&q^{(k+1)^2}X_+^{(k+1)}+(\sum_{i=1}^kq^{(k+1)^2-4(k+1)i+2i^2}(q^{4i}{k \brack i}+{k \brack i-1})X_+^{(k+1-i)}X_-^{(i)})+q^{-(k+1)^2}X_-^{(k+1)}\\
\end{align*}
Then applying the the $q-$binomial identity from part $(b)$ of Lemma \ref{qfacts} simplifies to
\begin{align*}
&=\sum_{i=0}^{k+1}q^{(k+1)^2-4(k+1)i+2i^2}{k+1 \brack i}X_+^{(k+1-i)}X_-^{(i)},
\end{align*}

which completes the proof that equation \eqref{eq.qq} holds in $\cS(\SS)$.  

When $q^{1/2}=\omega$, the element $q^4=\omega^8$ is a root of 1 of order $N$. Then from part $(a)$ of Lemma \ref{qfacts} we have that $q^{m^2-4mj+2j^2}{m\brack j}_{q^4}=0$ unless $j=0$ or $j=N$. Applying this to equation \eqref{eq.qq} gives
\[X^{(N)}=\omega^{N^2}X_+^{(N)}+\omega^{-N^2}X_-^{(N)},\]
and our result follows as desired.
\end{proof}

\def\BB{\mathring\fB}

\subsection{The punctured bigon}
The {\em punctured bigon} $\BB$ is the result of removing an interior point from the bigon.
Let $Y$ be the  stated $\partial \BB$-tangle diagram on $\BB$  as in Figure \ref{fig:pbigon}, and $Y_+$ (respectively $Y_-$) be the result of the positive (respectively negative) resolution of the only crossing of $Y$. 

\begin{figure}[htpb]
    \centering
    \includegraphics[scale=.35]{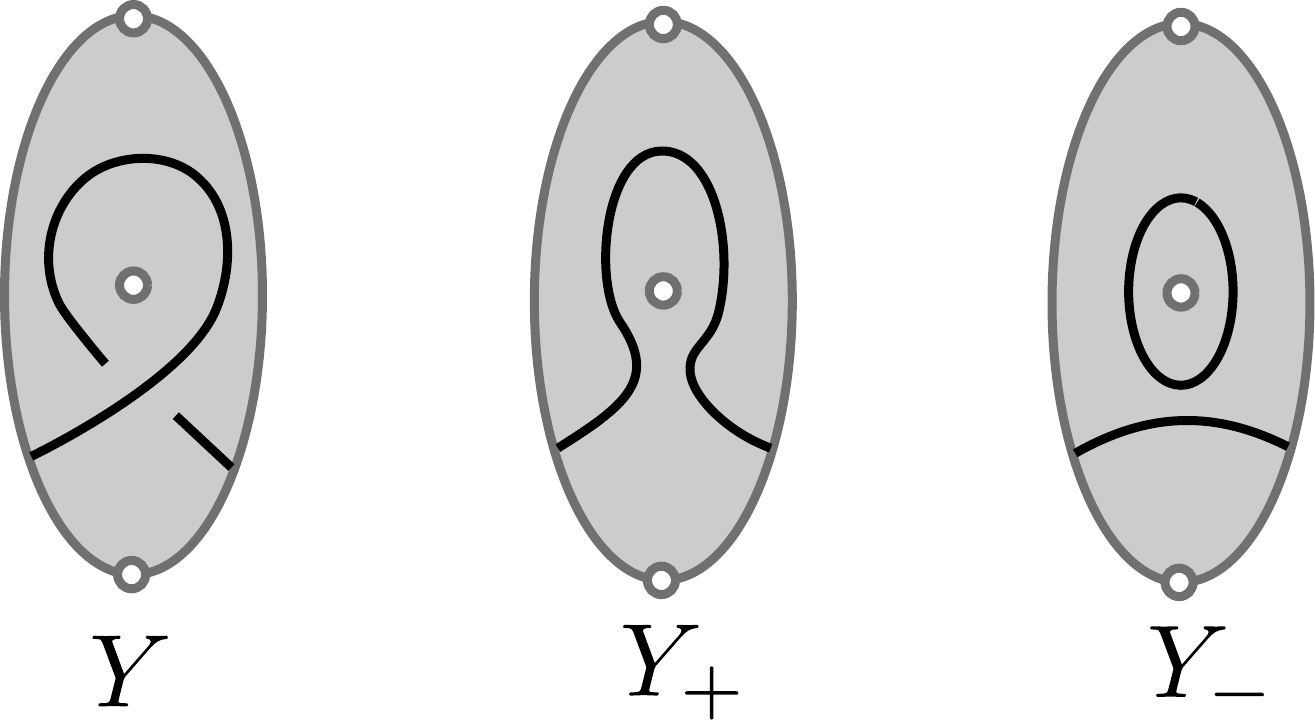}
    \caption{The punctured bigon $\BB$  and the stated tangles $Y, Y_+, Y_-$.}
    \label{fig:pbigon}
\end{figure}

\begin{lemma}
\label{pbigon} In $\Sx(\BB)$ one has
\be 
 \tP_\omega  (Y)   =\eta ^{2}\, \tP_\omega  (Y_+) +\eta ^{-2}\, \tP_\omega  (Y_-).
 \label{eq.s0}
\ee
\end{lemma}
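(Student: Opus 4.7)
The plan is to reduce Lemma \ref{pbigon} to Lemma \ref{Square} via the splitting homomorphism. Let $c$ be an ideal arc in $\BB$ connecting the interior puncture to one of the ideal vertices of the bigon. A direct topological check shows that splitting $\BB$ along $c$ produces the ideal square $\SS$: the cut arc becomes two new boundary edges meeting at a single ideal point (the image of the puncture), the endpoint vertex of $c$ on the bigon splits into two ideal points, and combined with the other bigon vertex and the two original boundary edges, the resulting surface has exactly four boundary edges and four ideal points.

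Next, choose representatives of $Y, Y_+, Y_-$ disjoint from $c$; this is possible since the crossing region of $Y$ may be arranged in $\BB$ away from $c$ (which can also be perturbed) because $Y$ is compactly supported and the puncture is a single point. Under the splitting homomorphism $\Theta_c : \cS_\omega(\BB) \to \cS_\omega(\SS)$, each such tangle has a unique lift with no new states to sum over, so $\Theta_c$ simply reinterprets these tangles as living in $\SS$. With a suitable identification of boundary edges and states, the images of $Y, Y_+, Y_-$ become $X, X_+, X_-$ of Lemma \ref{Square}. Since the cable $\alpha^{(N)}$ of any tangle $\alpha$ disjoint from $c$ is built from parallel push-offs that remain disjoint from $c$, the maps $\tPhi_\omega$ and $\Theta_c$ commute on such tangles, with both compositions producing the same cable viewed in $\SS$.

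Applying $\Theta_c$ to both sides of the desired identity and using Lemma \ref{Square} in $\cS_\omega(\SS)$ then yields
\[\Theta_c(\tPhi_\omega(Y)) = \eta^2 \Theta_c(\tPhi_\omega(Y_+)) + \eta^{-2} \Theta_c(\tPhi_\omega(Y_-)).\]
Since $\Theta_c$ is an algebra embedding by Theorem \ref{surfacesplit}, this identity lifts to the required identity in $\cS_\omega(\BB)$.

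The main obstacle is the topological verification in the second step: confirming that $Y$ admits a representative disjoint from a suitable ideal arc $c$, and that the images $\Theta_c(Y), \Theta_c(Y_+), \Theta_c(Y_-)$ in $\SS$ can be identified with $X, X_+, X_-$ of Lemma \ref{Square}. This requires matching Figure \ref{fig:pbigon} against Figure \ref{fig:square} with careful attention to boundary-edge labelings and states, but is otherwise routine.
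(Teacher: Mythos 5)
There is a genuine gap, and it sits exactly at the step you flag as "routine": the tangle $Y$ cannot be isotoped off the cutting arc $c$. The diagram $Y$ is a single arc that winds around the interior puncture of $\BB$ and crosses itself once (this is the configuration needed in the proof of Theorem \ref{main}, case (ii) of the skein relation, where a neighborhood of an arc with one self-crossing is the punctured bigon). Any ideal arc $c$ whose splitting turns $\BB$ into the ideal square $\SS$ must run from the interior puncture to a boundary ideal point, and an arc encircling that puncture necessarily meets $c$; in fact $Y$ meets $c$ in exactly one point. This is not an inconvenience to be isotoped away but the whole mechanism of the reduction: cutting the single self-crossing arc $Y$ at that one point is what produces the \emph{two-arc} configuration $X$ of Figure \ref{fig:square}. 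If $Y$ really were disjoint from $c$, its image in $\SS$ would still be a single arc with a self-crossing, which is not the tangle $X$ of Lemma \ref{Square}, so that lemma would not apply.

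Because $|Y \cap c| = 1$, the splitting homomorphism produces two lifts $X_1, X_2$ (one for each state at the new boundary point), and the identity you need is $\Theta_c(\tP_\omega(Y)) = \tPhi_\omega(X_1) + \tPhi_\omega(X_2)$. This is precisely Lemma \ref{r.comm}, whose proof is the nontrivial algebraic heart of the reduction: it rests on the commutation $\al_- \al_+ = \omega^8 \al_+\al_-$ in $\OSL$ and the resulting identity $(\al_+ + \al_-)^N = \al_+^N + \al_-^N$ when $\ord(\omega^8)=N$. Your proposal replaces this with the trivial observation that cabling commutes with cutting for tangles disjoint from $c$, which never arises here. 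The rest of your outline (apply Lemma \ref{Square} to each lift, then use injectivity of $\Theta_c$ from Theorem \ref{surfacesplit} to descend) matches the paper, but the proof cannot close without Lemma \ref{r.comm} or an equivalent computation.
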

\begin{proof}

Let $c$ be the ideal arc connecting the interior ideal point and the top ideal point,  as seen in Figure~\ref{fig:pbigon2}. 
\begin{figure}[htpb]
    \centering
    \includegraphics[scale=.35]{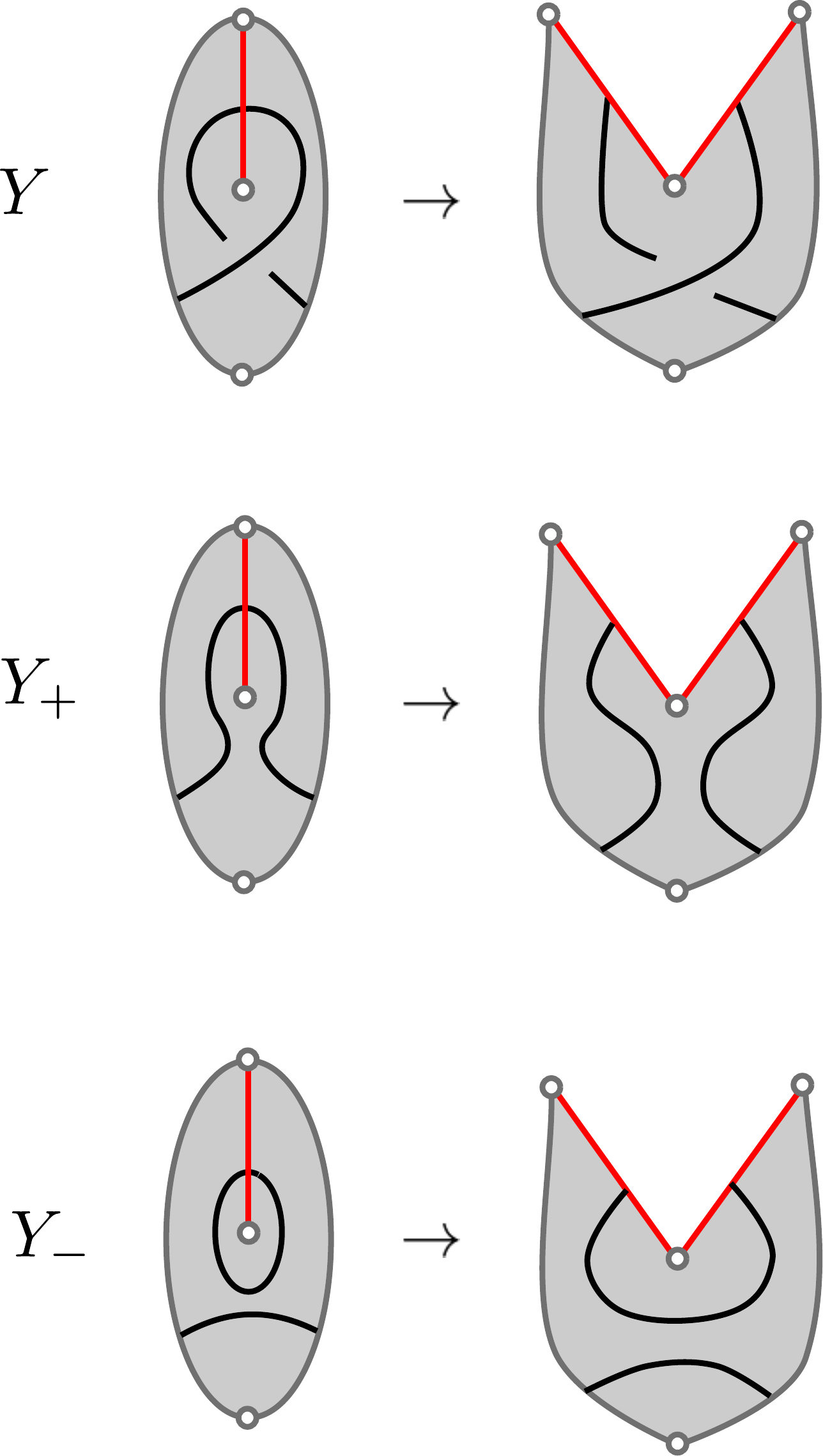}
    \caption{Cutting along the ideal arc $c$}
    \label{fig:pbigon2}
\end{figure}

The result of splitting $\BB$ along $c$ is the ideal square $\SS$. The two lifts of $Y$ are stated $\partial \SS$-tangle diagrams $X_1$ and $X_2$, each of which is an appropriately stated copy of $X$ from Figure \ref{fig:square}. Similarly, the two lifts of $Y_+$ are $X_{+,1}$ and $X_{+,2}$, each of which is a stated copy of $X_+$, and the two  lifts of $Y_-$ are $X_{-,1}$ and $X_{-,2}$, each of which is a stated copy of $X_-$.

 Since 
the splitting homomoprhism $\Theta_c: \Sx(\BB)\embed \Sx(\SS)$ is an algebra embedding, to prove Identity \eqref{eq.s0} one only needs to prove the validity of its image under $\Theta_c$, i.e.
\be 
\Theta_c( \tP_\omega  (Y))   =\eta ^{2}\, \Theta_c(  \tP_\omega  (Y_+))  +\eta ^{-2}\, \Theta_c( \tP_\omega  (Y_-)).
 \label{eq.s1}
\ee
By Lemma \ref{r.comm}, we have
\begin{align*}
\Theta_c( \tP_\omega  (Y)) & = \tPhi_\omega(X_1) + \tPhi_\omega(X_2)  \\
\end{align*} 
Using Lemma \ref{Square}, we have, for $i=1,2$
\be
\tPhi_\omega(X_i) =  \eta^2 \tPhi_\omega(X_{i,+}) +  \eta^{-2} \tPhi_\omega(X_{i,-}). \notag
\ee
Hence
\begin{align*}
\Theta_c( \tP_\omega  (Y)) & = \eta^2 (\tPhi_\omega(X_{+,1})+  \tPhi_\omega(X_{+,2})    ) +  \eta^{-2} \tPhi_\omega(X_{-,1}) + \tPhi_\omega(X_{-,2}) \\
& = \eta^2 \Theta_c( \tP_\omega  (Y_+)) +  \eta^{-2} \Theta_c( \tP_\omega  (Y_+)),
\end{align*} 
where the second identity follows from Lemma \ref{r.comm}. This completes the proof.
\end{proof}

\def\tA{\mathbb A}
\subsection{The marked annulus} The closed annulus is $[0,1] \times S^1$. The result of removing a point in each boundary component of the closed annulus is denoted by $\tA$, and its corresponding marked surface is called a marked annulus.

Let $Z$ be a stated $\partial \tA$-tangle diagram in $\tA$  as in Figure \ref{fig:annulus}, with arbitrary states on the boundary. Let $Z_+$ (respectively $Z_-$) be the result of the positive (respectively negative) resolution of the only crossing of $Z$.

\begin{figure}[htpb]
    \centering
    \includegraphics[scale=.55]{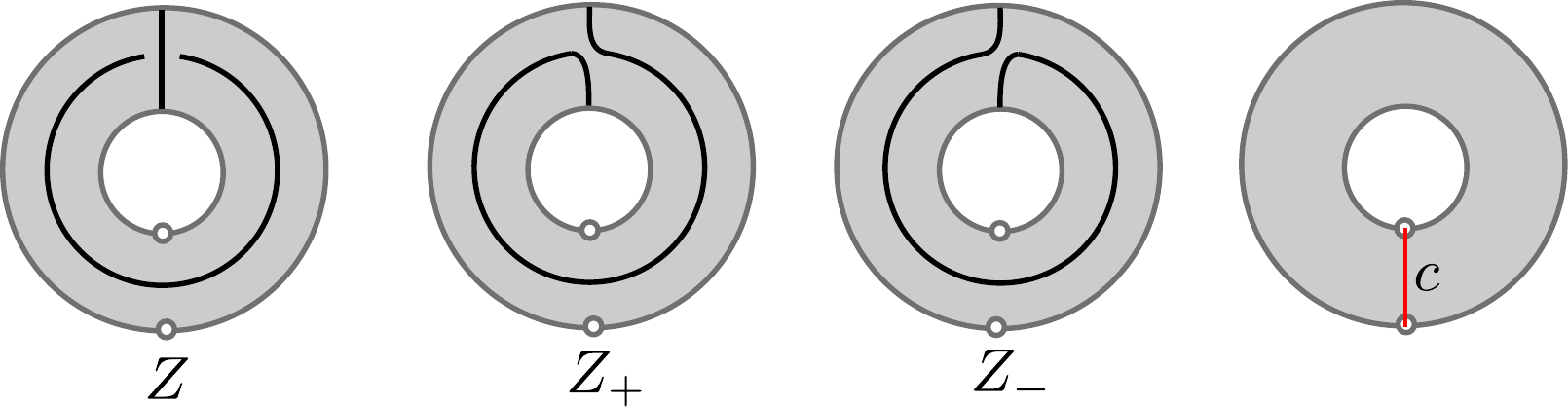}
    \caption{The annulus $\tA$ with stated tangle diagrams $Z, Z_+, Z_-$ on it, and the ideal arc $c$.}
    \label{fig:annulus}
\end{figure}

\begin{lemma}
\label{annulus} In $\Sx(\tA)$ one has
\be 
 \tP_\omega  (Z)   =\eta ^{2}\, \tP_\omega  (Z_+) +\eta ^{-2}\, \tP_\omega  (Z_-).
 \label{eq.s2}
\ee
\end{lemma}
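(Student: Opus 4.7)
The plan is to mirror the proof of Lemma \ref{pbigon}, reducing identity \eqref{eq.s2} to a previously established case by splitting along the ideal arc $c$ indicated in Figure \ref{fig:annulus}. First, cut $\tA$ along $c$ to obtain a simpler punctured bordered surface $\fS'$; based on the geometry of the figure and analogy with the earlier arguments, $\fS'$ should be either the ideal square $\SS$ or the punctured bigon $\BB$. Since the splitting homomorphism $\Theta_c : \Sx(\tA) \embed \Sx(\fS')$ is an algebra embedding by Theorem \ref{surfacesplit}, it suffices to verify \eqref{eq.s2} after applying $\Theta_c$.

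Next, I would apply Lemma \ref{r.comm} at each intersection point of $Z$ (and of $Z_+, Z_-$) with the splitting disk associated to $c$, obtaining
\[
\Theta_c(\tP_\omega(Z)) = \sum_i \tPhi_\omega(W_i),
\]
where the $W_i$ range over the lifts of $Z$ to $\fS'$, together with the analogous expressions for $Z_\pm$. The central observation is that each lift $W_i$ is an appropriately stated copy of $X$ (if $\fS' = \SS$) or of $Y$ (if $\fS' = \BB$), with matching stated partners $W_{i,+}$ and $W_{i,-}$ among the lifts of $Z_+$ and $Z_-$. Applying Lemma \ref{Square} or Lemma \ref{pbigon} term-by-term then yields
\[
\tPhi_\omega(W_i) = \eta^{2}\, \tPhi_\omega(W_{i,+}) + \eta^{-2}\, \tPhi_\omega(W_{i,-}),
\]
and summing over $i$ combined with the injectivity of $\Theta_c$ gives the identity in $\Sx(\tA)$.

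The step I expect to require the most care is the bookkeeping of the boundary states of the lifts. The proof of Lemma \ref{Square} relied critically on all endpoints of a given boundary edge of $\SS$ sharing the same state, so that arcs with both endpoints on a single boundary edge vanish via the height-exchange and trivial arc relations. One must therefore verify that the lifts of $Z$ to $\fS'$ really do produce states in this matched pattern, or alternatively push them into that pattern using the state-exchange relation \textup{(E)} before applying Lemma \ref{Square} or Lemma \ref{pbigon}. Beyond this verification, the proof is a routine recapitulation of the splitting strategy already used in the previous lemma, so no new ingredients should be needed.
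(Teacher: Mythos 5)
Your proposal is correct and follows essentially the same route as the paper: splitting $\tA$ along $c$ yields the ideal square $\SS$, the two lifts of $Z$ (resp.\ $Z_\pm$) are stated copies of $X$ (resp.\ $X_\pm$), and one repeats the argument of Lemma \ref{pbigon} using Lemma \ref{r.comm} and Lemma \ref{Square} together with the injectivity of $\Theta_c$. Your worry about state bookkeeping resolves itself, since Lemma \ref{Square} allows arbitrary boundary states on $X$, and the ``identical states on each boundary edge'' condition used in its proof is automatic for framed powers because parallel copies inherit the same state.
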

\begin{proof}
Let $c$ be the ideal arc connecting the  ideal points as  seen in Figure~\ref{fig:annulus}.

The result of splitting $\tA$ along $c$ is the ideal square $\SS$. 
 The two lifts of $Z$ are stated $\partial \SS$-tangle diagrams $X_1$ and $X_2$, each of which is a stated copy of $X$ from Figure \ref{fig:square}. Similarly, the two lifts of $Z_+$ are $X_{+,1}$ and $X_{+,2}$, each of which is a stated copy of $X_+$, and the two  lifts of $Z_-$ are $X_{-,1}$ and $X_{-,2}$, each of which is a stated copy of $X_-$.
Now one can repeat the proof of Lemma \ref{pbigon} to obtain the result.
\end{proof}

\def\XY{(U,\cV)}
\subsection{Proof of Theorem \ref{main}}\label{sec.proof}
\begin{proof} (a)  We need to show that $\tP_\omega(\al)= \tP_\omega(\beta)$ if $\al$ and $\beta$ are respectively the left and the right sides of one of the defining relations seen in Figure \ref{fig:stated-relations}.  The support of each relation is a ball, $B$. Note that in each relation, $\al$ is a stated $\cN$-tangle which without loss of generality we may assume does not have any component disjoint from $B$.  Let $U$ be the union of the support ball, $B$, and a small open tubular neighborhood of $\al$, and let $\cV= U \cap \cN$. The functoriality of stated skein modules, along with the embedding of $(U,\cV)$ into $(M,\cN)$, implies that it is enough show that 

\be 
\tP_\omega(\al)= \tP_\omega(\beta)  \quad \text{in} \ \Sx\XY.
\label{eq.main}
\ee

Below we consider the 5 defining relations. First we notice that when $\partial \al=\emptyset$, then $\cV=\emptyset$, and  Identity \eqref{eq.main} was  already proved in \cite{BW1}; see  also \cite{Le2}
for alternative proof utilizing skein theoretic techniques. As such we will consider only the case when $\partial \al \neq \emptyset$.

We begin by looking at the skein relation seen in Figure \ref{fig:skein}.
\begin{figure}[htpb]
    \centering
    \includegraphics[scale=.35]{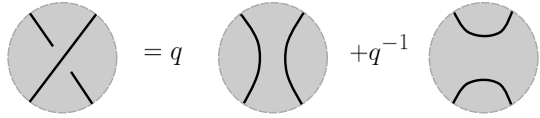}
    \caption{{\em Skein relation (A)}}
    \label{fig:skein}
\end{figure}

There are three cases: (i) $\al$ consists of two arcs, (ii) $\al$ consists of one arc, and (iii) $\al$ consists of one arc and one loop.

(i) Suppose $\al$ consists of two arcs. Then $\XY$ is pseudo-isomorphic  to the thickening of the ideal square $\SS$, and Identity \eqref{eq.main} was proved in Lemma \ref{Square}.

(ii) Suppose  $\al$ consists of a single arc. Then  $\XY$ is pseudo-isomorphic to the thickening of the punctured bigon $\BB$, and Identity \eqref{eq.main} was proved by Lemma \ref{pbigon}. We note that it is necessary to consider the reflection of Lemma \ref{pbigon} under the reflection anti-involution of Proposition \ref{r.reflection}, to cover the case where the crossing is opposite that of the statement of the lemma.

(iii) Suppose  $\al$  consists of one arc and one loop. Then  $\XY$ is pseudo-isomorphic to the thickening of the annulus $\AA$, and Identity \eqref{eq.main} was proved by Lemma \ref{annulus}. Here again we note the need for the image of Lemma \ref{annulus} under the reflection anti-involution to cover the case when the closed loop component crosses over the arc component.

We next look at the trivial knot relation seen in Figure \ref{fig:trivknot}.
\begin{figure}[htpb]
    \centering
    \includegraphics[scale=.35]{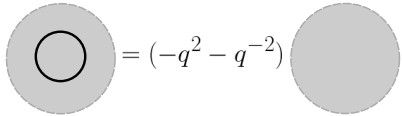}
    \caption{{\em Trivial knot relation (B)}}
    \label{fig:trivknot}
\end{figure}

In this case Identity \eqref{eq.main} becomes the easily verified identity
\[T_N(-\omega ^4-\omega ^{-4})=-\eta^4-\eta^{-4},\]
and as $\partial \al =\emptyset$, this  was  already proved in \cite{BW1,Le2}.

We next look at the trivial arc relation of type $1$ seen in Figure \ref{fig:trivarc1}.
\begin{figure}[htpb]
    \centering
    \includegraphics[scale=.35]{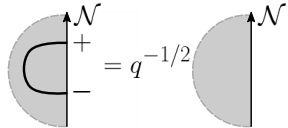}
    \caption{{\em Trivial arc relation of type $1$ (C)}}
    \label{fig:trivarc1}
\end{figure}

We need to prove that  $\tP_\omega(\al)= \eta^{-1}$.
\FIGc{trivial1}{Diagrams $\al$, $\al^{(N)}$, and $\gamma$,  with $N=3$.}{2.5cm}

Since $\al$ is arc, we have $\tP_\omega(\al) = \al^{(N)}$. We draw a diagram of $\al^{(N)}$ in Figure \ref{fig:trivial1}, which has $N(N-1)/2$ crossings. Using the height exchange move from equation \eqref{eq.reor} with $\nu=-1$ we have that 
$$  \al^{(N)} = (\omega^{2})^{-N(N-1)/2} \gamma,$$
where $\gamma$ is described as in Figure \ref{fig:trivial1}.  Repeated applications of the trivial arc relation of type $1$, shows that $\gamma = \omega^{-N}$.Taken together we have
$$ \tP_\omega(\al) = \al^{(N)} = (\omega^{2})^{-N(N-1)/2} \gamma =  (\omega^{2})^{-N(N-1)/2} \omega^{-N} = \eta^{-1},$$
which proves \eqref{eq.main} for this case.

We next look at the trivial arc relation of type $2$ seen in Figure \ref{fig:trivarc2}.
\begin{figure}[htpb]
    \centering
    \includegraphics[scale=.35]{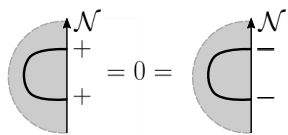}
    \caption{{\em Trivial arc relation of type $2$ (D)}}
    \label{fig:trivarc2}
\end{figure}

We need to prove that  $\tP_\omega(\al)= 0$, where $\al$ is a trivial arc with both states $+$ or both states $-$. 
Like in the previous case, the diagram of $\al^{(N)}$  has $N(N-1)/2$ crossings. Using the height exchange moves of Lemma \ref{r.refl} we can remove the crossings and get  
$  \al^{(N)} = u \gamma,$ where $u$ is a scalar, and $\gamma$ is as in Figure \ref{fig:trivial1}, except all the states are $+$ or all the states are $-$. The trivial arc relation of type $2$ implies that $\gamma=0$. Hence 
$ \tP_\omega(\al) = 0,$
which proves \eqref{eq.main} for this case.

Finally, we look at the state exchange relation seen in Figure \ref{fig:stateheight}.
\begin{figure}[htpb]
    \centering
    \includegraphics[scale=.35]{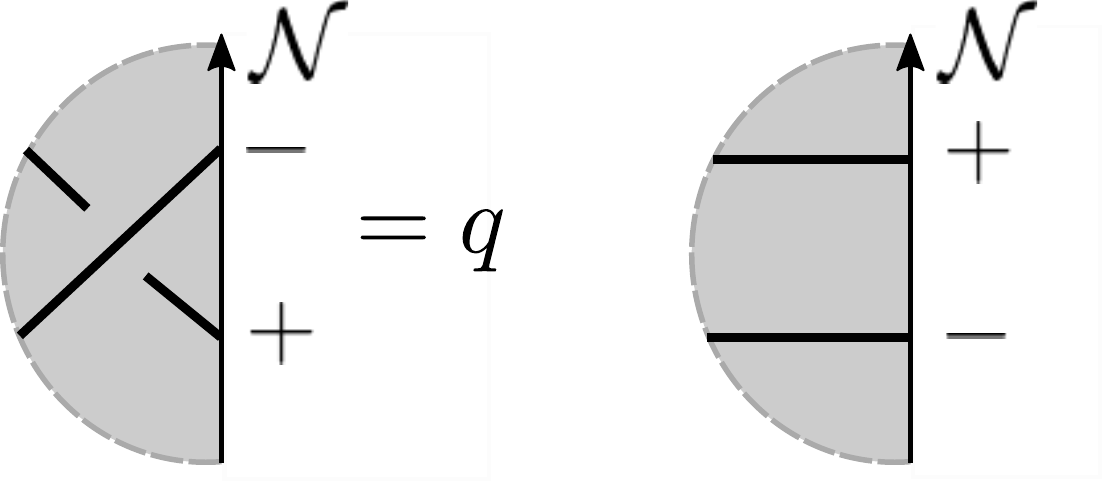}
    \caption{{\em The state exchange relation}}
    \label{fig:stateheight}
\end{figure}

This is the translation of the state exchange relation to punctured bordered surfaces.
By definition, we have the diagrams of $\tP_\omega(\al)$ and $\tP_\omega(\beta)$ as described in Figure \ref{fig:reorduuu}.
\FIGc{reorduuu}{The left is the diagram of  $\tP_\omega(\al)$ and the right is the diagram of $\tP_\omega(\beta)$, where  $N=3$.}{2.5cm}

The diagram of $\tP_\omega(\al)$ has $N^2$ crossings. Applying the relation seen in Figure \ref{fig:stateheight}, we get
\begin{align*}
\tP_\omega(\al) = (\omega^2)^{N^2}  \tP_\omega(\beta) = \eta^2 \tP_\omega(\beta),
\end{align*}
which proves \eqref{eq.main} for this case, and completes the proof of part (a).

\def\tTheta{\tilde \Theta}
(b) For every stated $\cN$-tangle $\al$ in $M$ we need to prove that
\be 
\Theta_{D}(\Po(\al)) = \Po(\TD(\al)).
\label{eq.m2}
\ee
Let $U$ be the union of a small open tubular neighborhood of $\al\cup D$. Then $U$ is the thickening of a surface which contains $\al$, and utilizing the embedding of $U$ into $M$ along with the functioriality of stated skein modules we only need to prove \eqref{eq.m2} in the case of $M=U$. Thus we may assume that $\al$ is a simple $\pfS$-tangle diagram on a punctured bordered surface $\fS$, with $c$ an ideal arc, and identity \eqref{eq.m2} becomes
\be 
\Theta_{c}(\Po(\al)) = \Po(\Theta_c(\al)).
\label{eq.m3}
\ee

We can assume that $\al$ is transverse to $c$. If $|\al\cap c|=1$, then \eqref{eq.m3} follows exactly from Lemma \ref{r.comm}.

\def\tal{\check \alpha}

Choose a finite subset $\cV\subset c$ such that $c\setminus \cV = \sqcup_{i=1}^k c_i$ and each $c_i$ intersects $\al$ at exactly one point.
 Let $\tfS = \fS \setminus \cV$.  We are in the situation described in Lemma \ref{r.cutk}. Thus, let $\fS'$ be the result of splitting $\fS$ along $c$, and $\tfS'$ be the result of splitting $\tfS$ along all $c_i$.
Choose an orientation for $c$, as in the discussion in Subsection \ref{sec.spl2}, allowing for well defined induced maps on the corresponding skein modules.  
Let $\tal\in \cS(\tfS)$ be the element defined by the same $\al$, but considered as an element of $\cS(\tfS)$. Then clearly 
\be 
\iota_*(\tal) = \al, \label{eq.p0}
\ee
where $\iota_*: \cS(\tfS) \to \cS(\fS)$ is the induced homomorphism.
Since $\tal$ intersects each $c_i$ in exactly one point, we have $\Theta_{c_i}(\Po(\tal)) = \Po(\Theta_{c_i}(\tal))$ again by Lemma \ref{r.comm}. As 
$\Theta_c: \cS(\tfS) \to \cS(\tfS')$ is the composition of the $\Theta_{c_i}$, we have
\be 
\Theta_{c} (\Po(\tal)) = \Po(\Theta_c(\tal)).
\label{eq.m4}
\ee
From the definition it is clear that $\iota_*$ commutes with $\Po$. By Lemma \ref{r.cutk} we have that
$\iota_*$ commutes with $\Theta_c$. Hence applying $\iota_*$ to \eqref{eq.m4} and using \eqref{eq.p0}, we get
$$
\Theta_{c}(\Po(\al)) = \Po(\Theta_c(\al)),
$$
which is Identity \eqref{eq.m3} as desired.
\end{proof}

\subsection{Specializing to surfaces}
\label{specializesurf}

Now we see how our main theorem can be specialized to the case of stated skein algebras of surfaces to give the following corollary:
\begin{corollary} \label{r.surfacePhi}
Suppose
 $\fS$ is a punctured bordered surface and $\omega$ is a complex root of unity. Let $N=\ord(\omega^8)$ and $\eta=\omega^{N^2}$.  
 
(a)  There exists a unique $\BC$-linear algebra homomorphism $\Phi_\omega : \cS_\eta(\fS) \to \cS_\omega (\fS)$ determined on generators by if $\alpha$ is a $\pfS-$arc then $\Phi_\omega(\alpha)=\alpha^{(N)}$, and if $\alpha$ is a $\pfS-$knot then $\Phi_\omega(\alpha)=T_N(\alpha).$

(b) Additionally, $\Phi_\omega$ is injective and we have that $\Phi_\omega $ commutes with the splitting homomorphism, meaning if $\hat{\fS}$ is the result of splitting $\fS$ along and ideal arc $\gamma$, then
 the following diagram commutes:
\be 
\begin{tikzcd}
\mathscr{S}_\eta(\fS)\arrow[r,"\Theta_{\gamma}"]\arrow[d,"\Phi_\omega "] & \mathscr{S}_\eta(\hat{\fS})\arrow[d,"\Phi_\omega "] \\
\mathscr{S}_\omega (\fS)\arrow[r,"\Theta_{\gamma}"]  & \mathscr{S}_\omega (\hat{\fS})\\
\end{tikzcd}
\label{eq.diasurf}
\ee
\end{corollary}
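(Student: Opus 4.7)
The plan is to extract the surface-level statement directly from Theorem \ref{main}, using the standard identification of stated $\tpfS$-tangles with stated $\widetilde{\cP}$-tangles in the thickening $\widetilde\fS = \fS \times (-1,1)$ recalled in Subsection \ref{sec.equiv}. First, specializing Theorem \ref{main}(a) to $\MN=(\widetilde\fS, \widetilde \cP)$ yields a $\mathbb{C}$-linear map with the prescribed values on $\pfS$-arcs and $\pfS$-knots. To promote it to an algebra homomorphism, I would check compatibility with vertical stacking: if $\al$ and $\beta$ are $\pfS$-tangle diagrams representing elements of $\cS_\eta(\fS)$, their product $\al\cdot\beta$ is obtained by placing $\al$ above $\beta$, and each connected component of $\al\cdot\beta$ is a component of exactly one of $\al$ or $\beta$. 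The parallel push-offs used to define $a^{(N)}$ and $(T_N)^\fr(K)$ stay within an arbitrarily small tubular neighborhood of the corresponding component and therefore do not disturb the relative height separation, so $\Phi_\omega(\al\cdot\beta)=\Phi_\omega(\al)\cdot\Phi_\omega(\beta)$. Uniqueness then follows since $\pfS$-arcs and $\pfS$-knots generate $\cS_\eta(\fS)$ as an algebra.

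For the first half of part (b), the commutativity of the diagram \eqref{eq.diasurf} is essentially a direct translation of Theorem \ref{main}(b). An ideal arc $\gamma$ in $\fS$ thickens to a properly embedded disk $D=\gamma\times(-1,1)$ in $\widetilde\fS$ which is disjoint from the closure of $\widetilde \cP$; picking any oriented open interval $a$ inside $D$, splitting $(\widetilde\fS,\widetilde\cP)$ along $(D,a)$ recovers the thickening of the punctured bordered surface $\hat\fS$ obtained by splitting $\fS$ along $\gamma$. Under this identification the splitting homomorphism $\Theta_\gamma$ of Theorem \ref{surfacesplit} coincides with $\Theta_{(D,a)}$, so the commuting square \eqref{eq.dia} at $\MN=(\widetilde\fS,\widetilde\cP)$ specializes to the required commuting square.

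For injectivity, my plan is to use compatibility with splitting to reduce to a base case where injectivity is known. Assume first that $\fS$ is triangulable and fix an ideal triangulation. Iterated application of Theorem \ref{surfacesplit} gives an algebra embedding of $\cS_\eta(\fS)$ into a tensor product of stated skein algebras of ideal triangles, and similarly for $\cS_\omega(\fS)$. Because $\Phi_\omega$ commutes with each splitting step by the square just established, it suffices to prove injectivity of $\Phi_\omega$ on the stated skein algebra of an ideal triangle. By \cite{CL}, that algebra is a braided tensor product of two copies of $\SB\cong \OSL$, so injectivity reduces to injectivity of $\Phi_\omega$ on $\SB$, which in turn is the Hopf dual of Lusztig's Frobenius and is injective (concretely, this can be verified by using the PBW-type basis of $\OSL$ consisting of ordered monomials in $a,b,c,d$ modulo the single relation $ad-q^{-2}bc=1$, and noting that $\Phi_\omega$ sends such a monomial to the corresponding monomial of $N$-th powers, which remain linearly independent). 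For the finitely many non-triangulable marked surfaces listed in Subsection \ref{IdealTriang}, injectivity can be verified directly using the explicit basis of Theorem \ref{surfbasis}.

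The main obstacle will be the injectivity argument on ideal triangles, or equivalently on $\SB\cong\OSL$: extracting injectivity cleanly from the braided tensor product structure, and being careful that splitting embeddings at $\eta$ and at $\omega$ are compatible with $\Phi_\omega$ in a way that lets the reduction go through. An alternative route, should the braided-tensor-product reduction prove awkward, is to argue directly with a leading-term/filtration argument on the basis $B(\fS)$: the image $\Phi_\omega(b)$ of a basis element $b=a_1\cup\cdots\cup a_k\cup K_1\cup\cdots\cup K_l$ has $N$-fold cabling as its top term, and distinct basis elements of $\cS_\eta(\fS)$ have distinct top terms after re-expansion in $B(\fS)\subset\cS_\omega(\fS)$.
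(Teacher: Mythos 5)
Your overall architecture matches the paper's: parts (a) and the splitting-compatibility in (b) are indeed direct specializations of Theorem \ref{main}, and for injectivity the paper also reduces, via an ideal triangulation and the commuting square \eqref{eq.diasurf}, to the ideal triangle. Your route for the triangle differs: the paper argues directly with the basis of Theorem \ref{surfbasis}, observing that $\alpha^{(N)}$ for a basis arc $\alpha$ is an $N$-strand half twist whose crossings are removable by height exchange moves at the cost of a nonzero scalar (using that all states on a given edge of a basis diagram are equal and that cabling cannot place a negative state after a positive one), so basis diagrams map to nonzero multiples of basis diagrams. Your detour through the braided tensor product $\cS(\mathfrak{T})\cong\OSL\underline{\otimes}\OSL$ and a PBW basis of $\OSL$ would also work, but it front-loads Proposition \ref{triangGlue} (compatibility of $\Phi_\omega$ with the self-braided tensor product), which is more machinery than the base case needs; the paper's direct basis computation is the more economical choice here.

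The genuine gap is in your treatment of the non-triangulable surfaces. The list in Subsection \ref{IdealTriang} includes all surfaces with $|\cV|=0$, i.e.\ closed unmarked surfaces of arbitrary genus, and for these your claim that injectivity ``can be verified directly using the explicit basis of Theorem \ref{surfbasis}'' does not go through. The basis there consists of simple closed multicurves, and $\Phi_\omega$ of a multicurve is a product of Chebyshev threadings $T_N(\alpha_i)$, whose expansion in the multicurve basis is a large linear combination; showing that the images of distinct basis elements stay linearly independent is exactly the hard point. The paper handles this with a lead-term argument in Dehn--Thurston coordinates relative to a pants decomposition (following \cite{FKL1,FKL2}), plus a separate case for multicurves supported on the pants curves themselves and a citation of Frohman--Gelca for the torus. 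Your fallback ``leading-term/filtration argument on $B(\fS)$'' gestures at the right idea but is not carried out: you would need to exhibit an ordering for which $K^{(N)}$ is genuinely the top term of $T_N(K)^{\fr}$ \emph{and} for which distinct multicurves have distinct, order-comparable leading data — which is precisely what the Dehn--Thurston coordinate monoid provides and what your proposal omits.
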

\begin{proof}
Aside from the injectivity, this is a direct specialization of Theorem \ref{main}.  We will show $\Phi_\omega$ is injective separately for triangulable surfaces and surfaces which are not triangulable, as described in Subsection \ref{IdealTriang}.

For triangulable surfaces, we will see injectivity of $\Phi_\omega$ as an application of the compatibility of $\Phi_\omega$ with the splitting homomorphism.  Let $\Delta$ be an ideal triangulation of $\fS$, then combining the discussion of Subsection \ref{IdealTriang} with Theorem \ref{surfacesplit} we have the following injective map coming from the composition of splitting homomorphims:
\[\Theta_\Delta:\cS(\fS)\hookrightarrow \bigotimes_{\mathfrak{T}\in \Delta} \cS(\mathfrak{T}), \]
where $\mathfrak{T}$ is the ideal triangle.
We see that $\Theta_\Delta$ fits into the following commutative diagram by repeated applications of the compatibility of $\Phi_\omega$ with the splitting homomorphisms:
\be
\label{triangdecomp}
\begin{tikzcd}
\mathscr{S}_\eta(\fS)\arrow[r,"\Phi_\omega"]\arrow[d,hook,"\Theta_\Delta"'] & \mathscr{S}_\omega(\fS)\arrow[d,hook,"\Theta_\Delta "] \\
\bigotimes\cS_\eta(\mathfrak{T})\arrow[r,"\bigotimes \Phi_\omega"']  & \bigotimes \cS_\omega(\mathfrak{T}).\\
\end{tikzcd}
\ee
Then noting that the tensor product of injective maps between vector spaces is injective, we see the injectivity of $\Phi_\omega$ in general to be reduced to the injectivity of $\Phi_\omega$ in the specific case of the ideal triangle. Following Theorem \ref{surfbasis} we see that a basis of $\cS(\mathfrak{T})$ is given by crossingless diagrams of arcs which meet distinct boundary edges in $\mathfrak{T}$, where on a given boundary edge there is never a negative state after a positive with respect to the orientation of that boundary edge induced by the orientation of the ideal triangle.  Then note for an arc $\alpha$, we have within a small neighborhood of $\alpha$ that $\alpha^{(N)}$ is the $N$-strand half twist equally stated on each boundary edge.  Using that all the states on a given boundary edge are equal we have that $\frac{N(N-1)}{2}$ height exchange moves, as seen in Lemma \ref{r.refl}, can be applied to remove all of the crossings at the cost of a nonzero scalar multiple. Additionally, noting that the framed power will not introduce a negative state after any positive states we see that $\Phi_\omega$ sends each basis diagram to a non-zero multiple of a basis diagram, meaning $\Phi_\omega$ is injective.

Now we proceed to surfaces which are not triangulable.  Let $\fS$ be a punctured bordered surface, where there is a compact oriented surface $\bfS$ and a finite set $\cV\subset \bfS$ such that $\fS\cong \bfS\setminus \cV$.  The first case is when $|\cV|=0$.  This case is well known to experts, and is stated in Theorem $3.8$ of \cite{FKL1}, but we include a proof here for completeness. 

Following \cite{FKL2}, we look to define a lead term map for $\cS(\fS)$ determined by Dehn-Thurston coordinates for simple closed curves on $\fS$.  Let $B$ be the basis of isotopy classes of simple closed curves of $\cS(\fS)$.  

If the genus of $\fS$ is greater than one and $|\cV|=0$ h, then following Subsection $3.6$ of \cite{FKL2}, we have coordinate datum determined by Dehn-Thurston coordinates of simple closed curves.  Fix an ordered pants decomposition of $\fS$, $P=\{P_1,...,P_{3g-3}\}$, meaning a collection of simple closed curves that cut $\fS$ into $2g-2$ pairs of pants.  Fix a dual graph to $P$, meaning an embedded trivalent graph with $3g-3$ edges such that each $P_i$ is met transversely by exactly one edge and each pair of pants in $\fS\backslash P$ contains exactly one trivalent vertex.  Then define
\[\nu:B\rightarrow \mathbb{Z}^{6g-6}\]
by $(\nu(\alpha))_i=I(\alpha,P_i)$ for $1\leq i\leq 3g-3$, and $(\nu_2(\alpha))_j$ for $3g-3<j\leq 6g-6$ is the twist coordinate of $\alpha$ at $P_i$ with respect to the edge of the dual graph that meets $P_i$.  

Then $\nu$ is an injective map with an image that is a submonoid of $\mathbb{Z}^{6g-6}$, which refer to as the monoid of curves in $\fS$.  Additionally, let $\mu$ denote the inverse of $\nu$ defined on the monoid of curves.  

Now let $x=\sum_{b\in S}c_b b\neq 0$ be a linear combination with $b\in S\subseteq B$ where $c_b\neq 0$ for $b\in S$.  Then we proceed by analyzing two cases separately.  First we define $S_{max}$ to be the subset of $S$ of basis vectors, $b$, such that $\sum_{i=1}^{3g-3} (\nu(b))_i=k$ and $k$ is maximal among all $b\in S$.  

First we assume that $k>0$.  In this case we define a lead term map, $lt(x)=\sum_{b\in S_{max}}c_b b$.  We look to show that $\Phi_\omega(x)\neq 0$, by showing that $lt(\Phi_\omega(x))\neq 0$.  We begin by utilizing $\mu$, to describe each $b\in S_{max}$ as some $\mu(\vec{n_b})$ where $\{\vec{n_b}\}$ are distinct vectors in the monoid of curves.  Now noting that $b^r=\mu(r\vec{n}_b)$, we see that \[lt(T_N(b))=\mu(N\vec{n}_b).\]  
Giving
\[lt(\Phi_\omega(x))=\sum_{b\in S_{max}} c_b\mu(N\vec{n}_b),\]
which is nonzero as $\sum_{i\in S_{max}} c_b\mu(\vec{n}_b)$ is nonzero.  Thus $\Phi_\omega$ is injective in this case.

Now we need to consider the case when the value $k$ used in defining $S_{max}$ is zero. This corresponds to basis vectors which never intersect any of the curves in the ordered pants decomposition, meaning $x\in \mathbb{C}[P_1,...,P_{3g-3}]$ the polynomial algebra in commuting variables.  In this case, $\Phi_\omega(x)=T_N(x)$ and an additional lead term argument can be done using the ordinary degree of polynomials, to see $T_N(x)\neq 0$, meaning $\Phi_\omega$ is injective. 

The final case case of surfaces with $|\cV|=0$ is the torus, which was explicitly described by Frohman and Gelca in \cite{FG}.  Now of the remaining surfaces which are not triangulable we have that the skein algebras of the sphere with one or fewer marked points and the disk with one or fewer boundary marked point are just the ground field, and so $\Phi_\omega$ is injective immediately.  For the sphere with two marked points we can use compatibility with the splitting homomorphism to reduce the case of ideal bigon.  This last case of the ideal bigon follows from a simplification of the argument given for the ideal triangle.  In particular, basis diagrams are sent directly to basis diagrams as the orientation of the two boundary edges can be chosen so the framed power of any arc is crossingless.
\end{proof}

Even in the case of surfaces, the three-dimensional nature of Theorem \ref{main} can provide additional insight beyond Corollary \ref{r.surfacePhi}. For example,  Corollary \ref{r.surfacePhi} does not explain why the image under $\Phi_\omega$ of the curve $\al$ in Figure \ref{fig:SurfaceExtra} is given by the threading of $\al$ by $T_N$. To calculate $\Phi_\omega(\al)$ using Corollary \ref{r.surfacePhi} one first resolves the crossing of $\al$ to get a linear sum of two simple diagrams, then apply the formula in Corollary \ref{r.surfacePhi} to each diagram. It is far from trivial to show that the result is the same as the threading of $\al$ by $T_N$.

\begin{figure}[htpb]
    \centering
    \includegraphics[scale=.3]{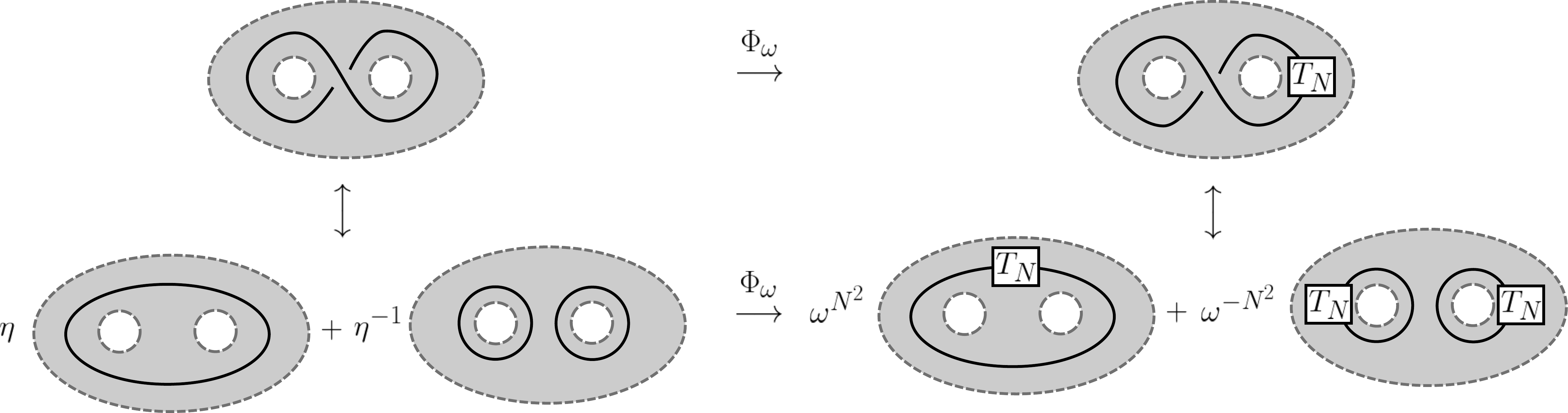}
    \caption{The right hand side of this diagram is an example of a nontrivial equality for the stated skein algebras of surfaces arising from extending the Chebyshev-Frobenius homomorphism to $3-$manifolds. Note that this example is for the skein algebra of unmarked surfaces.}
    \label{fig:SurfaceExtra}
\end{figure}

\begin{remark}
\label{PrevKnown}
(a)
When $\partial \fS=\emptyset$, the existence of a Chebyshev homomorphism for the thickening of $\fS$
was first proved in \cite{BW1} using the quantum trace map. 
 This result was recovered in \cite{Le2} utilizing skein theoretic techniques.

(b) For the case of the positive submodule of the stated skein module for marked $3-$manifolds Theorem \ref{main} was proven in \cite{LP}.

(c) Corollary \ref{r.surfacePhi}, in the case that the order of $\omega$ is odd only, is also proved in \cite{KQ}, using independent arguments.  

(d) A discussion of the Frobenius homomorphism for the skein algebra of surfaces with no marked points, and its relation to quantum Hamiltonian reduction can be found in \cite{GJS}.   Additionally, in the case of surfaces, quantum moduli algebras provide an alternative viewpoint to stated skein algebras, from this perspective a Frobenius homomorphism has been developed by \cite{BR}.
\end{remark}

\begin{remark}
The injectivity of the Chebyshev-Frobenius homomorphism in the case of surfaces is in contrast to the case of stated skein modules of $3-$manifolds in general.  Even in the case of closed $3-$manifolds, for example  $S^2\times S^1\# S^2\times S^1$,  the kernel of the Chebyshev homomorphism may have non-trivial kernel.
This will be explored in upcoming work of the second author and Constantino \cite{CL2}.
\end{remark}

\subsection{Transparency and the Chebyshev-Frobenius image}We will show that the image of the Chebyshev-Frobenius map is always {\em transparent} or {\em skew-transparent}.

\begin{theorem}\label{r.transparent}
Suppose $\MN$ is a marked 3-manifold, and $\omega $ is a root of unity, with $N=\text{ord}(\omega^8)$.  Then let $N'=\text{ord}(\omega^4)$ and $\mu=(-1)^{N'+1}$.  If $\alpha$ is a $\cN-$tangle, then $\Phi_\omega(\alpha)$ is $\mu-$transparent in the sense that the identity in Figure \ref{fig:ImageTrans} holds. 

\begin{figure}[htpb]
    \centering
    \includegraphics[scale=.45]{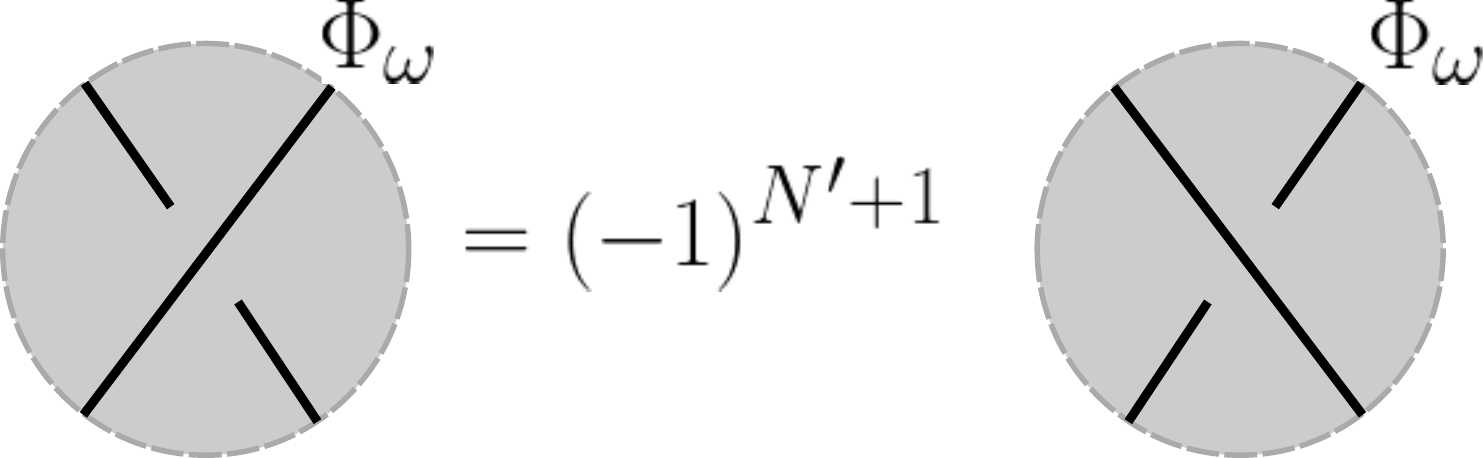}
    \caption{A depiction of $\mu-$transparency.}
    \label{fig:ImageTrans}
\end{figure}

 In words this means the following. Suppose $T_1$ is a stated $\cN$-tangle disjoint from  $\al$, and $T_2$ is $\cN$-isotopic to $T_1$ through an isotopy which is disjoint from $\al$ in all but one instance, where the intersection is transverse and in a single point. Then we have
\[\Phi_\omega(\alpha)\cup T_1=\mu(\Phi_\omega(\alpha)\cup T_2) \quad \in \mathscr{S}(M,\cN).\]

\end{theorem}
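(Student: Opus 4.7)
The plan is to reduce the transparency identity to a local identity in a ball via functoriality, then verify this local identity separately when the local piece of $\alpha$ comes from an $\cN$-knot and when it comes from an $\cN$-arc.

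First, by the functoriality of $\cS_\omega$ from Subsection \ref{functoriality} together with the pseudo-isomorphism trick of Example \ref{exa.1}, one may restrict attention to a small regular neighborhood $U$ of the isotopy disc along which $T_1$ is carried across $\alpha$; inside $U$ the relevant piece of $\alpha$ is a single unknotted strand meeting the local piece of $T_i$ transversely at one point. Using the commutativity of $\Phi_\omega$ with the splitting homomorphism (Theorem \ref{main}(b)), we may further split $\alpha$ along properly embedded discs disjoint from the crossing. After these reductions the identity must be checked in $\cS_\omega(U,U\cap\cN)$, where the local piece of $\Phi_\omega(\alpha)$ is either $T_N^{\fr}(\alpha)$ on a still-closed $\cN$-knot component (the knot case) or $\alpha^{(N)}$, consisting of $N$ mutually parallel copies of a single unknotted strand (the arc case).

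For the knot case, the relevant tubular neighborhood pulls back via functoriality to a thickened unmarked surface, and the identity becomes the transparency/skew-transparency of $T_N^{\fr}(\alpha)$ proved by Bonahon and Wong in \cite{BW1} (see \cite{Le2} for a skein-theoretic proof). This produces exactly the sign $\mu=(-1)^{N'+1}$. For the arc case, the essential algebraic input is the identity $\omega^{4N}=\mu$, which I verify by case analysis on the parity of $N'$: if $N'$ is odd then $N=N'$ and $\omega^{4N}=1=\mu$, while if $N'$ is even then $N=N'/2$ and $(\omega^4)^{N'/2}$ must equal $-1$ by the minimality of $N'=\ord(\omega^4)$. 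Skein-theoretically, realize the local configuration as the thickening of an ideal square, with $\alpha$ running between one pair of opposite edges and the local piece of $T_i$ running between the other pair. Expand both the over-crossing and the under-crossing of the single strand of $T_i$ with $\alpha^{(N)}$ by applying the Kauffman skein relation (A) at each of the $N$ crossings; using the height-exchange moves of Lemma \ref{r.refl} and the $q$-binomial vanishings of Lemma \ref{qfacts}(b) at $q^{1/2}=\omega$ and $\ord(\omega^8)=N$, the expansions collapse to extremal terms in the spirit of formula \eqref{eq.qq} in Lemma \ref{Square}. Comparing the two collapsed expansions, with the under-crossing expansion obtained from the over-crossing one via the reflection anti-involution of Proposition \ref{r.reflection}, yields the scalar ratio $\omega^{4N}=\mu$.

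The principal obstacle is the arc case. The single-sided $N$-cable expansion is less symmetric than the fully cabled expansion of Lemma \ref{Square}, and the collapse of intermediate terms crucially exploits that all $N$ parallel copies of $\alpha$ inherit the same boundary state, so that the trivial arc relations (C) and (D) of Figure \ref{fig:stated-relations} can be applied repeatedly to kill the middle binomials. Once both cases are in hand, the statement for an arbitrary $\cN$-tangle $\alpha$ follows by treating its components one at a time and invoking the multiplicativity of $\Phi_\omega$ over disjoint unions.
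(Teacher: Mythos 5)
Your proposal is correct and follows essentially the same route as the paper: reduce to one component, quote the known transparency of $T_N$ for knots, and for an arc expand the single strand of $T_i$ over the cable $\alpha^{(N)}$, observe that the mixed resolutions vanish because all parallel copies carry the same boundary state, and compare the two extremal expansions via the reflection anti-involution to obtain the scalar $\omega^{4N}=\mu$ (which you, unlike the paper, verify explicitly). One small remark: the $q$-binomial vanishing of Lemma \ref{qfacts}(b) is not actually needed in the arc case, since with only $N$ crossings each mixed resolution dies individually by the equal-state trivial arc relation you also invoke, which is the operative mechanism exactly as in the paper's ``consecutive resolutions'' argument.
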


\begin{remark} We say transparency is $\mu$-transparency when $\mu=1$ and skew-transparency is $\mu$-transparency when $\mu=-1$.

\end{remark}

\begin{proof}
 We can assume that $\al$ has one component. If $\al$ is a an $\cN$-knot the $\mu$-transparency of $\Phi_\omega(\al)=T_N(\al)$ was proved in \cite{Le2,LP}.

Suppose $\al$ is an $\cN$-arc. Potentially up to isotopy of $T_1$ and $T_2$ we work with the local picture seen in Figure \ref{fig:transinduct}.  Then following a version of the consecutive resolutions argument seen in the proof of Lemma \ref{Square}, we see that only the totally positive and totally negative resolutions result in nonzero diagrams.

\begin{figure}[htpb]
    \centering
    \includegraphics[scale=.35]{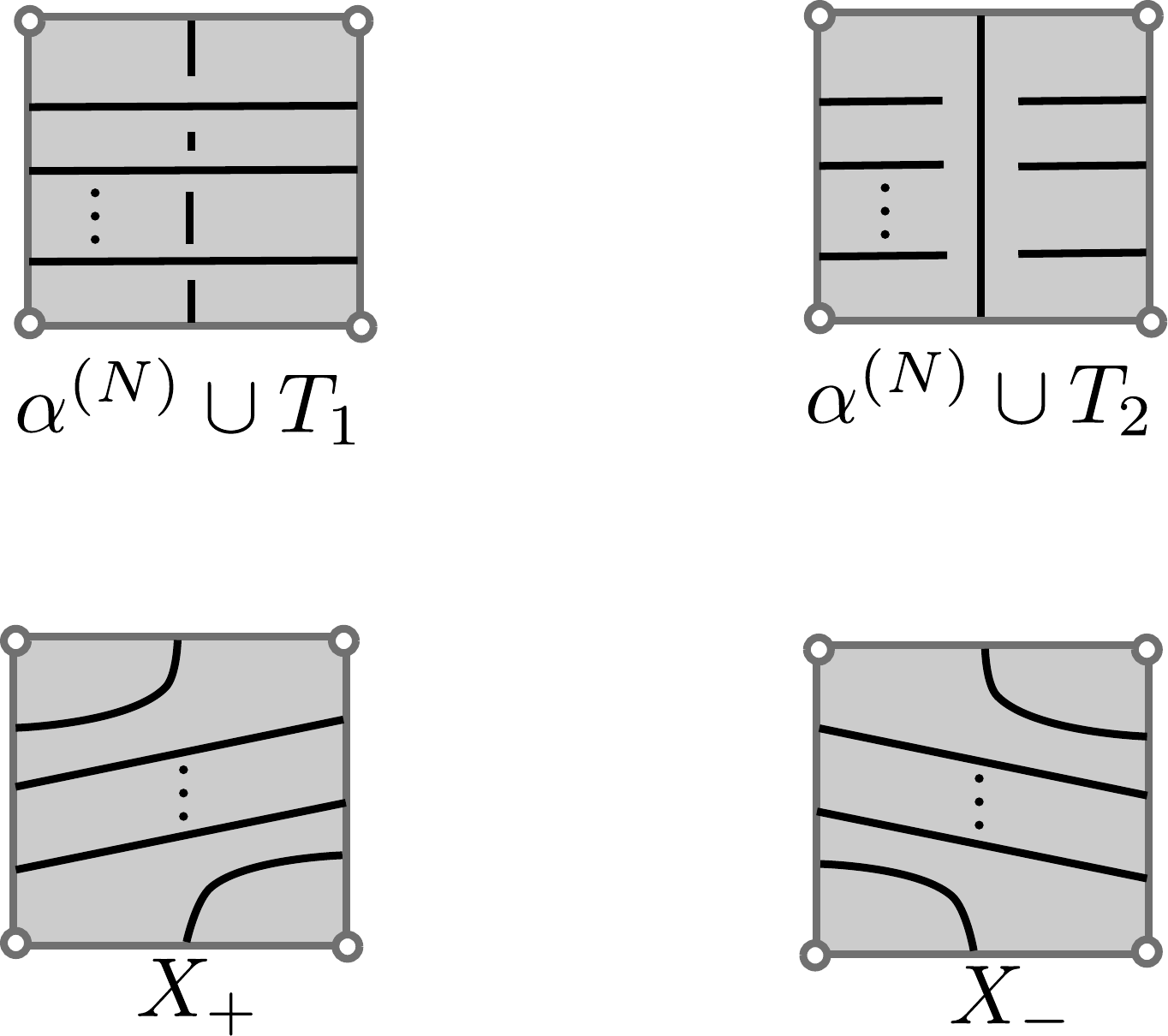}
    \caption{A local picture depicting the crossing of $\alpha^{(N)}$, $T_1$,$T_2$ and the resolutions of the corresponding crossings.}
    \label{fig:transinduct}
\end{figure}
 It follows that
 \be 
 \al^{(N)} \cup T_1 = \omega^{2N} X_+ + \omega^{-2N} X_-.
 \ee
Similarly the image under the reflection anti-involution gives, 
\be 
 \al^{(N)} \cup T_2 = \omega^{-2N} X_+ + \omega^{2N} X_-.
 \ee
Which tells us

$$\al^{(N)} \cup T_1= \omega^{4N} ( \al^{(N)} \cup T_2) =\mu ( \al^{(N)} \cup T_2),$$
as desired.

\end{proof}

\begin{corollary}
(a)  If $\text{ord}(\omega^4)$ is odd we have $\Phi_\omega(\alpha)$ is transparent, and if $\text{ord}(\omega^4)$ is even we have $\Phi_\omega(\alpha)$ is skew-transparent.

(b) If $\alpha$ is a $\cN-$arc we have $\alpha^{(2N)}$ is transparent, and if $\alpha$ is a $\cN-$knot we have $T_{2N}(\alpha)$ is transparent.
\end{corollary}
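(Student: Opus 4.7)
The plan is to deduce both statements as immediate consequences of Theorem~\ref{r.transparent}.

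Part (a) is a parity check on the sign $\mu = (-1)^{N'+1}$. When $N'$ is odd, $N'+1$ is even and $\mu = +1$, which is transparency by definition; when $N'$ is even, $N'+1$ is odd and $\mu = -1$, which is skew-transparency.

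For part (b), the guiding principle is that $\mu$-transparency is multiplicative under disjoint union in parallel copies: if $X$ and $X'$ are each $\mu$-transparent and are supported in disjoint tubular neighborhoods of two parallel push-offs of $\alpha$, then $X\cup X'$ is $\mu^{2}=1$-transparent. Given tangles $T_1,T_2$ related by an isotopy passing through $\alpha$ once transversely, one temporally separates the two crossings by inserting an intermediate tangle $T_{1/2}$ lying between the supports of $X$ and $X'$; applying $\mu$-transparency of $X$ to the first stage and of $X'$ to the second produces a factor of $\mu\cdot\mu=1$.

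For an $\cN$-arc $\alpha$, let $\widetilde{\alpha}$ be a parallel push-off of $\alpha$ in the framing direction. Then up to $\cN$-isotopy one has $\alpha^{(2N)} = \alpha^{(N)} \cup \widetilde{\alpha}^{(N)} = \Phi_\omega(\alpha)\cup\Phi_\omega(\widetilde{\alpha})$, and each factor is $\mu$-transparent by Theorem~\ref{r.transparent}, so the principle gives transparency at once. For an $\cN$-knot $\alpha$, combine the Chebyshev identity $T_{2N}(x)=T_{N}(x)^{2}-2$ with the definition of threading: expanding $T_N(x)^2=\sum_{k,j}c_{k}c_{j}x^{k+j}$ and using $\alpha^{(k)}\cup\widetilde{\alpha}^{(j)}=\alpha^{(k+j)}$ yields the identity
\[
\Phi_\omega(\alpha)\cup\Phi_\omega(\widetilde{\alpha}) \;=\; T_{2N}(\alpha) + 2\emptyset
\]
in $\Sx(M,\cN)$, where $\widetilde{\alpha}$ is a parallel push-off of $\alpha$. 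The left-hand side is transparent by the multiplicative principle applied to $X=\Phi_\omega(\alpha)$ and $X'=\Phi_\omega(\widetilde{\alpha})$, and the constant $2\emptyset$ is trivially transparent, so $T_{2N}(\alpha)$ is transparent.

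The main technical point to verify is that a single transverse crossing of $\alpha$ by the isotopy genuinely factors into two successive crossings, one through each parallel push-off. This is legitimate because $\widetilde{\alpha}$ may be chosen arbitrarily close to $\alpha$ within a single tubular neighborhood: the supports of $\Phi_\omega(\alpha)$ and $\Phi_\omega(\widetilde{\alpha})$ may be made disjoint while still being crossed in succession by any isotopy that passes through $\alpha$ at one point, so the time parameter of the isotopy can be reparametrized to linger on the slab separating the two supports.
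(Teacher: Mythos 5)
Your proof is correct and follows essentially the same route as the paper: part (a) is the same parity observation on $\mu=(-1)^{N'+1}$, and for part (b) the paper likewise passes to the two-component parallel $\alpha^{(2)}$, notes that crossing it costs $\mu^2=1$ by applying Theorem \ref{r.transparent} to each push-off in succession, and uses $T_N(x)^2=T_{2N}(x)+2$ to convert $\Phi_\omega(\alpha^{(2)})$ into $T_{2N}^{\fr}(\alpha)$ plus a constant. Your explicit justification of the "two successive crossings" factorization is a welcome elaboration of a step the paper leaves implicit.
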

\begin{proof} (a) follows right away from Theorem 
We have 
\[\Phi_\omega(\alpha)\cup T_1 = (-1)^{N'+1}\Phi_\omega(\alpha)\cup T_2,\]
In particular, for any one component tangle $\alpha$ we have
\[\Phi_\omega(\alpha^{(2)})\cup T_1=\Phi_{\omega}(\alpha^{(2)})\cup T_2,\]
meaning $\Phi_\omega(\alpha^{(2)})$ is transparent.  In particular, if $\alpha$ is a $\cN-$arc we have have $\alpha^{(2N)}$ is transparent and if $\alpha$ is a $\cN-$knot we have $T_N(\alpha^{2})$ is transparent.  Additionally, we have $T_{2N}(\alpha)=T_N(\alpha^2)+2$, and so $T_{2N}(\alpha)$ is also transparent.
\end{proof}

\subsection{Centers of skein algebras of surfaces}
We have the following corollary in the case of surfaces.

\begin{corollary}
\label{SurfaceCenter}
Suppose $\fS$ is a punctured bordered surface and $\omega$ is a root of unity with $N=\ord(\omega^8)$.  

(a) If $\alpha$ is a $\pfS$-knot then $T_{2N}(\alpha)$ is central, and if $\alpha$ is a stated $\pfS-$arc then  $\alpha^{(4N)}$ is central.

(b)  More precisely, let $N'=\ord(\omega^4)$ and $N''=\ord(\omega^2)$, then we have that if $\alpha$ is a $\pfS-$knot then $T_{N'}(\alpha)$ is central, and if $\alpha$ is a stated $\pfS-$arc then  $\alpha^{(N'')}$ is central.  
\end{corollary}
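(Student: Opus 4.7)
My plan is to deduce Corollary~\ref{SurfaceCenter} from Theorem~\ref{r.transparent} and its corollary by showing that transparency of $\Phi_\omega(\alpha)$ in the ambient $3$-manifold, combined with the height-exchange relations of Lemma~\ref{r.refl}, yields centrality in the surface algebra $\cS(\fS)$. Multiplication in $\cS(\fS)$ is vertical stacking in $\fS\times(-1,1)$; to show that an element $X$ commutes with $Y$, one isotopes $X$ from above $Y$ to below $Y$, which has two effects: (i) reversing the over/under of $X$ with $Y$ at each interior projection crossing, and (ii) exchanging heights of the boundary endpoints of $X$ and $Y$ on each shared boundary wall. Transparency of $X$ makes (i) free, and skew-transparency makes $X\cdot X$ transparent. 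Boundary exchanges in (ii) contribute, via Lemma~\ref{r.refl}, a factor $q^{\pm 1}$ per same-state swap, so that $N''$ parallel strands of an arc contribute $q^{\pm N''}=\omega^{\pm 2N''}=1$ by the very definition $N''=\ord(\omega^2)$.

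For part (a), the corollary after Theorem~\ref{r.transparent} gives that $T_{2N}(\alpha)$ is transparent for a $\pfS$-knot $\alpha$, which has no boundary contribution and is therefore central. For a stated $\pfS$-arc $\alpha$, $\alpha^{(2N)}$ is transparent, and the boundary exchanges for $\alpha^{(4N)}$ contribute $q^{\pm 4N}=\omega^{\pm 8N}=1$ by $N=\ord(\omega^8)$; alternatively, $\alpha^{(4N)}=(\alpha^{(2N)})^2$ is a stacking-square of a central element and hence central.

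For part (b), I would run a short case analysis on parities. The identities $\omega^8=(\omega^4)^2$ and $\omega^4=(\omega^2)^2$ force $N'\in\{N,2N\}$ and $N''\in\{N',2N'\}$, leaving only the three configurations $(N,N',N'')=(N,N,N)$, $(N,N,2N)$, or $(N,2N,4N)$. In the first two cases $N'$ is odd and $\mu=+1$ in Theorem~\ref{r.transparent}, so $T_N(\alpha)$ and $\alpha^{(N)}$ are themselves transparent; thus $T_{N'}(\alpha)=T_N(\alpha)$ is central, and $\alpha^{(N'')}\in\{\alpha^{(N)},(\alpha^{(N)})^2\}$ is central as well. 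In the third case $\mu=-1$, so we use transparency of $T_{2N}(\alpha)=T_{N'}(\alpha)$ and of $\alpha^{(2N)}$, from which $\alpha^{(N'')}=\alpha^{(4N)}=(\alpha^{(2N)})^2$ is central.

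The main technical obstacle is to rigorously justify effect (ii) for stated arcs, particularly when the states at exchanged endpoints differ and the non-scalar state exchange relation~(E) is invoked in place of the scalar height-exchange. I would resolve this by passing to the bigon, where the stated skein algebra is isomorphic to $\OSL$ (Section~\ref{sec.bigon}), and verifying by a direct $q$-commutation computation using the relations \eqref{eq.OSL}--\eqref{eq.adda} and $\omega^{2N''}=1$ that the $N''$-th powers $a^{N''},b^{N''},c^{N''},d^{N''}$ are central in $\OSL$. Compatibility of $\Phi_\omega$ with the splitting homomorphism (Corollary~\ref{r.surfacePhi}(b)) then transfers centrality from the bigon to any triangulable surface via an ideal triangulation, and the non-triangulable cases are handled separately in the spirit of the proof of Corollary~\ref{r.surfacePhi}.
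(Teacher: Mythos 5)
Your core argument coincides with the paper's: the proof of (b) there likewise combines the transparency statement of Theorem~\ref{r.transparent} with boundary height-exchange moves, observing that reordering the $k$ same-state endpoints of $\alpha^{(k)}$ past an endpoint of another arc costs a factor $\omega^{\pm 2k}$, which equals $1$ for $k=N''$; part (a) is then deduced from (b) since $2N$ is a multiple of $N'$ and $4N$ of $N''$ (via $T_{2N}=T_{2N/N'}\circ T_{N'}$). Your parity analysis $(N,N',N'')\in\{(N,N,N),(N,N,2N),(N,2N,4N)\}$ is correct and is in fact more explicit than what the paper records.

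Where you go astray is in the final paragraph. First, the obstacle you flag is smaller than you think: the moves of Lemma~\ref{r.refl} exchange an endpoint of state $\nu$ with an endpoint of \emph{arbitrary} state at the cost of $q^{\pm\nu}$ together with a crossing change; relation (E) is never invoked, because the crossing change is exactly what transparency absorbs (note $\alpha^{(N'')}$ is $\mu^{N''/N}$-transparent, and $\mu^{N''/N}=+1$ in each of your three configurations). This is precisely the paper's remark that ``it may be necessary to use transparency to change a crossing before applying the allowed height exchange moves.'' Second, the bigon detour as you state it would not close the gap it is meant to close: $\Theta_\Delta(\alpha^{(N'')})$ is the sum over \emph{all} state-lifts of the $N''$ parallel copies, most of which assign different states to different copies at a given internal edge, so it is not a sum of products of $N''$-th powers of the generators of $\OSL$; centrality of $a^{N''},\dots,d^{N''}$ in $\OSL$ therefore does not transfer through the splitting homomorphism in the way you suggest. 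Fortunately that detour is unnecessary given the first point, and (modulo the fact that $\alpha^{(4N)}$ equals $(\alpha^{(2N)})^{2}$ only up to a nonzero scalar coming from same-state height exchanges) the rest of your argument stands.
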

\begin{proof}

We first look to prove $(b)$.  If $\alpha$ is a $\cN-$knot then centrality follows immediately from transparency.  So our result follows from the previous discussion on transparency.  If $\alpha$ is a $\pfS-$arc, then transparency is not enough enough to imply centrality, and in particular commuting with other $\pfS$-arcs having both endpoints on the same boundary component requires additional considerations to reorder the end points as needed. This can be accomplished through height exchange moves, where it may be necessary to use transparency to change a crossing before applying the allowed height exchange moves.  In particular, we have that these moves applied to the boundary of $\alpha^{(k)}$ will give a factor of $\omega^{2k}$.  This allows us to extend the previous results on transparency to centrality by utilizing that $\omega^{2N''}=1$ by definition.

Then part $(a)$ follows from noting that $2N$ is a multiple of $N'$ and $4N$ is a multiple of $N''$.
\end{proof}

\begin{remark}
In general, the ordinary power $\alpha^{k}$ is not central.  Moreover as seen below in Proposition \ref{nevercentral}, there exists $\alpha$ such that for all $k\geq 1$ and for all  roots of unity $\omega$ of order greater than 8, we have that $\alpha^k$ is not central.  
\end{remark}

\subsection{Framed powers compared to algebra powers}
\label{FramedVsAlg}
\begin{proposition}
\label{nevercentral}
If there is an arc on a punctured bordered surface $\fS$  such that both endpoints of the arc meet the same boundary edge, but the arc does not bound a disk, then there exists a stated arc $\alpha$ such that $\alpha^m$ is only central when $q^4=1$.
\end{proposition}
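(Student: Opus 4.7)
Let $c$ be the given arc in $\fS$ whose two endpoints lie on a single boundary edge $e$ and which does not bound a disk. The plan is to take $\alpha$ to be $c$ with both endpoints stated $+$, to construct a companion stated diagram $\beta$ so that $\alpha^m\beta = q^{4m}\beta\alpha^m$ using the height exchange moves of Lemma~\ref{r.refl}, and then to invoke the basis theorem (Theorem~\ref{surfbasis}) together with the hypothesis that $c$ does not bound a disk in order to show $\beta\alpha^m\neq 0$. Centrality of $\alpha^m$ will then force $q^{4m}=1$, giving $q^4=1$ in the case $m=1$.

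For $\beta$ I would use a parallel push-off of $c$ through a narrow collar of $e$, disjoint from $c$, with both endpoints stated $-$. In the product $\alpha\beta$ (with $\alpha$ on top) the four endpoints on $e$ read, from bottom to top, $-,-,+,+$, while in $\beta\alpha$ they read $+,+,-,-$. A sequence of four adjacent height exchange moves converts one ordering into the other; by the formulas of Lemma~\ref{r.refl}, each such swap moves a $+$ state down past a $-$ state and contributes a factor $q$. Accumulating these gives the commutation identity $\alpha\beta = q^{4}\beta\alpha$, and an induction on $m$ extends this to $\alpha^m\beta = q^{4m}\beta\alpha^m$.

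The main obstacle is to rule out the degenerate possibility $\beta\alpha^m=0$ in $\cS(\fS)$, which would render the identity above vacuous. Here I would appeal to Theorem~\ref{surfbasis}: the underlying tangle diagram of $\beta\alpha^m$ consists of $m+1$ pairwise disjoint parallel copies of $c$, and after further height exchange moves (contributing only a non-zero scalar) the states on $e$ can be reordered into the canonical form defining an element of $B(\fS)$. Precisely because $c$ does not bound a disk, this diagram represents a non-trivial basis element, and hence $\beta\alpha^m\neq 0$. Combining this with the commutation identity forces $q^{4m}=1$ whenever $\alpha^m$ is central; taking $m=1$ yields $q^4=1$ as required. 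The delicate step is the non-vanishing argument, which is where the topological hypothesis on $c$ enters in an essential way; the remainder is a direct bookkeeping with the height exchange formulas.
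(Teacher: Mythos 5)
There are two genuine gaps here, and both are fatal to the argument as written.

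First, the commutation identity $\alpha\beta=q^{4}\beta\alpha$ is not justified. Every one of the four endpoint swaps you invoke exchanges a $+$ state past a $-$ state, and Lemma~\ref{r.refl} does \emph{not} cover that case: the height exchange moves give a clean scalar only when the two endpoints being reordered carry the \emph{same} state. Reordering endpoints with opposite states is governed by the state exchange relation (E), which carries an extra ``returning arc'' term. In your situation that correction term joins an endpoint of a $+{+}$ copy of $c$ to an endpoint of the $-{-}$ copy, producing a trivial arc that relation (C) evaluates to a nonzero scalar; so $\alpha\beta$ and $\beta\alpha$ differ not merely by a power of $q$ but by additional lower-order terms. (Compare the bigon, where $ad-da=(q^{-2}-q^{2})bc\neq 0$ rather than $ad=q^{4}da$.) Second, even if the identity $\alpha^{m}\beta=q^{4m}\beta\alpha^{m}$ held, centrality of $\alpha^{m}$ would only force $q^{4m}=1$, not $q^{4}=1$. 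The proposition is needed for every $m$ (see the preceding Remark, which asserts non-centrality of $\alpha^{k}$ for all $k\geq 1$ whenever $\ord(\omega)>8$), and your test element is blind to the case where $q^{4}$ is a nontrivial $m$-th root of unity. The reduction ``taking $m=1$'' does not rescue the statement for $m\geq 2$.

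The paper's proof is built precisely to avoid both problems. It reduces by functoriality to the punctured monogon and chooses $\alpha$ to be the \emph{mixed-state} arc $x=\alpha_{-+}$, not the all-plus arc. The state exchange relation gives $x=y+z$ with $z$ (the peripheral loop) central and $y$ a rescaled $\alpha_{+-}$ satisfying the clean relation $wy=q^{4}yw$ against $w=\alpha_{++}$ (here the paper does the reordering via the figures, where the problematic correction terms are accounted for). Then $x^{m}=\sum_{k}\binom{m}{k}y^{k}z^{m-k}$ by the ordinary binomial theorem, and the commutator $wx^{m}-x^{m}w=\sum_{k}\binom{m}{k}(1-q^{4k})y^{k}z^{m-k}w$ vanishes, by linear independence of the $y^{k}z^{m-k}w$ (Theorem~\ref{surfbasis}), only if $1-q^{4k}=0$ for all $k$ --- in particular $k=1$ forces $q^{4}=1$ for every $m$. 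If you want to salvage your approach you would need to (i) replace the all-plus arc by one whose $m$-th power has a genuinely inhomogeneous expansion, and (ii) carry the state-exchange correction terms through the computation rather than citing Lemma~\ref{r.refl} for opposite-state swaps.
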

\begin{proof}  Utilizing functoriality this reduces to a computation in the punctured monogon.  This set up is seen in Figure \ref{fig:monogonarc}.

\begin{figure}[htpb]
    \centering
    \includegraphics[scale=.35]{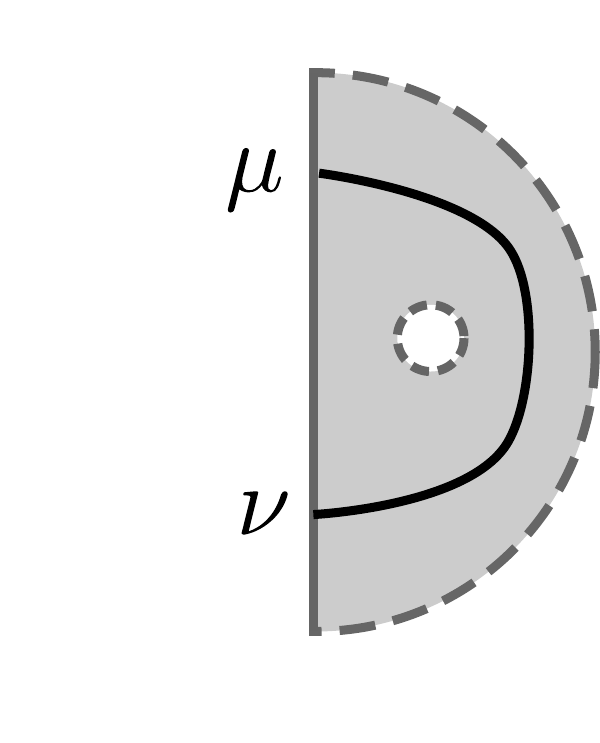}
    \caption{The image of an arc in a punctured bordered surface which does not bound a disk and both end points meet the same boundary edge in the punctured monogon.}
    \label{fig:monogonarc}
\end{figure}

Let $w$ be the arc in Figure \ref{fig:monogonarc} be stated with $\mu=\nu=+$, let $x$ be the arc in Figure \ref{fig:monogonarc} stated with $\mu=-$ and $\nu=+$ $-$, let $y$ be the arc in Figure \ref{fig:monogonarc} stated with $\mu=+$ and $\nu=-$ multiplied by $q^2$, and let $z$ denote the loop encircling the puncture multiplied by $q^{-1/2}$.  Then as seen in Figure \ref{fig:monogonproduct}
\[wy=q^4yw.\]
\begin{figure}[htpb]
    \centering
    \includegraphics[scale=.35]{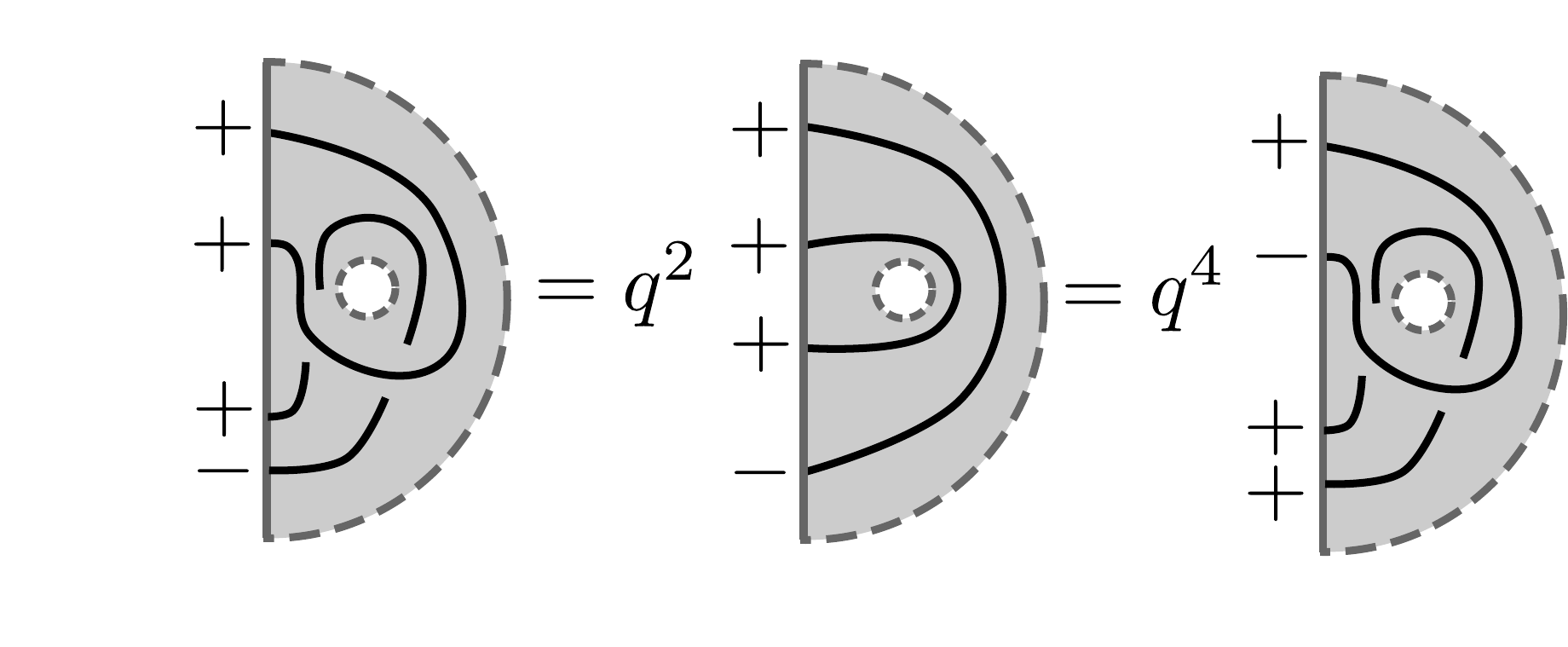}
    \caption{Using height exchange moves to see that $w$ and $y$ $q-$commute.}
    \label{fig:monogonproduct}
\end{figure}
Now utilizing the state exchange relation we have $x=y+z$, and so
\[x^m=(y+z)^m,\]
which as $z$ is a central element we may use the classical binomial theorem and 
\[x^m=\sum_{k=0}^m\binom{m}{k}y^{k}z^{m-k},\]
where we note that $\binom{m}{k}\neq 0$.

Then 
\[wx^m-x^mw=\sum_{j=0}^{m}\binom{m}{k}(1-q^{4k})y^kz^{m-k.}w\]
Now we observe that $\{y^{k}z^{N-k}\}$ form a linearly independent set in the stated skein algebra of the punctured monogon.  Using height exchange relations, we see that $y^k$ is a a power of $q$ multiple of the simple diagram of non-crossing arcs connecting $k$ positive and $k$ negative states. Then as $z$ is central we have that $y^{N-k}z^k$ is  a nonzero multiple of a simple diagram, which form a basis by Theorem \ref{surfbasis}, meaning they are linearly independent.  Now as $\{y^kz^{m-k}\}$ forms a linearly independent set, we additionally have $\{y^kz^{m-k}w\}$ forms a linearly independent set.  Then we have $a^kw=wa^k$ if and only if $1-q^{4k}=0$ for all $k$.

\end{proof}

\def\SP{\Sigma_{\cP}}

\section{A quantum torus driven point of view}
\label{QTorusSect}

This section is devoted to giving an interpretation of the Chebyshev-Frobenius homomorphism as it relates to a quantum trace map, meaning embeddings of the stated skein algebra of surfaces into quantum tori.

In this section let $\SM$ be a marked surface satisfying the following properties: 
\begin{itemize}
\item[($\star$)]
$\Sigma$ is the result of removing a finite set $\cV=\{v_1, \dots , v_m\}$ of interior points from  an oriented compact connected surface $\bS$ with non-empty boundary, and $\cP\subset \pS$ is a finite set such that every connected component of $\pS$ intersects non-trivially $\cP$. Additionally, $\SM$ is not a monogon or a bigon.
\end{itemize}

\no{

\subsection{Quantum tori and Frobenius homomorphisms}
Let $P$ be an anti-symmetric matrix with integer entries, $\cR$ is a commutative domain with an invertible element $q$, we define a quantum torus over $\cR[z_1,...,z_k]$ by
\[\mathbb{T}(P;\omega, \cR[z_1,...,k_k]):=\cR[z_1,...,z_k]\langle \{x_i\}_{i=1}^{|P|}|x_i x_j=\omega^{P_{ij}}x_j x_i\rangle.\]
We will drop $\cR[z_1,...,z_k]$ if it clear from the text.
For every positive integer $N$  there is an $\cR$-algebra homomorphism 
\[F_N:\mathbb{T}(P;\omega^{N^2}, \cR)\rightarrow \mathbb{T}(P;\omega, \cR)\]
given by 
\[F_N(x_i)=x_i^N,\]

and additionally, 
\[F_N(z_i)= T_N(v_i).\]  

In the remainder of this section all quantum tori will be over $\cR[v_1, \dots, v_m]$.
}

\subsection{Quasitriangulations and their associated quantum tori} In \cite{LY} it is shown that the skein algebra $\cS\SM$ can be embedded into a quantum torus which is an algebra with notably simple algebraic structure. Let us briefly recall the embedding. The idea is to find a large enough set of $q$-commuting elements in $\cS\SM$ and try to embed $\cS\SM$ into the quantum torus generated by these $q$-commuting elements.

\def\Ed{\cE_\partial}
\def\hEd{\widehat{\cE_\partial}}
\def\bE{\bar {\cE}}


A {\em $\cP$-arc} in $\Sigma$ is an immersion $\al:[0,1] \to \Sigma$ such that $\al(0), \al(1) \in \cP$ and the restriction of $\al$ onto $(0,1)$ is an embedding into $\Sigma\setminus \cP$. Two $\cP$-arcs are {\em $\cP$-disjoint} if they are disjoint in $\Sigma\setminus \cP$.
Two $\cP$-arcs are {\em $\cP$-isotopic}  if they are isotopic in the class of $\cP$-arc. A $\cP$-arc is {\em boundary} if it is $\cP$-isotopic to a $\cP$-arc which is in the boundary  $\pS$. A $\cP$-arc is {\em trivial} if it bounds a disk in $\Sigma$. 

 A {\em $\cP$-quasitriangulation $\cE$ of $\Sigma$} is a maximal collection of non-trivial, pairwise $\cP$-disjoint, and pairwise non $\cP$-isotopic $\cP$-arcs. Fix a $\cP$-quasitriangulation $\cE$ of $\Sigma$. Let $\Ed$ be the subset of $\cE$ consisting of all boundary $\cP$-arcs.


Consider a copy  $\hEd=\{ \hat e \mid e \in \Ed\}$ of $\Ed$, and let $\bE= \cE \sqcup \hEd$. We will consider $\bE$ as a subset of $\cS\SM$ by identifying $e\in \cE$ with the element $X_e\in \cS\SM$ as follows.  First, if $e$ is a $\cP$-arc having identical endpoints, which is a point $p\in \cP$, then let $e'\subset \Sigma\times (-1,1)$ be the same $e$ with the right incident half edge slightly raised so that it is higher than the left incident half edge, see Figure \ref{fig:legraise}.

\begin{figure}[htpb]
    \centering
    \includegraphics[scale=.45]{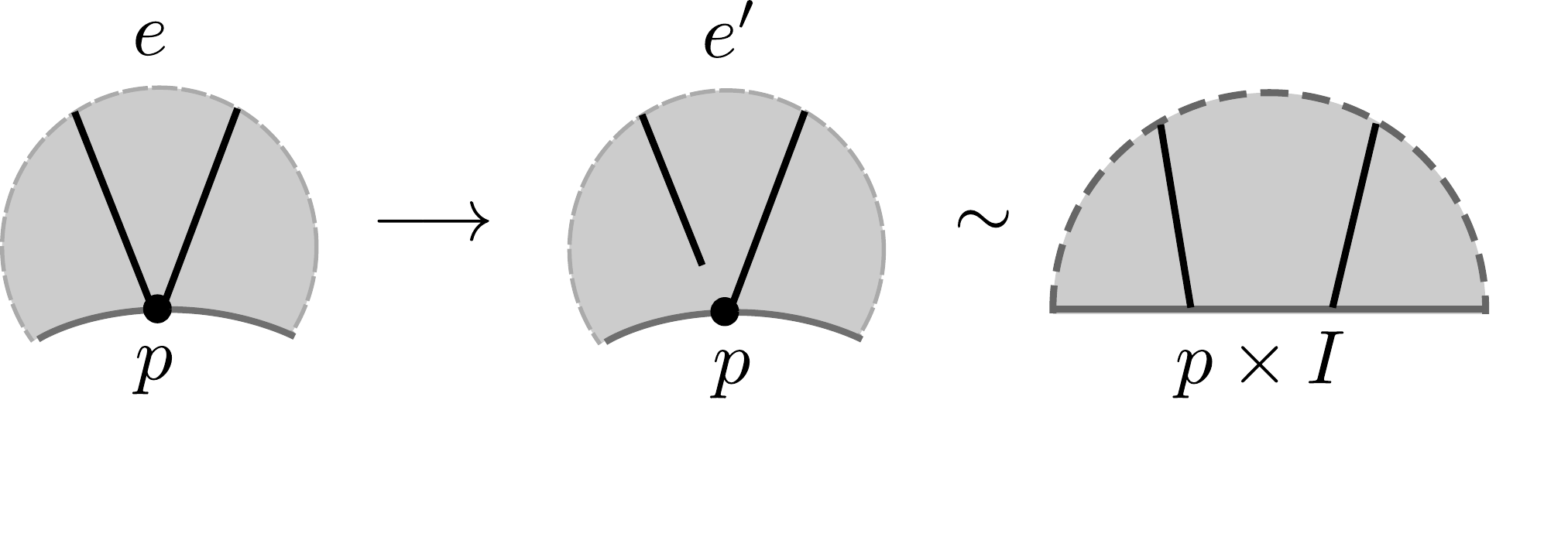}
    \caption{The assignment of $e'$ to a $\mathcal{P}$-arc with identical endpoints $p\in\mathcal{P}$.}
    \label{fig:legraise}
\end{figure}

Now define $X_e$ by:

\begin{itemize}
    \item If $e\in \cE$ has distinct endpoints, then $X_e\in \cS\SM$ is the arc $e$ with positive states at both endpoints. If $e\in \cE$ has identical  endpoints, then $X_e= \omega e'$, where both end points of $e'$ have positive states.
 
    \item If $\hat e\in \hEd$, where $e\in \Ed$ is a boundary $\mathcal{P}$-arc having distinct endpoints, then $X_{\hat e}\in \cS\SM$ is the arc $e$  with one positive state followed by a negative state where the ordering is given with respect to the orientation of the boundary.
    
    \item If $\hat e\in \hEd$, where $e\in \Ed$ is a boundary $\mathcal{P}$-arc having identical  endpoints, then $X_e= \omega^{-1} e'$, where the higher endpoint of $e'$ has  a negative state and the lower end point has a positive state.
\end{itemize}
The factors of $\omega$ and $\omega^{-1}$ are introduced so that $X_e$ is invariant under the reflection involution. For simplicity of notation, we identify $e\in \bE$ with $X_e\in \cS\SM$. The height exchange moves show that any two $e,e'\in \bE$ are $q$-commuting: For $a,b\in \bE$ have
\be 
ab = q^{P(a,b)} ba,
\label{eq.rel1}
\ee
where $P= P_{\bE}:\bE \times \bE\to \BZ$ is the 
\begin{itemize}
\item the number of occurrences of $b$ meeting a vertex counterclockwise to $a$ minus the number of occurrences of $b$ meeting a vertex clockwise to $a$, if $a,b\in \cE$, as seen in Figure \ref{fig:Pmatrix}.
\begin{figure}[htpb]
    \centering
    \includegraphics[scale=.45]{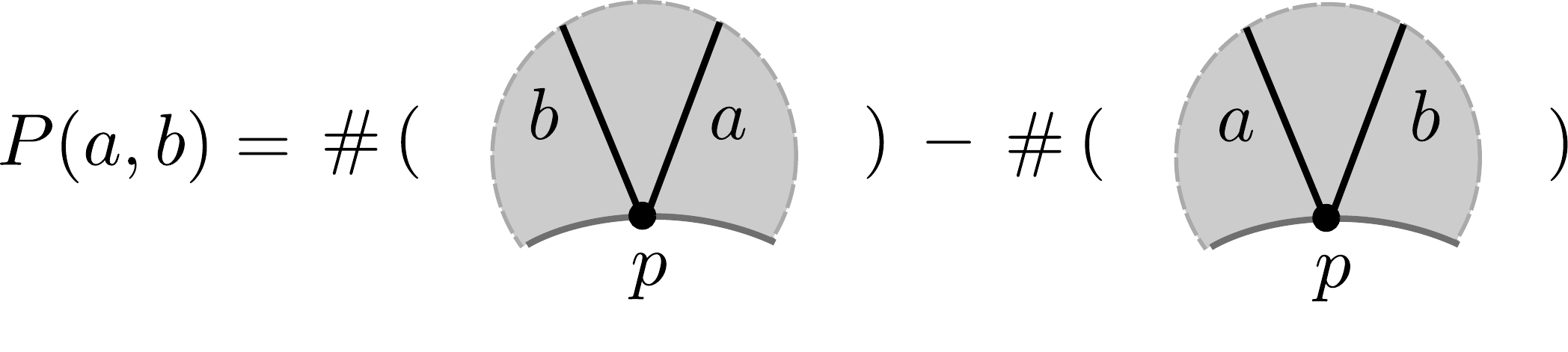}
    \caption{A count that determines how $a$ and $b$ $q$-commute through the application of height exchange moves.}
    \label{fig:Pmatrix}
\end{figure}

\item   the number of occurrences of $b$ meeting a vertex counterclockwise to $a$ added to the number of occurrences of $b$ meeting a vertex clockwise to $a$, if $a\in \hat{\cE}$ and $b \in \cE$, as seen in Figure \ref{fig:Pmatrixhat}.
\begin{figure}[htpb]
    \centering
    \includegraphics[scale=.45]{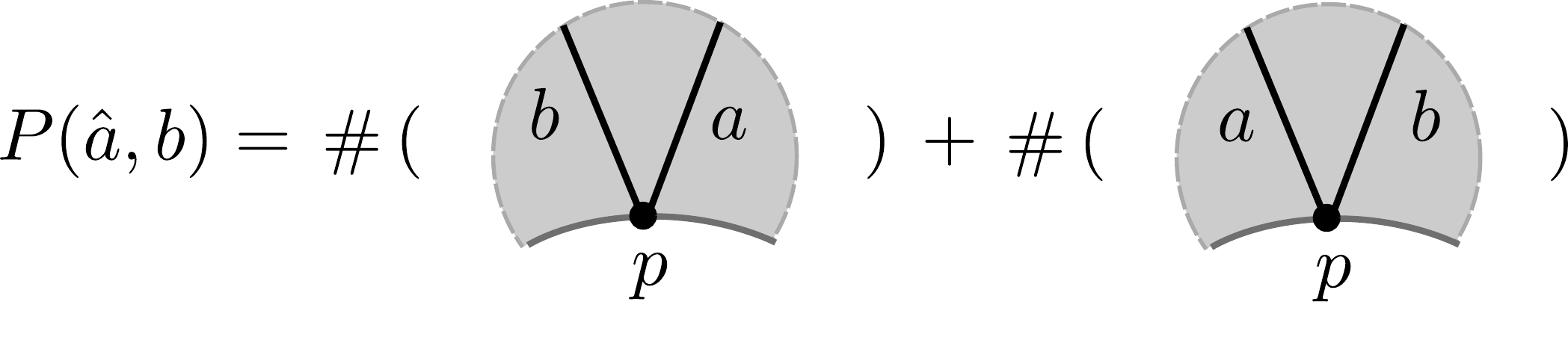}
    \caption{A count that determines how $\hat{a}$ and $b$ $q$-commute through the application of height exchange moves.}
    \label{fig:Pmatrixhat}
\end{figure}
\item $P(f,e)$ if $a=\hat e$ and $b=\hat f$.
\end{itemize}

Identify each interior point $v\in \cV$ with the element of $\cS\SM$ represented by  a small loop surrounding the puncture $v$. Let  $\cR[\cV]$ be the $\cR$-algebra of polynomials in the variables $v_i\in \cV$ with coefficients in $\cR$.   Then $\cR[\cV]$ is a subalgebra of the center of $\cS\SM$.

\no{Let $\cV=\{v_1,\dots, v_k\}$ be the set of interior punctures. Identify each $v_i$ with the element of $\cS\SM$ represented by  a small loop surrounding the puncture $v$. Let  $\cR[\cV]$ be the $\cR$-algebra of polynomials in the variables $v_i\in \cV$ with coefficients in $\cR$. As  
for $\bn=(n_1,\dots, n_k)\in \BN^k$ let $v^\bn :=\prod (v_i)^{n_i}$. Then $\{ v^\bn \mid  \bn \in \BN^k\} $ is an $\cR$-basis of $\cR[\cV]$. Clearly every element of $\cR[\cV]$ is central in $\cS\SM$.
}

\def\XE{ \cX_\omega(\Sigma; \cE) }
\def\XEp{ \cX_\omega^+(\Sigma; \cE) }
\def\bm{{\mathbf m }}
The algebra
$$ \cX_\omega(\Sigma; \cE):= \cR[\cV]\la a^{\pm 1} \mid a \in \bE\ra /\la ab = \omega^{2 P(a,b)} ba
\ra $$ is known as the {\em quantum torus associated to $P$}, with ground ring $\cR[\cV]$. \no{ Numerate elements of $\bE$ so that $\bE=\{ a_1,\dots, a_r\}$. For $\bm=(m_1,\dots, m_r)\in \BZ^r$ let 
$$a^\bm :=  \omega^{- \sum_{i<j} P(a_i, a_j)} a_1^{m_1} \dots a_r^{m_r}.$$
The prefactor $\omega^{- \sum_{i<j} P(a_i, a_j)}$ is added so that the result is invariant under the reflection involution. Then the set $\{ a^\bm \mid \bm \in \BZ^r\}$ is an $\cR[\cV]$-basis of $\XE$.} Informally this is the $\cR[\cV]$-algebra of Laurent polynomials in the variables $a\in \bE$ which $q$-commute according to the rule seen in equation \eqref{eq.rel1}. 
 The subalgebra spanned by non-negative power monomials, i.e. the $\cR[\cV]$-subalgebra generated by $a\in \bE$,  is known as the quantum space, and is denoted by $\XEp$.


The relation \eqref{eq.rel1} shows that there is a algebra homomorphism from $\XEp$ to $\cS\SM$ sending $e$ to $X_e$. Using the height exchange relation and the explicit basis of $\cS\SM$ one can easily show that this algebra homomorphism is an embedding. See \cite{LY,LY2} for details.

\begin{theorem}[\cite{LY}, Theorem $5.1$]
\label{qtrace}
Suppose $\SM$ is a marked surface satisfying ($\star$) and $\cE$ is a $\cP$-quasitriangulation of $\Sigma$. 
The algebra embedding $\XEp \embed \cS\SM$ can be extended to an algebra embedding 
\[\phi_{\cE,\omega}:\cS\SM\hookrightarrow \cX_\omega(\Sigma; \cE)\]
\end{theorem}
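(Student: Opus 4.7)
The plan is to construct $\phi_{\cE,\omega}$ by combining the splitting homomorphism along the arcs of $\cE$ with explicit embeddings for elementary pieces. First I would cut $\Sigma$ along all interior arcs of $\cE$ to produce a disjoint union $\bigsqcup_i T_i$, where each $T_i$ is one of a short list of elementary marked surfaces (ideal triangles and, in degenerate situations, once-punctured monogons or bigons). By iterating Theorem~\ref{surfacesplit}, this yields an algebra embedding
\[
\Theta_\cE : \cS_\omega\SM \hookrightarrow \bigotimes_i \cS_\omega(T_i).
\]

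For each elementary piece $T_i$, I would then construct a direct embedding into a quantum torus $\cX_\omega(T_i;\cE_i)$ whose generators correspond to the boundary arcs of $T_i$ together with their ``hatted'' partners and, in the monogon case, the loop around the interior puncture. For the ideal triangle one can use its braided tensor product description in terms of two copies of $\OSL \cong \SB$ (as alluded to in Subsection~\ref{sec.bigon}) together with the explicit embedding of $\OSL$ into the $2$-variable quantum torus generated by $a$ and $b$ with their inverses. For the once-punctured monogon one adjoins a central variable corresponding to the puncture loop. These cases can be checked by hand thanks to the explicit presentations available.

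Composing $\Theta_\cE$ with the tensor product of these elementary embeddings yields a map into $\bigotimes_i \cX_\omega(T_i;\cE_i)$. I would then show that this tensor product, after identifying the pair of generators arising from the two sides of each interior arc of $\cE$, is canonically isomorphic to $\cX_\omega(\Sigma;\cE)$. The crucial combinatorial point is that the matrix $P$ is local: its entries decompose as a sum of contributions from the vertices in $\cP$ at which two elements of $\bE$ meet simultaneously, so the commutation relations contributed by each piece add up to precisely the defining relations of $\cX_\omega(\Sigma;\cE)$. Injectivity of $\phi_{\cE,\omega}$ then follows from injectivity of $\Theta_\cE$ together with injectivity of each of the elementary embeddings, since the latter send distinct basis elements of $\cS_\omega(T_i)$ to distinct normal-form monomials.

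The main obstacle will be the identification step: an interior arc $e \in \cE$ contributes only one generator to $\cX_\omega(\Sigma;\cE)$, yet under $\Theta_\cE$ it produces stated boundary arc segments in two different pieces $T_i$ and $T_j$, and splitting sums over states on any interior arc. One must check that these contributions, when passed through the respective elementary quantum torus embeddings, yield the \emph{same} element of $\cX_\omega(\Sigma;\cE)$ after identification, and that the state sums collapse to monomial expressions. This requires careful bookkeeping of the height-exchange phases from Lemma~\ref{r.refl} and the reflection-invariance factors of $\omega^{\pm 1}$ built into the definition of $X_e$ for arcs with identical endpoints; this is where the technical core of the argument lies.
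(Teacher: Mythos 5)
The paper does not prove this statement; it is quoted from \cite{LY}, and the argument there (extending Muller \cite{Muller}) is of a different nature from yours: one shows that for every basis element $\al$ of $\cS\SM$ there is a monomial $m$ in the edges of $\cE$ such that $\al\, m$ lies in the image of $\XEp$ (resolving, via the skein relation, each crossing of $\al$ with an edge it is multiplied against), verifies that the edge monomials form an Ore set whose localization of $\XEp$ is $\XE$, and sets $\phi_{\cE,\omega}(\al)=(\al m)m^{-1}$; injectivity then comes from the basis of Theorem \ref{surfbasis} together with a leading-term argument. Your cut-and-glue route is modeled on the Bonahon--Wong shear-coordinate quantum trace, and it breaks down at exactly the step you defer. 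The tensor product $\bigotimes_i\cX_\omega(T_i;\cE_i)$ ``after identifying the two lifts of each interior arc'' is \emph{not} canonically isomorphic to $\XE$: in each elementary piece every edge is a boundary edge, so each $\cX_\omega(T_i;\cE_i)$ carries a hatted (mixed-state) generator for every edge, whereas in $\XE$ only the arcs in $\Ed$ have hatted partners. At best one gets an embedding $\XE\embed\bigotimes_i\cX_\omega(T_i;\cE_i)$, $e\mapsto e_1\ot e_2$, into a strictly larger torus, and the substantive content of the theorem becomes the claim that the image of $\cS\SM$ lands in this small subtorus. That containment is not bookkeeping: the splitting homomorphism applied to an arc crossing an interior edge produces lifts with $-$ states at the new endpoints, whose images under the elementary embeddings involve precisely the generators that have no counterpart in $\XE$, and showing that the state sum collapses amounts to reproving the Ptolemy-type division identities that drive Muller's argument.

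Two further points are asserted without justification and each is essentially a special case of the theorem itself: (i) the embedding of the stated skein algebra of a single ideal triangle (an infinite-dimensional algebra, isomorphic to $\OSL\,\underline{\otimes}\,\OSL$) into its rank-six quantum torus is not something that follows merely from ``explicit presentations''; and (ii) injectivity of the composite requires knowing that the subtorus $\XE$ meets the image of $\Theta_\cE$ correctly, not just that each factor map is injective. I would recommend abandoning the gluing strategy for this particular (lambda-length type) quantum torus and instead running the localization argument directly on $\cS\SM$, using the height exchange relations of Lemma \ref{r.refl} and the basis $B(\fS)$ to control leading terms.
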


When $\Sigma$ has no interior points, the restriction of $\phi_{\cE,\omega}$ to the Muller subalgebra $\cS^+\SM$ was first constructed by Muller \cite{Muller}, and if furthermore $\omega=1$, then this  map expresses the Penner lambda length of loop or a $\cP$-arc as a Laurent polynomial in the lambda length of the edges of the quasitriangulation. Thus $\phi_{\cE,\omega}$ could be called a ``quantum trace map".

The algebra embedding $\phi_{\cE,\omega}:\cS\SM\hookrightarrow \cX_\omega(\Sigma; \cE)$ can be considered as a coordinate map of $\cS\SM$, where the coordinates depend on a $\cP$-quasitriangulation.

\subsection{Restricting the Frobenius homomorphism} 

Given any coordinate system, one can define functions in terms of these coordinates. However, such a function makes sense only when the definition does not depend on the coordinate system itself. We look to see how the choice of a $\cP$-quasitriangulation provides an analogue of coordinates for the stated skein algebra of a marked surface, and how the Chebyshev-Frobenius homomorphism fits into this picture.

The simple nature of the quantum torus $\XE$ allows us to define the following $\cR$-algebra homomorphism, known as the $N$-th Frobenius homomorphism: For every positive integer $N$  let 
\[F_N:\mathcal X_{\omega^{N^2}}(\Sigma;\cE) \rightarrow \XE\]
be the $\cR$-algebra homomorphism  given by 
\begin{align}
F_N(a)& = a ^N \quad   \text{for } \ a \in \bE,\\
F_N(v)& =  T_N(v)  \quad \text{for } \ v \in \cV.
\end{align} 
It is easy to check that $F_N$ respects the defining relations.

Following Theorem \ref{qtrace} we have that for every $\omega$ a non-zero complex number and every pseudo-triangulation $\cE$, we have the following diagram: 
\be
\begin{tikzcd}
\cS_{\omega^{N^2}}\SM\arrow[r,hook]
\arrow[d,dashed,"?"]  
& \cX_{\omega^{N^2}}(\Sigma; \cE)\arrow[d,"F_N"] \\
\cS_{\omega}\SM\arrow[r,hook] & \cX_{\omega}(\Sigma; \cE)
\end{tikzcd}
\ee
and we may naturally ask when there exists a restriction of $F_N$ to the corresponding stated skein algebras, and when is this restriction independent of the underlying pseudo-triangulation.

\def\XE{ \cX_\omega(\Sigma; \cE) }
\begin{theorem}
\label{FrobeniusRestrict}
Suppose $\omega\in \mathbb{C}^{\times}$, $N\geq 2$, and $\eta=\omega^{N^2}$.  Suppose $\SM$ is a marked surface satisfying ($\star$) and has at least two different $\cP$-quasitriangulation.  The $N$-th Frobenius homomorphism $F_N:\mathcal X_{\omega^{N^2}}(\Sigma;\cE) \rightarrow \XE$ restricts to a map $\mathscr{S}_\eta\SM\rightarrow \mathscr{S}_\omega\SM$, and the restriction is independent of the choice of quasitriangulation if and only if $\omega$ is a root of unity such that $\ord(\omega^8)=N$.  Moreover, when $\omega$ is a root of unity with $N=\ord(\omega^8)$, the Chebyshev-Frobenius homomorphism, $\Phi_\omega$, is the unique restriction of $F_N$.

\end{theorem}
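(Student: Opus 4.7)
The plan is to reduce the theorem, by restriction to the positive submodule, to the corresponding statement already proved in \cite{LP}, and then extend to the full stated skein algebra by checking compatibility on the additional hatted generators.

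First I would note that since $\phi_{\cE,\omega}$ is injective (Theorem \ref{qtrace}), for any fixed $\cE$ a restriction of $F_N$ to an $\cR$-algebra map $\cS_\eta\SM \to \cS_\omega\SM$ is automatically unique if it exists. Thus the theorem reduces to (a) determining when $F_N$ sends $\phi_{\cE,\eta}(\cS_\eta\SM)$ into $\phi_{\cE,\omega}(\cS_\omega\SM)$, and (b) showing that the resulting map is independent of $\cE$ and coincides with $\Phi_\omega$.

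For the only-if direction I would restrict every object to its positive counterpart. The embeddings $\phi_{\cE,\eta}$ and $\phi_{\cE,\omega}$ restrict to embeddings of the Muller subalgebras $\cS^+_\eta\SM$ and $\cS^+_\omega\SM$ into the respective quantum spaces $\cX^+_\eta(\Sigma;\cE)$ and $\cX^+_\omega(\Sigma;\cE)$, and $F_N$ clearly carries the former quantum space into the latter. Any full restriction $\cS_\eta\SM \to \cS_\omega\SM$ would therefore descend to the positive case. The corresponding positive statement from \cite{LP} asserts that such a positive restriction exists and is independent of $\cE$ precisely when $\omega$ is a root of unity with $N=\ord(\omega^8)$, which forces the same condition in our setting.

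Conversely, assuming $\omega$ is such a root of unity, Corollary \ref{r.surfacePhi} provides the algebra homomorphism $\Phi_\omega: \cS_\eta\SM \to \cS_\omega\SM$, defined intrinsically without reference to any quasitriangulation. I would verify the identity
\[\phi_{\cE,\omega} \circ \Phi_\omega = F_N \circ \phi_{\cE,\eta}\]
by checking that these two algebra homomorphisms agree on a generating set of the image of $\phi_{\cE,\eta}$. By \cite{LP} the identity already holds on the positive subalgebra, covering the edge generators $X_e$ with $e\in\cE$ and the puncture loops $v\in\cV$. The remaining generators are the hatted elements $X_{\hat e}$ for $e \in \Ed$. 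For such an arc, Corollary \ref{r.surfacePhi} gives $\Phi_\omega(X_{\hat e}) = X_{\hat e}^{(N)}$, and I would rewrite this framed $N$-th power as a scalar multiple of the algebraic $N$-th power $X_{\hat e}^N$ using iterated height exchange moves (Lemma \ref{r.refl}); the scalar collapses to $1$ precisely under the hypothesis $\ord(\omega^8) = N$, by exactly the same arithmetic underlying the proof of Lemma \ref{Square}. It then follows that $\phi_{\cE,\omega}(X_{\hat e}^{(N)}) = \hat e^N = F_N(\hat e) = F_N(\phi_{\cE,\eta}(X_{\hat e}))$, completing the verification. The uniqueness of the restriction and its identification with $\Phi_\omega$ are then immediate from the injectivity noted above, and independence of $\cE$ is built into $\Phi_\omega$ by construction.

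The main obstacle is the mixed-state computation for $X_{\hat e}$: unlike the purely positive case handled in \cite{LP}, the framed $N$-th power produces endpoints with both $+$ and $-$ states at each marked point, whose reorderings contribute opposing height-exchange factors via \eqref{eq.reor} and \eqref{eq.reorsecond}. One must carefully track the way the normalization of $X_{\hat e}$ by a compensating power of $\omega$ interacts with these factors, and verify that the net scalar collapses to $1$ in $\cS_\omega$ exactly for the prescribed class of roots of unity. This is the step where the precise hypothesis $N=\ord(\omega^8)$ (rather than, say, $\ord(\omega^4)$ or $\ord(\omega^2)$) is doing the essential arithmetic work.
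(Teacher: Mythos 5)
Your proposal follows essentially the same route as the paper: the only-if direction is reduced to Theorem 8.2 of \cite{LP} via the positive submodules and their quantum spaces, and the if direction is verified on the generators $X_e$, $X_{\hat e}$ (and the puncture loops) by comparing framed and algebraic $N$-th powers through height exchange moves, with injectivity of $\phi_{\cE,\omega}$ giving uniqueness. One small correction: in the generator check the scalar matching comes purely from the definition $\eta=\omega^{N^2}$ (e.g.\ $(e')^{(N)}=\omega^{\pm N(N-1)}(e')^{N}$ combined with the $\omega^{\mp 1}$ normalization of $X_{\hat e}$ for arcs with identical endpoints), not from the quantum-binomial arithmetic of Lemma \ref{Square}; the hypothesis $N=\ord(\omega^8)$ enters only through the existence of $\Phi_\omega$ and the obstruction from \cite{LP}.
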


\begin{proof}
First define $$ \cX_\omega(\Sigma; \cE^+):= \cR[\cV]\la a^{\pm 1} \mid a \in \cE\ra /ab = \omega^{2 P(a,b)} ba
\ra $$ to be the restriction of $\cX_\omega(\Sigma;\cE)$ to only the positively stated $\cP$-arcs of the quasi-triangulation $\cE$.
Then utilizing the notation of Subsection \ref{positivesubmod}, and defining the restriction of $\phi_{\cE,\omega}$ to the positive submodule to be $\phi_{\cE,\omega}^+$, we observe that by the definition of $\cX_\omega(\Sigma;\cE)$ and $\cX_\omega(\Sigma; \cE^+)$ each face of the following diagram commutes
\[\begin{tikzcd}
 \mathscr{S}_\eta\SM\arrow[rrr,hook,"\phi_{\cE,\eta}"] \arrow[ddd,"\Phi_\omega"'] & & & \cX_{\eta}(\Sigma; \cE) \arrow[ddd,"F_N"]\\
 &\mathscr{S}_\eta^+\SM\arrow[r,hook,"\phi^+_{\cE,\eta}"]\arrow[d,"\Phi_\omega^+"']\arrow[lu,hook] & \cX_\eta(\Sigma; \cE^+)\arrow[d,"F_N"]\arrow[ru,hook] & \\
 &\mathscr{S}_\omega^+\SM\arrow[r,hook,"\phi^+_{\cE,\omega}"']\arrow[ld,hook] & \cX_\omega(\Sigma; \cE^+)\arrow[dr,hook] & \\
 \mathscr{S}_{\omega}\SM\arrow[rrr,hook,"\phi_{\cE,\omega}"'] & & &\cX_{\omega}(\Sigma; \cE) \\
 \end{tikzcd}
\]

From this we look to leverage results of the second author and Paprocki for the positive submodule to see that if $\omega$ is not a root of unity, then the Frobenius map of quantum tori will not restrict to the stated skein algebra of $\SM$.  Namely a rephrasing of Theorem $8.2$ in \cite{LP} gives the following.  As $\omega\in \mathbb{C}^{\times}$, $N\geq 2$, and $\eta=\omega^{N^2}$ and $\SM$ is a marked surface satisfying ($\star$), then if $F_N: \cX_{\omega^{N^2}}(\Sigma; \cE^+)\rightarrow  \cX_\omega(\Sigma; \cE^+)$ restricts to a map $\mathscr{S}^+_{\omega^{N^2}}\SM\rightarrow \mathscr{S}^+_\omega\SM$ for all quasitriangulations $\cE$ and the restriction does not depend on the quasitriangulations, we have that $\omega$ must be a root of unity and $\ord(\omega^8)=N$.

With this in mind it suffices to show that when $\ord(\omega^8)=N$ the outer square of the above diagram communtes, meaning for any quasitriangualtion the restriction to $\mathscr{S}\SM$ is the Chebyshev-Frobenius homomorphism.  As $\mathscr{S}\SM$ contains $\cX_+$, in addition to noting equivalence on the ground ring $\mathcal{R}[v_1,\dots,v_m]$, it will suffice to check for any $p\in \cX^+(\Sigma,\cE)$, that
\[F_N(\phi_{\cE,\eta}(p))=\phi_{\cE,\omega}(\Phi_\omega(p)).\]

We have $3$ cases to consider.

In the first case we have a $\alpha\in \cE\sqcup \hat{\cE}$, such that the endpoints meets two distinct marked points.  Here we have $\alpha=X_\alpha$.  In this case, the framed power and algebra power are equal and
\[F_N(\phi_{\cE,\eta}(X_\alpha))=F_N(X_\alpha)=X_\alpha^N=\phi_{\cE,\omega}(\alpha)^N=\phi_{\cE,\omega}(\alpha)^{(N)}=\phi_{\cE,\omega}(\Phi_\omega(\alpha))=\phi_{\cE,\omega}(\Phi_\omega(X_\alpha)).\] 

In the next case we have a $\beta\in \cE$, such that the endpoints meet a single marked point.  Here we have $X_\beta=\omega\beta$.  In this case repeated applications of height exchange moves implies
\[\beta^{(N)}=\omega^{-N(N-1)}\beta^N\]
and
\begin{align*} 
F_N(\phi_{\cE,\eta}(X_\beta))&=F_N(X_\beta)=X_\beta^N=(\omega\phi_{\cE,\omega}(\beta))^N= \phi_{\cE,\omega}(\omega^{N}\beta^N)= \phi_{\cE,\omega}(\omega^{N^2}\beta^{(N)})\\
&=\phi_{\cE,\omega}(\eta\beta^{(N)}) =\phi_{\cE,\omega}(\eta\Phi_\omega(\beta))=\phi_{\cE,\omega}(\Phi_\omega(\eta\beta))=\phi_{\cE,\omega}(\Phi(X_\beta)).\\
\end{align*}

In the final case we have $\gamma\in \hat{\cE}$, meet at a single marked point.  Here we have $X_\gamma=\omega^{-1}\gamma$.  In this case repeated applications of height exchange moves implies
\[\gamma^{(N)}=\omega^{N(N-1)}\gamma^N\]
and

\begin{align*} 
F_N(\phi_{\cE,\eta}(X_\gamma))&=F_N(X_\gamma)=X_\gamma^N=(\omega^{-1}\phi_{\cE,\omega}(\gamma))^N= \phi_{\cE,\omega}(\omega^{-N}\gamma^N)= \phi_{\cE,\omega}(\omega^{-N^2}\gamma^{(N)})\\
&=\phi_{\cE,\omega}(\eta^{-1}\gamma^{(N)}) =\phi_{\cE,\omega}(\eta^{-1}\Phi_\omega(\gamma))=\phi_{\cE,\omega}(\Phi_\omega(\eta^{-1}\gamma))=\phi_{\cE,\omega}(\Phi(X_\gamma)).\\
\end{align*}

Thus we have that when $\omega$ is a root of unity such that $\ord(\omega^8)=N$ and $\eta=\omega^{N^2}$, then the Chebyshev-Frobenius homomorphism is a restriction of the Frobenius homomorphism of quantum tori as desired.

\end{proof}

\section{A quantum group driven point of view}
\label{QGroupSect}
\label{Appen}
This section is devoted to giving an interpretation of the Chebyshev-Frobenius homomorphism rooted in the theory of quantum groups. 

Recall that the stated skein algebra $\SB$ of the bigon is isomorphic to the co-braided Hopf algebra $\OSL$,  meaning in addition to the definition in Subsection \ref{sec.bigon}, we have the following structure on $\mathcal{O}_{q^2}(SL(2)).$
\begin{definition}
\label{co-Rmat}
$\mathcal{O}_{q^2}(SL(2))$ is co-braided with co-R-matrix
\[\rho:\mathcal{O}_{q^2}(Sl(2))\otimes{O}_{q^2}(Sl(2))\rightarrow \mathcal{R}\]
defined on generators as 
\[\rho\left(\begin{array}{cccc}
a\otimes a & b\otimes b & a\otimes b & b\otimes a  \\
c\otimes c & d\otimes d & c\otimes d & d\otimes c \\
a\otimes c & b\otimes d & a\otimes d & b\otimes c \\
c\otimes a & d\otimes b & c\otimes b & d\otimes a \\
\end{array}\right)=\left(\begin{array}{cccc}
q & 0 & 0 & 0  \\
0 & q & 0 & 0 \\
0 & 0 & q^{-1} & q-q^{-3} \\
0 & 0 & 0 & q^{-1} \\
\end{array}\right)\]
\end{definition}
Furthermore, the isomorphism mentioned above respects this structure and gives a geometric description of the co-braiding as seen in Figure \ref{fig:CoBraid}.

\begin{figure}[htpb]
    \centering
    \includegraphics[scale=.35]{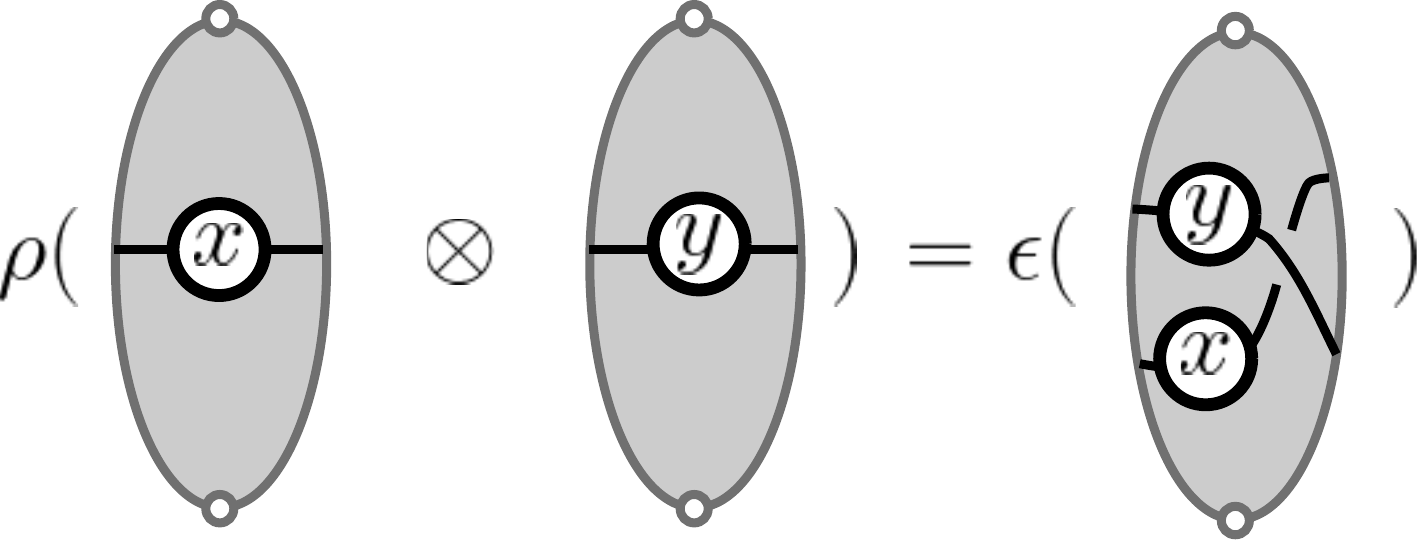}
    \caption{The co-braiding on $\mathscr{S}(\mathcal{B})$}
    \label{fig:CoBraid}
\end{figure}

Recall the quantized enveloping algebra $U_{q^2}(\mathfrak{sl}_2)$ is the Hopf algebra generated over $\mathbb{Q}(q^{1/2})$ by $K^{\pm}, E, F$ with relations
\[KE=q^4EK,\quad KF=q^{-4}FK,\quad [E,F]=\frac{K-K^{-1}}{q^2-q^{-2}}.\]
with coproduct and antipode given by 
\[\Delta(K)=K\otimes K, \Delta(E)=1\otimes E+ E\otimes K, \Delta(F)=K^{-1}\otimes F+ F\otimes 1\]
\[S(K)=K^{-1}, S(E)=-EK^{-1}, S(F)=-KF.\]
Additionally, an integral refinement of the quantized enveloping algebra was introduced by Lusztig, denoted $U^L_{q^2}(\mathfrak{sl}_2)$.  This is the subalgebra of $U_{q^2}(\mathfrak{sl}_2)$ generated by $K^{\pm 1}$ and the divided powers 
\[E^{(r)}:=\frac{E^r}{[r]!},\]
\[F^{(r)}:=\frac{F^r}{[r]!}.\]

Moreover, for $q^{\frac{1}{2}}$ specialized to a root of unity $\omega$, such that $\ord(\omega^8)=N$, we have this subalgebra is also generated by $K^{\pm}, E, F, E^{(N)},F^{(N)}$, as seen in Definition-Proposition $9.3.1$ of \cite{CP}.

There is a Hopf pairing
\[\langle , \rangle_{q^2}:U_{q^2}(\mathfrak{sl}_2)\otimes \mathcal{O}_{q^2}(SL(2))\rightarrow \mathbb{Q}(q^{1/2}),\]
which is a non-degenerate and 
\begin{align*}
&\langle \left( \begin{array}{cc}
a & b\\
c & d
\end{array}\right),K\rangle=\left( \begin{array}{cc}
q^2 & 0\\
0 & q^{-2}
\end{array}\right)\\
&\langle \left( \begin{array}{cc}
a & b\\
c & d
\end{array}\right),E\rangle=\left( \begin{array}{cc}
0 & 1\\
0 & 0
\end{array}\right)\\
&\langle \left( \begin{array}{cc}
a & b\\
c & d
\end{array}\right),F\rangle=\left( \begin{array}{cc}
0 & 0\\
1 & 0
\end{array}\right)
\end{align*}

This non-degenerate Hopf pairing $\langle , \rangle$ restricted to $U^L_{q^2}(\mathfrak{sl}_2)$ is integral, meaning
\[U^L_{q^2}(\mathfrak{sl}_2)\otimes \mathcal{O}_{q^2}(SL(2))\rightarrow \mathbb{Z}[q^{\pm 1/2}],\]

see for example Lemma $6.1$ of \cite{DeConLyu}, and note that the integrality can be deduced from the calculations in Appendix \ref{HopfPairCompute}.

The following propositions describe the quantum Frobenius map defined by Lusztig for quantized enveloping algebras at roots of unity.
\begin{proposition}[Chapter $35$ \cite{Lus2}]
Let $\omega$ be a root of unity, with $N=\ord(\omega^8)$ and $\eta=\omega^{N^2}$.  The quantum Frobenius map is a homomorphism of Hopf algebras 
\[f:U^L_{\omega^4}(\mathfrak{sl}_2)\rightarrow U^L_{\eta^4}(\mathfrak{sl}_2).\]
such that 
\begin{align*}
&f(K)=(-1)^{N+1}K\\
&f(E)=0\\
&f(F)=0\\
&f(E^{(N)})=E\\
&f(F^{(N)})=F
\end{align*}
This map is a surjective homomorphism of quasi-triangular Hopf algebras.
\end{proposition}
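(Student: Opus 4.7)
The plan is to follow Lusztig's construction from Chapter 35 of \cite{Lus2}, specialized to the rank-one case of $\mathfrak{sl}_2$. I would first define $f$ on the listed generators $K^{\pm 1}, E, F, E^{(N)}, F^{(N)}$ by the prescribed formulas and then verify that $f$ extends to a well-defined algebra homomorphism by checking each defining relation of $U^L_{\omega^4}(\mathfrak{sl}_2)$. Relations purely among $K^{\pm 1}, E, F$ are preserved trivially because $f(E) = f(F) = 0$. For the commutation $KE^{(N)} = \omega^{8N} E^{(N)} K$ (derived by iterating $KE = \omega^8 EK$), both sides land on the target relation $KE = \eta^8 EK$ once one uses $\mathrm{ord}(\omega^8) = N$ and $\eta^8 = \omega^{8N^2} = (\omega^{8N})^N = 1$. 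The decisive check is the divided-power commutator
\[ [E^{(N)}, F^{(N)}] = \sum_{t=1}^{N} F^{(N-t)}\, h_{N,t}(K)\, E^{(N-t)}, \]
where each $h_{N,t}(K)$ is a Laurent expression in $K$ built from $\omega^4$-integers. Under $f$, every term with $1 \le t \le N-1$ vanishes because $f(E^{(r)}) = 0$ for $0 < r < N$ (an immediate consequence of $f(E) = 0$ together with $[r]!_{\omega^4}$ being invertible in that range), and the $t = N$ term must become $(K - K^{-1})/(\eta^4 - \eta^{-4})$ in the target; a careful $q$-integer expansion shows that the sign $(-1)^{N+1}$ in $f(K)$ is precisely what is forced.

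Next I would verify Hopf-algebra compatibility. Using the coproduct formula
\[ \Delta(E^{(N)}) = \sum_{r+s=N} \omega^{8rs}\, E^{(r)} K^{s} \otimes E^{(s)}, \]
every intermediate $0 < r < N$ term is annihilated by $f \otimes f$, leaving only the boundary contributions $K^{N} \otimes E + E \otimes 1$. Since $\eta^{8} = 1$, we have $K^{N} = \pm K$ in the target with parity $(-1)^{N+1}$, matching $\Delta(E) = 1 \otimes E + E \otimes K$. The $F^{(N)}$ analogue is symmetric; the counit check is immediate since $\epsilon(E^{(N)}) = 0 = \epsilon(E)$; and the antipode check follows because $S(E^{(N)})$ is, up to an $\omega$-power, $K^{-N} E^{(N)}$, which maps consistently to $S(E) = -EK^{-1}$.

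Surjectivity then follows immediately: $f$ hits $K^{\pm 1}$ up to sign via $f(K^{\pm 1}) = (-1)^{N+1} K^{\pm 1}$, and $f(E^{(N)}) = E$, $f(F^{(N)}) = F$. Since $\eta^{4} \in \{\pm 1\}$ makes $U^{L}_{\eta^{4}}(\mathfrak{sl}_2)$ either the classical enveloping algebra of $\mathfrak{sl}_2$ or its sign twist, it is generated over $\mathbb{C}$ by $K^{\pm 1}, E, F$. Preservation of the quasi-triangular structure then comes from Lusztig's general argument, since the universal $R$-matrix lies in a completion of the divided-power algebra and its higher-order components are killed by $f$.

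The main obstacle will be the bookkeeping in the divided-power commutator: one must combine the vanishing of $\omega^{4}$-binomials ${N \brack k}_{\omega^4}$ for $0 < k < N$ (part (b) of Lemma \ref{qfacts}) with an explicit Laurent expansion of $h_{N,N}(K)$ to read off exactly the factor $(-1)^{N+1}$. This single sign consistency simultaneously pins down $f(K)$ and forces compatibility with $\Delta$, so all the nontrivial content of the theorem is concentrated in that one calculation.
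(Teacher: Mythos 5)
The paper offers no proof of this proposition at all: it is quoted as a black box from Chapter~35 of Lusztig's book, so there is nothing internal to compare your argument against. Judged on its own terms, your outline follows the standard route (intermediate divided powers die because $[r]!_{\omega^4}$ is invertible for $0<r<N$, vanishing of the $q$-binomials ${N \brack k}_{\omega^4}$, sign bookkeeping in the $t=N$ term of the commutator identity), and those parts are sound. But two steps as written would fail.

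First, the coproduct check. Applying $f\otimes f$ to $\Delta(E^{(N)})$ leaves the boundary terms $1\otimes E$ and $E\otimes K^{N}$ (with the paper's convention $\Delta(E)=1\otimes E+E\otimes K$), and you reconcile this with $\Delta(E)$ by asserting that ``$K^{N}=\pm K$ in the target.'' That identity is false: in $U^{L}_{\eta^4}(\mathfrak{sl}_2)$ the element $K$ is group-like of infinite order, and $\eta^{8}=1$ only gives $K E=EK$, not $K^{N}=\pm K$. This mismatch is precisely why Lusztig's Frobenius lands in the quantum group attached to a \emph{rescaled} Cartan datum (the target $K$ plays the role of the source $K^{N}$), or equivalently why one must pass to a quotient in which $K^{2N}=1$; without importing that part of the construction the Hopf-compatibility check does not close. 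Second, the Cartan part of the algebra relations. The source relation $[E,F]=(K-K^{-1})/(\omega^{4}-\omega^{-4})$ has nonzero right-hand side when $N>1$, yet $f(E)=f(F)=0$ forces its image to vanish; and in your $t=N$ term the would-be target expression $(K-K^{-1})/(\eta^{4}-\eta^{-4})$ has zero denominator since $\eta^{8}=1$. Both issues are resolved only by working in the integral form, where the Cartan divided powers $\left[\begin{smallmatrix} K;0 \\ t \end{smallmatrix}\right]$ are genuine generators whose images under $f$ must be prescribed (zero for $0<t<N$, the target's $\left[\begin{smallmatrix} K;0 \\ 1 \end{smallmatrix}\right]$ for $t=N$) and checked for consistency. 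Your sketch never specifies $f$ on these elements, so the ``decisive check'' you identify is not actually completed. These are exactly the points Lusztig's Chapter~35 is organized around, which is presumably why the authors cite it rather than reprove it.
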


\begin{theorem}
\label{uniqueness} 

Let $\Phi_\omega$ be the Chebyshev-Frobenius homomorphism.  We have the Frobenius homomorphism in the sense of Lusztig is equal to the Hopf dual of the Chebyshev-Frobenius homomorphism for the stated skein algebra of the bigon, $\Phi_\omega^*:U_{\omega^4}(\mathfrak{sl}_2)\rightarrow U_{\eta^4}(\mathfrak{sl}_2)$.  
\end{theorem}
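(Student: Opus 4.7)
The plan is to dualize the Chebyshev--Frobenius map through the non-degenerate Hopf pairing $\langle\cdot,\cdot\rangle_{q^2}\colon U^L_{q^2}(\mathfrak{sl}_2)\otimes \mathcal{O}_{q^2}(SL(2))\to\mathcal{R}$, and check agreement with Lusztig's Frobenius $f$ on a generating set. First I will make $\Phi_\omega$ explicit on $\mathscr{S}(\mathfrak{B}) \cong \OSL$. Since the algebra product on the bigon is vertical stacking and the framing of arcs is vertical, the framed $N$-th power of a stated core arc coincides with its algebraic $N$-th power; under the identification $\alpha_{\mu\nu}\mapsto\{a,b,c,d\}$ this yields $\Phi_\omega(a)=a^N$, $\Phi_\omega(b)=b^N$, $\Phi_\omega(c)=c^N$, $\Phi_\omega(d)=d^N$. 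Corollary~\ref{r.surfacePhi} gives multiplicativity, and Theorem~\ref{main}(b) together with the fact that bigon splitting realizes the coproduct of $\OSL$ gives comultiplicativity, so $\Phi_\omega$ is a Hopf algebra morphism.

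Second, the pairing produces a dual Hopf algebra morphism $\Phi_\omega^\ast\colon U^L_{\omega^4}(\mathfrak{sl}_2) \to U^L_{\eta^4}(\mathfrak{sl}_2)$ via $\langle\Phi_\omega^\ast(u),x\rangle_\eta = \langle u,\Phi_\omega(x)\rangle_\omega$; the parameter assignments align thanks to the paper's convention $q^{1/2}=\omega$, which gives $q^2=\omega^4$ on both the $\mathcal{O}$-side and the $U^L$-side of the pairing. Since both $\Phi_\omega^\ast$ and $f$ are Hopf algebra morphisms between the same algebras, it suffices to check that they coincide on the Lusztig generators $K^{\pm 1},E,F,E^{(N)},F^{(N)}$, and for each such $u$ it suffices to verify the adjunction identity against $x\in\{a,b,c,d\}$. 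For $K$ this reduces to the scalar identity $\omega^{-4N(N-1)}=(-1)^{N+1}$, which holds by a short case analysis on the possible orders $N,2N,4N,8N$ of $\omega$ (all forced by $\ord(\omega^8)=N$). For $E,F$ the iterated coproduct produces terms whose coefficients are $q^4$-binomials ${N\brack k}_{q^4}$ vanishing for $0<k<N$ by Lemma~\ref{qfacts}(b), and the remaining boundary terms are also killed by direct inspection. For $E^{(N)},F^{(N)}$ the divided power $E^N/[N]!$ is precisely calibrated to extract the unique surviving monomial from the expansion, producing the target pairing $\langle E,x\rangle_\eta$ (respectively $\langle F,x\rangle_\eta$).

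The main obstacle is the careful handling of the divided-power generators, where $[N]!=0$ at the specialization must be cancelled against the vanishing $q^4$-binomials in $\Delta^{N-1}$ applied to $N$-th powers. This is the quantum-group dual of the identity $(x+y)^N=x^N+y^N$ in a quantum torus from Lemma~\ref{qfacts}(c), and it is handled by the same kind of $q$-boson embedding used in the proof of Lemma~\ref{r.ad1}(b); the embedding expresses the $0/0$ form as a well-defined element of the integral form $U^L$, and simultaneously guarantees that $\Phi_\omega^\ast$ actually lands in $U^L_{\eta^4}$ rather than in the full Hopf dual. Once this step is in place the generator-by-generator comparison is routine, and uniqueness of a Hopf algebra map determined by its values on generators completes the identification $\Phi_\omega^\ast = f$.
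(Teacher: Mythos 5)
Your route is essentially the paper's: identify $\Phi_\omega$ on the bigon with the Hopf algebra map $a\mapsto a^N,\dots,d\mapsto d^N$ on $\mathcal{O}_{\eta^4}(SL(2))\to\mathcal{O}_{\omega^4}(SL(2))$ (Proposition \ref{bigonHopf}), then verify the adjunction identity $\langle x, f(u)\rangle_{\eta^4}=\langle \Phi_\omega(x),u\rangle_{\omega^4}$ for $u$ among the Lusztig generators $K^{\pm1},E,F,E^{(N)},F^{(N)}$ paired against $x\in\{a,b,c,d\}$, and propagate via the Hopf-pairing axioms and non-degeneracy (Proposition \ref{HopfDual} and Appendix \ref{HopfPairCompute}). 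Your scalar identity for $K$ is exactly the paper's $(-1)^{N+1}\eta^{4}=\omega^{4N}$, and reducing to generators on both sides is legitimate precisely because both $f$ and $\Phi_\omega$ are Hopf morphisms.

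The one step you have not actually justified is the divided-power computation. The $q$-boson embedding of Lemma \ref{r.ad1}(b) lives entirely on the function-algebra side --- it proves $T_N(a+d)=a^N+d^N$ in $\mathcal{O}_{\omega^4}(SL(2))$ --- and it neither expresses $E^{(N)}=E^N/[N]!$ as an element of the integral form nor computes any pairing, so it cannot deliver $\langle x^N,E^{(N)}\rangle_{\omega^4}=\langle x,E\rangle_{\eta^4}$. The paper never confronts a $0/0$ form at all: it takes $E^{(p)}, F^{(p)}$ as generators of $U^L$ with their closed integral coproduct formulas, e.g.\ $\Delta(E^{(p)})=\sum_{i=0}^{p}q^{2i(p-i)}E^{(p-i)}\otimes E^{(i)}K^{p-i}$, and computes $\langle x^m,E^{(p)}\rangle$ by induction on $m$, obtaining $\delta_{m,p}$ for $x=b$ and $0$ otherwise (Appendix \ref{HopfPairCompute}). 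Relatedly, no $q^4$-binomials arise in the $E$ and $F$ cases: those pairings vanish term by term because each summand of the iterated coproduct carries a factor $\langle x,E^{(i)}\rangle$ or $\varepsilon(x)$ that is zero for the relevant $x$. The statement you want is true and provable by these standard means, but as written the justification of the $E^{(N)},F^{(N)}$ case --- the only genuinely delicate generator --- rests on an inapplicable lemma.
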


\begin{proof}
This follows from a sequence of observations and computations which we will state in full detail and prove with the remainder of this section.  First observe that the composition of the Chebyshev-Frobenius homomorphism for the stated skein algebra of the bigon composed with the isomoprhism to $\mathcal{O}_{q^2}(SL(2))$ is exactly the map defined in Proposition \ref{bigonHopf}.  Then the Hopf dual of this map is seen in Proposition \ref{HopfDual}.
\end{proof}


\begin{proposition}
\label{bigonHopf}
Let $\omega$ be a root of unity with $N=\ord(\omega^8)$ and $\eta=\omega^{N^2}$. The map
\[\Phi_\omega:\mathcal{O}_{\eta^4}(SL(2))\rightarrow \mathcal{O}_{\omega^4}(SL(2))\]
defined on the algebra generators of $\mathcal{O}_{\eta^4}(SL(2))$ by
\[a\mapsto a^N, \qquad b\mapsto b^N\]
\[c\mapsto c^N, \qquad d\mapsto d^N\]
is an embedding of co-braided Hopf algebras.
\end{proposition}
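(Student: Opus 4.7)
The plan is to identify this algebra map with the Chebyshev-Frobenius homomorphism $\Phi_\omega \colon \cS_\eta(\fB) \to \cS_\omega(\fB)$ of Corollary \ref{r.surfacePhi}, transported across the isomorphism $\SB \cong \OSL$ of equation \eqref{eq.isoOSL}. The key observation is that in the bigon the algebra (stacking) power of a stated core arc $\alpha_{\mu\nu}$ coincides with its $N$-th framed power $\alpha_{\mu\nu}^{(N)}$: the framing on the core arc is vertical, so parallel pushoff along the framing recovers the same isotopy class of $N$ arcs as vertical stacking. Under this identification, well-definedness as an algebra map and injectivity both follow immediately from parts (a) and (b) of Corollary \ref{r.surfacePhi}.

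For the coalgebra compatibility, Subsection \ref{sec.bigon} explains that the coproduct $\Delta$ on $\SB$ is precisely the splitting homomorphism along a vertical chord of the bigon, and Corollary \ref{r.surfacePhi}(b) asserts that $\Phi_\omega$ commutes with splitting; this is exactly $(\Phi_\omega \otimes \Phi_\omega) \circ \Delta = \Delta \circ \Phi_\omega$. Counit compatibility is a direct check on the four generators using $\epsilon(a)=\epsilon(d)=1$ and $\epsilon(b)=\epsilon(c)=0$. Antipode compatibility is then automatic, since a bialgebra map between Hopf algebras uniquely intertwines their antipodes.

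What remains is co-braiding compatibility: $\rho_{\omega^4}(\Phi_\omega(x) \otimes \Phi_\omega(y)) = \rho_{\eta^4}(x \otimes y)$ for each of the sixteen pairs $(x,y) \in \{a,b,c,d\}^2$ in Definition \ref{co-Rmat}. Using the skein picture of $\rho$ from Figure \ref{fig:CoBraid}, applying $\Phi_\omega$ to both factors replaces one crossing of two stated arcs by an $N \times N$ grid of crossings. The plan is to resolve this grid using the skein relation (A) of Figure \ref{fig:stated-relations}, following the inductive pattern of Lemma \ref{Square}: since $\omega^8$ has order $N$, the intermediate quantum binomials $\binom{N}{k}_{\omega^8}$ vanish for $0 < k < N$ by Lemma \ref{qfacts}(b), collapsing the grid to at most two surviving diagrams whose coefficients combine into powers of $\eta = \omega^{N^2}$. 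The diagonal entries $\rho = q^{\pm 1}$ of Definition \ref{co-Rmat} follow from height exchange moves (Lemma \ref{r.refl}), and the vanishing entries follow from the trivial arc relations of Figure \ref{fig:stated-relations}(D).

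The main obstacle is the single non-monomial entry $\rho(b \otimes c) = q - q^{-3}$, where the $N\times N$ resolution produces diagrams involving nontrivial state exchanges that must reassemble into $\eta - \eta^{-3}$ rather than a single monomial. The cleanest route I foresee is to invoke the multiplicativity axiom $\rho(xy \otimes z) = \rho(x \otimes z_{(1)})\,\rho(y \otimes z_{(2)})$, together with the already-verified $\Delta$-compatibility, to reduce $\rho_{\omega^4}(b^N \otimes c^N)$ to iterated evaluations of $\rho_{\omega^4}$ on generators. This parallels how Lemma \ref{r.ad1}(b) reduces $T_N(a+d) = a^N + d^N$ to the quantum plane identity via an embedding into $A'$, and it is the place where the root-of-unity hypothesis $\ord(\omega^8)=N$ enters crucially.
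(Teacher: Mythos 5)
Your proposal matches the paper's proof in all essentials: algebra and coalgebra compatibility come from Theorem \ref{main}(a),(b) specialized to the bigon (using that for the core arc of the bigon the framed power equals the stacking power), injectivity comes from Corollary \ref{r.surfacePhi}, the counit is a direct check on generators, and the antipode is automatic for a unital, counital bialgebra map. The only point of divergence is the co-braiding step, where the paper argues geometrically via the cabled-crossing picture of Figure \ref{fig:CoBraidMorph} while you propose the direct verification on generators (reducing $\rho_{\omega^4}(x^N\otimes y^N)$ via multiplicativity of the co-$R$-matrix and the already-established $\Delta$-compatibility) --- a route the paper itself explicitly notes ``may be checked directly on generators,'' so both are valid.
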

\begin{proof}
We see that as a corollary of parts $(a)$ and $(b)$ of Theorem \ref{main} specialized to the stated skein algebra of the bigon, that $\Phi_\omega$ is an algebra and coalgebra morphism respectively.

Injectivity of this map follows from Corollary \ref{r.surfacePhi}.

We also have that $\Phi_\omega(1)=1$ and $\epsilon(\Phi_\omega(\alpha_{\mu,\nu}))=\epsilon(\alpha_{\mu,\nu}^N)=\epsilon(\alpha_{\mu,\nu})^N=\delta_{\mu,\nu}^N=\delta_{\mu,\nu}=\epsilon(\alpha_{\mu,\nu}).$  Then general results in the theory of Hopf algebras tells us that as $\Phi_\omega$ is a unital and counital bialgebra homomorphism it automatically preserves the antipode, but this may be checked directly.  

As a final step we look to show that $\Phi$ respects the co-braiding, meaning the following diagram commutes

\[\begin{tikzcd}
\mathcal{O}_{\eta^4}(SL(2))\otimes \mathcal{O}_{\eta^4}(SL(2)) \arrow[r, "\rho"] \arrow[d,"\Phi\otimes\Phi"']
& \mathbb{Z}[\eta^{\pm 1}] \arrow[d,hook] \\
\mathcal{O}_{\omega^4}(SL(2))\otimes \mathcal{O}_{\omega^4}(SL(2)) \arrow[r, "\rho"]
& \mathbb{Z}[\omega^{\pm 1}]
\end{tikzcd}\]

where the inclusion on the right is given by $\eta=\omega^{N^2}$.  This may be checked directly on generators, but we argue geometrically as seen in Figure \ref{fig:CoBraidMorph}.
\begin{figure}[htpb]
    \centering
    \includegraphics[scale=.35]{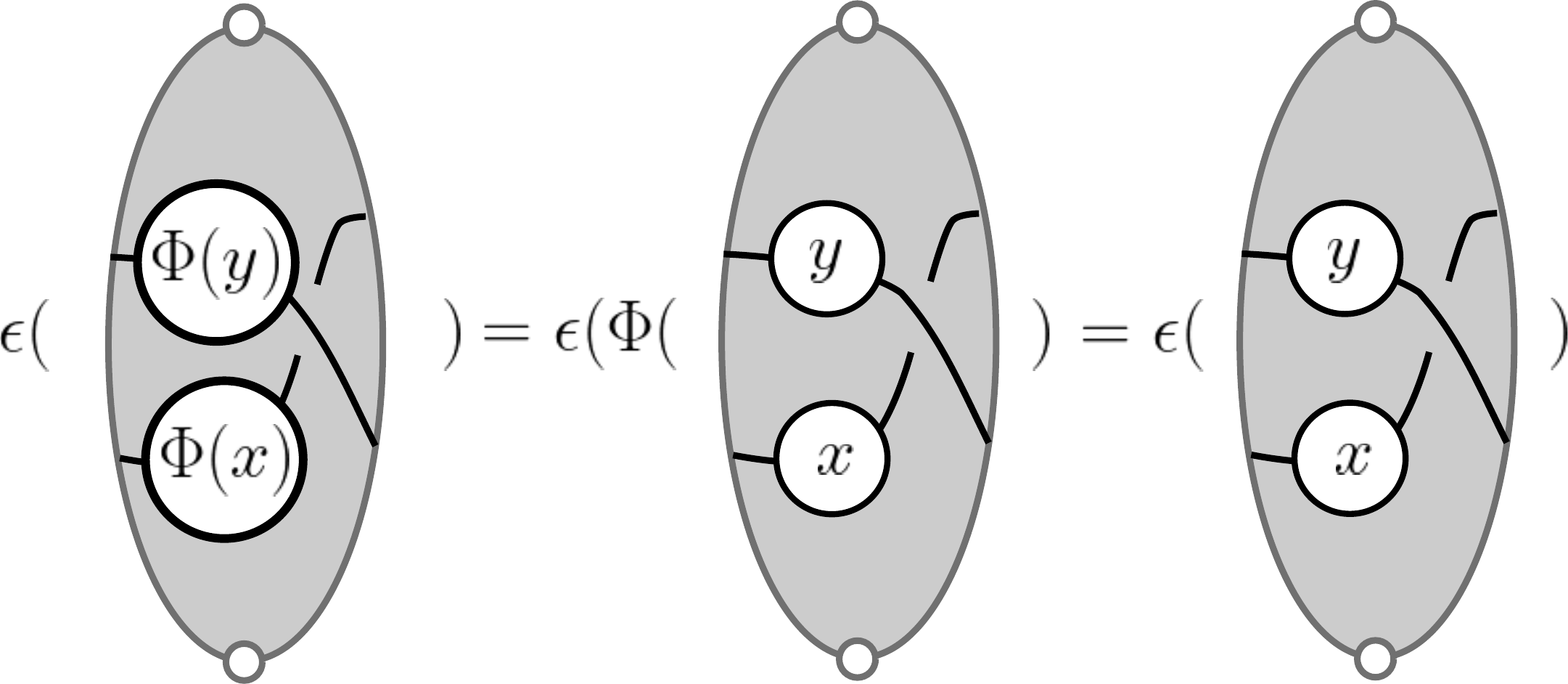}
    \caption{A geometric description of how $\Phi_\omega$ preserves the co-braiding.}
    \label{fig:CoBraidMorph}
\end{figure}

\end{proof} 

\begin{proposition}
\label{HopfDual}
When $\omega$ is a root of unity with $N=\ord(\omega^8)$ and $\eta=\omega^{N^2}$.  We have that the Frobenius map $f$ and the map $\Phi_\omega$ are dual with respect to the Hopf pairings between $\CO_{\eta^4}(SL(2))$ and $U_{\eta^4}(\mathfrak{sl}_2)$ and between  $\CO_{\omega^4}(SL(2))$ and $U_{\omega^4}(\mathfrak{sl}_2)$, meaning the following diagram commutes:

\[\begin{tikzcd}
 \CO_{\eta^4}(SL(2))\otimes U_{\omega^4}(\mathfrak{sl}_2) \arrow[rr,"\Phi\otimes Id"]\arrow[d,"Id\otimes f"]& &  \CO_{\omega^4}(SL(2))\otimes U_{\omega^4}(\mathfrak{sl}_2)\arrow[d,"\langle\text{,}\rangle_{\omega^4}"]\\
  \CO_{\eta^4}(SL(2))\otimes U_{\eta^4}(\mathfrak{sl}_2) \arrow[r,"\langle\text{,} \rangle_{\eta^4}"]&\mathbb{Z}[\eta^{\pm 4}]\arrow[r,hook,"\eta=\omega^{N^2}"] & \mathbb{Z}[\omega^{\pm 4}] 
\end{tikzcd}\]
\end{proposition}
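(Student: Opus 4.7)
The plan is to verify the adjointness identity
\[
\langle y, \Phi_\omega(x)\rangle_{\omega^4} \;=\; \iota\bigl(\langle f(y), x\rangle_{\eta^4}\bigr), \qquad y\in U^L_{\omega^4}(\mathfrak{sl}_2),\ x\in \mathcal{O}_{\eta^4}(SL(2)),
\]
where $\iota:\BZ[\eta^{\pm 4}]\hookrightarrow \BZ[\omega^{\pm 4}]$ is the inclusion induced by $\eta = \omega^{N^2}$. The first step is to reduce to a finite check on algebra generators. By Proposition \ref{bigonHopf}, $\Phi_\omega$ is a Hopf algebra homomorphism, and by Lusztig's construction $f$ is also a Hopf algebra homomorphism. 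Combined with the Hopf-pairing compatibilities $\langle uv,x\rangle = \langle u\ot v,\Delta(x)\rangle$ and $\langle u,xy\rangle = \langle \Delta(u),x\ot y\rangle$, the two bilinear maps $(y,x)\mapsto \langle y,\Phi_\omega(x)\rangle_{\omega^4}$ and $(y,x)\mapsto \iota(\langle f(y),x\rangle_{\eta^4})$ inherit the same compatibility with products and coproducts on both factors. A standard double induction on the lengths of $y$ and $x$ as products then propagates the identity from generator pairs to all of $U^L_{\omega^4}(\mathfrak{sl}_2)\otimes \mathcal{O}_{\eta^4}(SL(2))$. Since $U^L_{\omega^4}(\mathfrak{sl}_2)$ at the root of unity $\omega$ with $\ord(\omega^8)=N$ is generated by $\{K^{\pm 1}, E, F, E^{(N)}, F^{(N)}\}$, and $\mathcal{O}_{\eta^4}(SL(2))$ is generated by $\{a,b,c,d\}$, only a finite list of explicit identities needs to be verified.

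The case analysis splits as follows. For $y = K^{\pm 1}$, group-likeness gives $\langle K, \Phi_\omega(x)\rangle_{\omega^4} = \langle K,x\rangle_{\omega^4}^N$, while the right side equals $\iota((-1)^{N+1}\langle K,x\rangle_{\eta^4})$; the elementary identity $\omega^{4N} = (-1)^{N+1}$, obtained by case analysis on whether $\ord(\omega^4)$ equals $N$ or $2N$ (compare the constant $\mu$ appearing in Theorem \ref{r.transparent}), makes both sides agree for all four choices of $x$. For $y\in\{E,F\}$, where $f(y)=0$, one must show $\langle E, x^N\rangle_{\omega^4}=0$: a direct computation via the iterated coproduct of $E$ (using $\Delta(E)=1\ot E + E\ot K$) reduces this to a quantum binomial coefficient ${N\brack k}_{\omega^4}$ with $0<k<N$, which vanishes by Lemma \ref{qfacts}(b). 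For $y\in\{E^{(N)}, F^{(N)}\}$, where $f(E^{(N)})=E$, one passes to a generic parameter $q^{1/2}$, computes $\langle E^N, x^N\rangle_{q^2}$ over $\BQ(q^{1/2})$, divides by $[N]_{q^2}!$, and specializes $q^{1/2}\to \omega$; the surviving finite value then matches $\langle E,x\rangle_{\eta^4}$ under $\eta = \omega^{N^2}$. This last step can equivalently be verified representation-theoretically by identifying both sides with matrix coefficients of $E^{(N)}$ acting on the symmetric power $\mathrm{Sym}^N V$ of the fundamental representation.

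The main technical obstacle is the divided-power computation in the $E^{(N)}, F^{(N)}$ case, since a naive evaluation of $E^{(N)}=E^N/[N]_{q^2}!$ at the root of unity $q^{1/2}=\omega$ is a $0/0$ indeterminacy that must be resolved either by a limit argument in $q$, or by working directly with Lusztig's integral form and invoking the known integrality and non-degeneracy of the Hopf pairing on the divided-power basis. Once this single non-trivial quantum-algebra calculation is in hand for one generator pair (say $(E^{(N)}, b)$), the remaining pairs follow by the $SL_2$-symmetry built into the coproduct structure (interchanging $E\leftrightarrow F$, $a\leftrightarrow d$, $b\leftrightarrow c$), and the Hopf-compatibility extension from generators to arbitrary elements completes the proof.
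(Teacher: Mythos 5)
Your proposal is correct and follows essentially the same route as the paper: reduce to the generator pairs $\{K^{\pm1},E,F,E^{(N)},F^{(N)}\}\times\{a,b,c,d\}$ using that $\Phi_\omega$ and $f$ are Hopf homomorphisms compatible with the Hopf pairings, then verify each pair explicitly; the paper resolves your ``main technical obstacle'' by the second of your two options, namely working directly in Lusztig's integral form and proving $\langle b^m, E^{(p)}\rangle_{q^2}=\delta_{m,p}$ (and its $F$/$c$ analogue) by induction on $m$ from the coproduct formula $\Delta(E^{(p)})=\sum_i q^{2i(p-i)}E^{(p-i)}\otimes E^{(i)}K^{p-i}$, which sidesteps any $0/0$ specialization. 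One small correction to your $K$ case: the identity you need is $\omega^{4N}=(-1)^{N+1}\eta^{4}$ (equivalently $\omega^{4N(1-N)}=(-1)^{N+1}$), not $\omega^{4N}=(-1)^{N+1}$, since $\eta^4=\omega^{4N^2}$ can equal $-1$ (e.g.\ $N$ odd with $\ord(\omega^4)=2N$); the case analysis you propose does yield the corrected identity.
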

\begin{proof}
This is a direct computation on the generators of the algebras.  We will use matrix notation to indicate the map applied to each entry of the matrix.
\begin{align*}
\langle \left(\begin{array}{cc}
    a & b \\
    c & d
\end{array}\right),& (-1)^{N+1} K\rangle_{\eta^4}\\
&=\left(\begin{array}{cc}
    (-1)^{N+1}\eta^{4} & 0 \\
    0 & (-1)^{N+1}\eta^{-4}
\end{array}\right)=\left(\begin{array}{cc}
    \omega^{-4N^2+4N}\omega^{4N^2} & 0 \\
    0 & \omega^{4N^2-4N}\omega^{-4N^2}
\end{array}\right)\\
&=\left(\begin{array}{cc}
    \omega^{4N} & 0 \\
    0 & \omega^{-4N}
\end{array}\right)=\langle \left(\begin{array}{cc}
    a^N & b^N \\
    c^N & d^N
\end{array}\right), K\rangle_{\omega^4}\\
\langle \left(\begin{array}{cc}
    a & b \\
    c & d
\end{array}\right),  0\rangle_{\eta^4}&=\left(\begin{array}{cc}
    0 & 0 \\
    0 & 0
\end{array}\right)=\langle \left(\begin{array}{cc}
    a^N & b^N \\
    c^N & d^N
\end{array}\right),  E\rangle_{\omega^4}\\
\langle \left(\begin{array}{cc}
    a & b \\
    c & d
\end{array}\right),  0\rangle_{\eta^4}&=\left(\begin{array}{cc}
    0 & 0 \\
    0 & 0
\end{array}\right)=\langle \left(\begin{array}{cc}
    a^N & b^N \\
    c^N & d^N
\end{array}\right),  F\rangle_{\omega^4}\\
\langle \left(\begin{array}{cc}
    a & b \\
    c & d
\end{array}\right),  E\rangle_{\eta^4}&=\left(\begin{array}{cc}
    0 & 1 \\
    0 & 0
\end{array}\right)=\langle \left(\begin{array}{cc}
    a^N & b^N \\
    c^N & d^N
\end{array}\right),  E^{(N)}\rangle_{\omega^4}\\
\langle \left(\begin{array}{cc}
    a & b \\
    c & d
\end{array}\right),  F\rangle_{\eta^4}&=\left(\begin{array}{cc}
    0 & 0 \\
    1 & 0
\end{array}\right)=\langle \left(\begin{array}{cc}
    a^N & b^N \\
    c^N & d^N
\end{array}\right),  F^{(N)}\rangle_{\omega^4}
\end{align*}

Where further details can be found in the Appendix \ref{HopfPairCompute}.

\end{proof}

Now as an application we look to understand how the Chebyshev-Frobenius of the stated skein algebras of surfaces behaves with respect to the gluing of an ideal triangle along two distinct boundary edges.

Let $\fS$ be a punctured bordered surface, which may be disconnected, together with two distinguished boundary edges $e_1$ and $e_2$.  Then define $\underline{\fS}$ to be the result of gluing an ideal triangle along the edges $e_1$ and $e_2$.  The following result is based on the work of the second author and Constantino, seen in \cite{CL}, describing the stated skein algebra of $\underline{\fS}$ in terms of $\fS$. 

The result will require the notion of the self braided tensor product of a comodule algebra.  Let $U$ is a co-quasitriangular Hopf algebra, and $A$ a algebra admitting two different co-actions, $\Delta_1$ and $\Delta_2$, which make $A$ into a comodule algebra over $U$.  Then if the two co-actions commute a new twisted multiplication can be defined on $A$ defined by 
\[x\underline{*} y=\sum x'y'\rho(u\otimes v)\]
where $\Delta_2(x)=\sum x'\otimes u$ and $\Delta_1(y)=\sum y'\otimes v$.  In a sense this twisted algebra, denoted $\underline{\bigotimes} A$, is defined to be a simultaneous $U$ comodule algebra with a coaction defined by $\underline{\Delta}(x)=\sum x'\otimes u_1u_2$, where $\Delta_1(x)=\sum x'\otimes u_1$ and $\Delta_2(x)=\sum x'\otimes u_2$.  
\begin{proposition}
\label{triangGlue}
Assume $\fS$ and $\underline{\fS}$ are as described above.  Then the Chebyshev-Frobenius homomorphism fits into the following commutative diagram
\be
\begin{tikzcd}
\mathscr{S}_{\eta}(\underline{\fS})\arrow[r,"\Phi_\omega"]
\arrow[d,"\cong"']  
&  \mathscr{S}_{\omega}(\underline{\fS})\arrow[d,"\cong"] \\
\underline{\bigotimes}\mathscr{S}_{\eta}(\fS)\arrow[r,"\Phi_\omega"']
&  \underline{\bigotimes}\mathscr{S}_{\omega}(\fS) \\
\end{tikzcd}
\ee
where $\underline{\bigotimes}\mathscr{S}(\fS)$ denotes the self braided tensor product, which has the same underlying generating set as $\cS(\fS)$ but with a twisted multiplication allowing for an analogous definition of $\Phi_\omega$.
\end{proposition}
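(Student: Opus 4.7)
The plan is to realize the isomorphism $\cS(\underline\fS)\xrightarrow{\sim}\underline{\bigotimes}\cS(\fS)$ of \cite{CL} explicitly via the splitting homomorphism, and then conclude by combining two compatibilities already established: (i) $\Phi_\omega$ commutes with splitting (Theorem \ref{main}(b), resp.~Corollary \ref{r.surfacePhi}(b)); and (ii) on the bigon $\Phi_\omega$ is a morphism of co-braided Hopf algebras, in particular respecting the co-$R$-matrix $\rho$ (Proposition \ref{bigonHopf}, together with its geometric interpretation in Figure \ref{fig:CoBraidMorph}).

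First, I would recall from \cite{CL} how to identify $\cS(\underline\fS)$ with $\underline{\bigotimes}\cS(\fS)$. After gluing the triangle $\mathfrak T$ to $\fS$ along $e_1$ and $e_2$, the two glued edges become ideal arcs $c_1,c_2$ in $\underline\fS$. The iterated splitting homomorphism
\[\Theta_{c_1}\circ\Theta_{c_2}:\cS(\underline\fS)\hookrightarrow \cS(\fS)\otimes\cS(\mathfrak T)\]
is an embedding, and composed with the isomorphism $\cS(\mathfrak T)\cong \SB\,\underline\otimes\,\SB\cong \OSL\,\underline\otimes\,\OSL$ of \cite{CL}, together with the two $\OSL$-coactions on $\cS(\fS)$ coming from the edges $e_1,e_2$ (as in Subsection~\ref{sec.bigon}), it identifies $\cS(\underline\fS)$ with $\cS(\fS)$ equipped with the twisted multiplication $\underline *$ built from $\rho$ and the two coactions. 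This gives the explicit description of $\underline{\bigotimes}\cS(\fS)$ on the generator level, and makes clear what it means to define $\Phi_\omega$ on $\underline{\bigotimes}\cS(\fS)$: one takes the same formula as on $\cS(\fS)$ (threading $\cN$-knots by $T_N$ and raising $\cN$-arcs to their framed $N$-th power).

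Second, I would check commutativity of the square by a diagram chase. By Corollary~\ref{r.surfacePhi}(b), $\Phi_\omega$ commutes with $\Theta_{c_1}\circ\Theta_{c_2}$, so it is enough to check that the right hand side of the identification commutes with $\Phi_\omega$. On the $\cS(\fS)$ factor this is tautological, and on the $\cS(\mathfrak T)$ factor the braided tensor product decomposition $\SB\,\underline\otimes\,\SB$ is itself built from a splitting of $\mathfrak T$ into two bigons plus the co-$R$-matrix on $\SB$; the splitting piece is controlled again by Corollary~\ref{r.surfacePhi}(b), while the co-$R$-matrix piece is controlled by Proposition \ref{bigonHopf}, which says
\[\rho_{\omega^4}\circ(\Phi_\omega\otimes\Phi_\omega)=\rho_{\eta^4}.\]
The two $\OSL$-coactions $\Delta_{e_1},\Delta_{e_2}$ on $\cS(\fS)$ are special cases of the splitting homomorphism for a bigon-collar near $e_i$, hence also commute with $\Phi_\omega$. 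Putting these pieces together shows that $\Phi_\omega$ carries $x\cdot y$ in $\cS(\underline\fS)$ to $\Phi_\omega(x)\,\underline*\,\Phi_\omega(y)$ in $\underline{\bigotimes}\cS(\fS)$, which is exactly the commutativity claimed.

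The main obstacle will be purely bookkeeping: each use of compatibility-with-splitting introduces sums over states on the newly created arcs, and one must verify that the resulting sum matches the sum produced by $\rho$ applied to the two coactions term-by-term. This reduces to the identity
\[\rho_{\omega^4}\bigl(\Phi_\omega(u)\otimes \Phi_\omega(v)\bigr)=\rho_{\eta^4}(u\otimes v),\]
which is Proposition \ref{bigonHopf}, combined with the fact that $\Delta_{e_i}$ intertwines $\Phi_\omega$ on $\cS(\fS)$ and on $\OSL$, both of which are special cases of results already proved. No genuinely new geometric input is required beyond what is recorded in Theorem \ref{main}(b) and Proposition \ref{bigonHopf}; the content of the proposition is that these suffice to lift to the twisted algebra structure on the glued surface.
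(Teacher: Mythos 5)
Your proposal is correct and follows essentially the same route as the paper: both rely on the identification of $\cS(\underline\fS)$ with the self-braided tensor product from Theorem 4.17 of \cite{CL}, then reduce commutativity of the square to the two already-established compatibilities of $\Phi_\omega$ with the splitting homomorphism (Corollary \ref{r.surfacePhi}) and with the co-$R$-matrix $\rho$ (Proposition \ref{bigonHopf}), checking the result on a generating set. Your unwinding of the isomorphism via $\Theta_{c_1}\circ\Theta_{c_2}$ and the decomposition $\cS(\mathfrak T)\cong\SB\,\underline\otimes\,\SB$ is just a more explicit version of what the paper states.
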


\begin{proof}
Note that we will use $e_i$ interchangeably to describe a distinguished boundary edge and an ideal arc isotopic to the boundary edge.  Then we see that $\cS(\fS)$ is a comodule algebra algebra over $\mathcal{O}_{q^2}(SL(2))$ over the distinguished edges $e_1$ and $e_2$ in a way that commutes.  Thus we can define the self braided tensor product $\underline{\bigotimes}\cS(\fS)$.   Now let $x,y\in\cS(\fS)$,
\[\Theta_{e_2}(x)=\sum x'\otimes u,\]
and
\[\Theta_{e_1}(y)=\sum y'\otimes v\]
then 
\[x\underline{*} y=\sum x'y'\rho(u\otimes v)\]
where $\rho$ is the co-R-matrix of $\mathcal{O}_{q^2}(SL(2))$. The canonical isomorphisms of the vertical arrows in the commutative diagram are proven in Theorem 4.17 of \cite{CL}.  Then the result follows from noting that $\Phi_\omega$ commutes with both $\rho$ (as seen in Proposition \ref{bigonHopf}) and the splitting homormoprhism (as seen for surfaces in Corollary \ref{r.surfacePhi}).  Then we see the diagram commutes on a generating set, and so commutes in general.
\end{proof}

When specializing the above proposition to the case of $\fS=\fS_1\sqcup \fS_2$, with $e_1\subseteq \fS_1$ and $e_2\subseteq \fS_2$ we have that the self braided tensor product recovers the ordinary braided tensor product, $\cS(\fS_1)\underline{\otimes}\cS(\fS_2)$.  The unfamiliar reader, may take the definition of the braided tensor product to be the self braided tensor product in the case that $A=A_1\otimes A_2$ is the tensor product of two comodule algebras.   In particular, realizing the ideal triangle as the result of gluing two bigons along a triangle we have the following isomorphism (seen in Theorem $4.17$ of \cite{CL}):
\[\cS(\mathfrak{T})\cong \mathcal{O}_{q^2}(SL(2))\underline{\otimes} \mathcal{O}_{q^2}(SL(2)).\]
Now utilizing the compatibility of $\Phi_\omega$ with the splitting homomorphism, we note that $\Phi_\omega$ for a triangulable surface is exactly determined by $\Phi_\omega:\cS_\eta(\mathfrak{T})\rightarrow \cS_\omega(\mathfrak{T})$, which by Proposition \ref{triangGlue} is determined by the Hopf dual of Lustztig's Frobenius homomorphism.  This can be summarized in the following corollary which extends the commutative diagram seen in equation \ref{triangdecomp}.
\begin{corollary}
\label{surfextend}
Let $\fS$ be a triangulable punctured bordered surface.  Then the Chebyshev-Frobenius homomorphism fits into the following commutative diagram for any ideal triangulation $\Delta$:

\[\begin{tikzcd}[column sep=huge]
\cS_\eta(\fS) \arrow[r,"\Phi_\omega"]\arrow[d,hook,"\Theta_\Delta"'] & \cS_\eta(\fS)\arrow[d,hook,"\Theta_\Delta"] \\
\displaystyle{\bigotimes_{\mathfrak{T}\in\Delta} \cS_\eta(\mathfrak{T})}\arrow[d,"\cong"']\arrow[r,"\bigotimes \Phi_\omega"] &\arrow[d,"\cong"] \displaystyle{\bigotimes_{\mathfrak{T}\in\Delta}\cS_\omega(\mathfrak{T})} \\
\bigotimes \mathcal{O}_{\eta^4}(SL(2))\underline{\otimes}\mathcal{O}_{\eta^4}(SL(2))\arrow[r,"\bigotimes (f^*\otimes f^*)"] & \bigotimes\mathcal{O}_{\omega^4}(SL(2))\underline{\otimes}\mathcal{O}_{\omega^4}(SL(2))
\end{tikzcd}\]
where the bottom map is determined by the Hopf dual of Lusztig's Frobenius homomorphism, $f$.
\end{corollary}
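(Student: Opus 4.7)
The plan is to verify the two squares of the displayed diagram separately; the full statement then follows by vertical composition. The top square is an iterated application of Corollary \ref{r.surfacePhi}(b), while the bottom square reduces, triangle by triangle, to a combination of Proposition \ref{triangGlue} and Theorem \ref{uniqueness}.

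First I would handle the top square. Enumerate the interior ideal arcs $c_1,\dots,c_r$ of $\Delta$; since they are pairwise disjoint the splitting homomorphisms $\Theta_{c_i}$ commute, and $\Theta_\Delta = \Theta_{c_r}\circ\cdots\circ\Theta_{c_1}$. Corollary \ref{r.surfacePhi}(b) gives $\Phi_\omega\circ\Theta_{c_i}=\Theta_{c_i}\circ\Phi_\omega$ for each $i$; iterating yields $\Phi_\omega\circ\Theta_\Delta=\Theta_\Delta\circ\Phi_\omega$, which is exactly the commutativity of the top square. Injectivity of $\Theta_\Delta$ is an iterated consequence of Theorem \ref{surfacesplit}.

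For the bottom square, the vertical isomorphism $\cS(\mathfrak{T})\cong\mathcal{O}_{q^2}(SL(2))\,\underline{\otimes}\,\mathcal{O}_{q^2}(SL(2))$ supplied by Theorem~4.17 of \cite{CL} is a co-braided Hopf algebra isomorphism, and the full vertical map is the tensor product of these isomorphisms over all $\mathfrak{T}\in\Delta$. Consequently it suffices to check commutativity of one triangle-factor. Applying Proposition \ref{triangGlue} with $\fS=\fB\sqcup\fB$ (so that $\underline{\fS}=\mathfrak{T}$), the top horizontal arrow of that factor is identified, after conjugation by the vertical isomorphism, with the map $\Phi_\omega\,\underline{\otimes}\,\Phi_\omega$ on $\SB\,\underline{\otimes}\,\SB$. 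By Theorem \ref{uniqueness} (together with Propositions \ref{bigonHopf} and \ref{HopfDual}), each factor $\Phi_\omega:\SB\to\SB$ corresponds under the isomorphism $\SB\cong\mathcal{O}_{q^2}(SL(2))$ to the Hopf dual $f^*$ of Lusztig's Frobenius. Since $\Phi_\omega$ intertwines the co-$R$-matrices (as recorded in Proposition \ref{bigonHopf}), it descends to a morphism of braided tensor products, producing $f^*\otimes f^*$ as the bottom arrow.

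The main point at which care, rather than genuine difficulty, is needed is a bookkeeping check: one must verify that the two co-actions of $\mathcal{O}_{q^2}(SL(2))$ on $\cS(\mathfrak{T})$ used to build the isomorphism of Theorem~4.17 of \cite{CL} coincide with the two co-actions appearing in Proposition \ref{triangGlue} when $\fS=\fB\sqcup\fB$. This is implicit in the construction but should be spelled out so that the braided tensor product structure on $\SB\,\underline{\otimes}\,\SB$ matches the one carrying $f^*\otimes f^*$ on the right-hand side. Once this compatibility is in place, the two commutative squares assemble into the asserted diagram and the corollary follows.
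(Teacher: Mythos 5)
Your proposal is correct and follows essentially the same route as the paper, which presents this corollary as a direct summary of the preceding results: the top square is the iterated splitting compatibility from Corollary \ref{r.surfacePhi} (the diagram in equation \eqref{triangdecomp}), and the bottom square is Proposition \ref{triangGlue} applied to $\mathfrak{T}$ realized as two bigons glued along a triangle, with each bigon factor identified with $f^*$ via Theorem \ref{uniqueness}. Your closing remark about matching the two co-actions in Theorem 4.17 of \cite{CL} with those of Proposition \ref{triangGlue} is a reasonable bookkeeping point that the paper leaves implicit.
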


\section{Acknowledgments}  The authors would like to thank F. Bonahon, C. Frohman, J. Kania-Bartozynska, A. Kricker, G. Masbaum, G. Muller,  A. Sikora, D. Thurston. The second author  would like to thank the CIMI Excellence Laboratory, Toulouse, France, for inviting him on a Excellence Chair during the period of January -- July 2017 when part of this work was done. The second author is supported in part by NSF grant DMS 1811114.  The first author is supported in part by NSF Grant DMS-17455.
\appendix

\section{Divided powers and the Hopf pairing}
\label{HopfPairCompute}

In this appendix we provide the computations required to show that $\Phi_\omega$ is the hopf dual of the frobenius homomorphism. 

\begin{proposition}
With the notation of Section \ref{QGroupSect}, we have for any $m$ and any $p$.
\[
    \langle \left(\begin{array}{cc}
    a^m & b^m \\
    c^m & d^m
\end{array}\right),  K\rangle_{q^2}=\left(\begin{array}{cc}
    q^{2m} & 0 \\
    0 & q^{-2m}
\end{array}\right)\]
and 
\[
    \langle \left(\begin{array}{cc}
    a^m & b^m \\
    c^m & d^m
\end{array}\right),  E^{(p)}\rangle_{q^2}=\left(\begin{array}{cc}
    \delta_{0,p} & \delta_{m,p} \\
    0 & \delta_{0,p}
\end{array}\right)\]
and 

\[
    \langle \left(\begin{array}{cc}
    a^m & b^m \\
    c^m & d^m
\end{array}\right),  F^{(p)}\rangle_{q^2}=\left(\begin{array}{cc}
    \delta_{0,p} & 0 \\
    \delta_{m,p} & \delta_{0,p} 
\end{array}\right)\]
\end{proposition}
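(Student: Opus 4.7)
The plan is to identify the elements $a, b, c, d$ in $\mathcal{O}_{q^2}(SL(2))$ with matrix coefficients of the standard two-dimensional representation $V = \mathbb{Q}(q^{1/2}) v_+ \oplus \mathbb{Q}(q^{1/2}) v_-$ of $U_{q^2}(\mathfrak{sl}_2)$, on which $K v_\pm = q^{\pm 2} v_\pm$, $E v_+ = 0$, $E v_- = v_+$, $F v_+ = v_-$, $F v_- = 0$. Writing $c_{i,j}(u) := \langle v_i^*, u \cdot v_j\rangle$ for $i,j \in \{+,-\}$, the Hopf pairing on generators identifies $a = c_{+,+}$, $b = c_{+,-}$, $c = c_{-,+}$, $d = c_{-,-}$. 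Since the product of matrix coefficients in $\mathcal{O}_{q^2}(SL(2))$ corresponds to tensor product of representations, one has
\[
a^m(u) = \langle (v_+^*)^{\otimes m}, u \cdot v_+^{\otimes m}\rangle, \qquad b^m(u) = \langle (v_+^*)^{\otimes m}, u \cdot v_-^{\otimes m}\rangle,
\]
and similarly $c^m(u) = \langle (v_-^*)^{\otimes m}, u \cdot v_+^{\otimes m}\rangle$, $d^m(u) = \langle (v_-^*)^{\otimes m}, u \cdot v_-^{\otimes m}\rangle$, where $u$ acts on $V^{\otimes m}$ through the iterated coproduct.

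For the $K$ identity: since $\Delta K = K \otimes K$, the iterated coproduct gives $K$ acting by $K^{\otimes m}$, so $K \cdot v_\pm^{\otimes m} = q^{\pm 2m} v_\pm^{\otimes m}$, and reading off the four matrix coefficients yields $q^{2m}, 0, 0, q^{-2m}$ for $a^m, b^m, c^m, d^m$ respectively.

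For the $E^{(p)}$ identity, the coproduct $\Delta E = 1 \otimes E + E \otimes K$ yields the quantum Leibniz rule
\[
E \cdot (u_1 \otimes \cdots \otimes u_m) = \sum_{k=1}^m u_1 \otimes \cdots \otimes u_{k-1} \otimes (E u_k) \otimes (K u_{k+1}) \otimes \cdots \otimes (K u_m).
\]
Since $E v_+ = 0$, iteration gives $E^p \cdot v_+^{\otimes m} = 0$ for every $p \geq 1$, so $\langle a^m, E^{(p)}\rangle = \langle c^m, E^{(p)}\rangle = 0$ for such $p$; the $p = 0$ cases reduce to $\epsilon(a^m) = 1$ and $\epsilon(c^m) = 0$ (for $m \geq 1$). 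For $d^m$, the vector $E^p v_-^{\otimes m}$ has exactly $p$ factors of $v_+$ and so has no $v_-^{\otimes m}$ component when $p \geq 1$, giving $\langle d^m, E^{(p)}\rangle = \delta_{0,p}$. The key step is an induction on $m$, using the Leibniz rule, to prove
\[
E^m \cdot v_-^{\otimes m} = [m]_{q^2}!\, v_+^{\otimes m},
\]
so that $E^{(m)} v_-^{\otimes m} = v_+^{\otimes m}$ and $\langle b^m, E^{(m)}\rangle = 1$; for $p < m$ the resulting tensor has $p$ copies of $v_+$ and $m - p$ copies of $v_-$, none equal to $v_+^{\otimes m}$, and for $p > m$ the iteration annihilates, so $\langle b^m, E^{(p)}\rangle = \delta_{m,p}$ overall.

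The $F^{(p)}$ statement is completely symmetric, using $\Delta F = K^{-1} \otimes F + F \otimes 1$, $F v_- = 0$, $F v_+ = v_-$, and the dual identity $F^m \cdot v_+^{\otimes m} = [m]_{q^2}!\, v_-^{\otimes m}$. The only real obstacle is the inductive computation of $E^m v_-^{\otimes m}$: at each step one must sum the contributions over the position $k$ where $E$ acts, weighted by the powers of $q^{\pm 2}$ that come from $K$ acting on the $v_-$'s to the right, and check that these combine into the symmetric quantum integer $[m]_{q^2}$. This is a standard, if slightly fiddly, quantum-group computation.
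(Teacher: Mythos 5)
Your argument is correct, but it takes a genuinely different route from the paper. The paper works entirely on the Hopf-algebra side: it computes each of the twelve pairings by induction on $m$, expanding $\langle x^m, u\rangle$ via the explicit coproduct formulas $\Delta(E^{(p)})=\sum_i q^{2i(p-i)}E^{(p-i)}\otimes E^{(i)}K^{p-i}$ and $\Delta(F^{(p)})=\sum_i q^{-2i(p-i)}F^{(i)}K^{-(p-i)}\otimes F^{(p-i)}$ together with the generator pairings, and watching most terms vanish. You instead reinterpret $a^m,b^m,c^m,d^m$ as matrix coefficients of $V^{\otimes m}$ with $U_{q^2}(\mathfrak{sl}_2)$ acting through the iterated coproduct, which turns the whole proposition into a statement about the weight structure of $V^{\otimes m}$: the $K$-row is the weight computation, the vanishing entries follow because $E^{(p)}v_{-}^{\otimes m}$ (resp.\ $F^{(p)}v_{+}^{\otimes m}$) lives in the wrong weight space, and the two nontrivial entries reduce to $E^{m}v_{-}^{\otimes m}=[m]_{q^2}!\,v_{+}^{\otimes m}$ and its mirror. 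This is more conceptual and explains \emph{why} the answer is a Kronecker delta, at the cost of deferring that one identity, which you only sketch; it is indeed standard (it is the statement that $v_{\pm}^{\otimes m}$ are the extreme weight vectors of the Weyl module inside $V^{\otimes m}$, and the $q^{\pm2}$ weights from the $K$-legs sum to $[m]_{q^2}$ at each step), but a complete write-up should carry out that induction, since it is the only place where a nontrivial cancellation occurs. The paper's version buys self-containedness at the price of twelve parallel formula manipulations; yours buys brevity and insight at the price of one standard lemma. One further point worth making explicit in your version: the identification of products in $\mathcal{O}_{q^2}(SL(2))$ with matrix coefficients of tensor products relies on the duality axiom $\langle xy,u\rangle=\sum\langle x,u_{(1)}\rangle\langle y,u_{(2)}\rangle$ in the same order convention the paper uses, which you should state once so the reader can match your $V_1\otimes V_2$ ordering to the paper's induction.
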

\begin{proof}
Each of the twelve computations will follow from induction.

First we recall 
\[\Delta(K)=K\otimes K,\]

and so inducting on $m$

\begin{align*} 
&\langle a^m,K\rangle=\langle a,K\rangle \langle a^{m-1},K\rangle=q^{2}q^{2m-2}=q^{2m}.\\
&\langle b^m,K\rangle =\langle b,K\rangle \langle b^{m-1},K\rangle=0\\
&\langle c^m,K\rangle =\langle c,K\rangle \langle c^{m-1},K\rangle=0\\
&\langle d^m,K\rangle=\langle d,K\rangle \langle d^{m-1},K\rangle=q^{-2}q^{-2m+2}=q^{-2m}.\\
\end{align*}

Next recall 
\[\Delta(E^{(p)})=\sum_{i=0}^p q^{2i(p-i)} E^{(p-i)}\otimes E^{(i)}K^{p-i}.\]

First we see that if $p=0$ we have 
\[
    \langle \left(\begin{array}{cc}
    a^m & b^m \\
    c^m & d^m
\end{array}\right),  1 \rangle_{q^2}=\left(\begin{array}{cc}
    \epsilon(a^m) & \epsilon(b^m) \\
    \epsilon(c^m) & \epsilon(d^m)
\end{array}\right)=\left(\begin{array}{cc}
    1 & 0 \\
    0 & 1
\end{array}\right)\]

Now we induct on $m$, assuming $p\neq 0$,

\begin{align*}
\langle a^m,E^{(p)}\rangle&=\sum_{i=0}^pq^{2i(p-i)} \langle a^{m-1},E^{(p-i)}\rangle\langle a, E^{(i)}K^{p-i}\rangle\\
&=\sum_{i=0}^p q^{2i(p-i)}\langle a^{m-1},E^{(p-i)}\rangle(\langle a, E^{(i)}\rangle\langle a,K^{p-i}\rangle +\langle b,E^{(i)}\rangle\langle c,K^{p-i}\rangle ) \\
&=\sum_{i=0}^p q^{2i(p-i)}q^{2(p-i)}\langle a^{m-1},E^{(p-i)}\rangle\langle a, E^{(i)}\rangle=0 \\
\langle d^m,E^{(p)}\rangle & =\sum_{i=0}^pq^{2i(p-i)} \langle d^{m-1},E^{(p-i)}\rangle\langle d, E^{(i)}K^{p-i}\rangle\\
&=\sum_{i=0}^p q^{2i(p-i)}\langle d^{m-1},E^{(p-i)}\rangle(\langle d, E^{(i)}\rangle\langle d,K^{p-i}\rangle +\langle c,E^{(i)}\rangle\langle b,K^{p-i}\rangle ) \\
&=\sum_{i=0}^p q^{2i(p-i)}q^{-2(p-i)}\langle d^{m-1},E^{(p-i)}\rangle\langle d, E^{(i)}\rangle=0 \\
\langle c^m,E^{(p)}\rangle&=\sum_{i=0}^pq^{2i(p-i)} \langle c^{m-1},E^{(p-i)}\rangle\langle c, E^{(i)}K^{p-i}\rangle\\
&=\sum_{i=0}^p q^{2i(p-i)}\langle c^{m-1},E^{(p-i)}\rangle(\langle c, E^{(i)}\rangle\langle a,K^{p-i}\rangle +\langle d,E^{(i)}\rangle\langle c,K^{p-i}\rangle ) \\
&=\sum_{i=0}^p q^{2i(p-i)}q^{2(p-i)}\langle c^{m-1},E^{(p-i)}\rangle\langle c, E^{(i)}\rangle=0 \\
\langle b^m,E^{(p)}\rangle&=\sum_{i=0}^pq^{2i(p-i)} \langle b^{m-1},E^{(p-i)}\rangle\langle b, E^{(i)}K^{p-i}\rangle\\
&=\sum_{i=0}^p q^{2i(p-i)}\langle b^{m-1},E^{(p-i)}\rangle(\langle a, E^{(i)}\rangle\langle b,K^{p-i}\rangle +\langle b,E^{(i)}\rangle\langle d,K^{p-i}\rangle ) \\
&=\sum_{i=0}^p q^{2i(p-i)}q^{-2(p-i)}\langle b^{m-1},E^{(p-i)}\rangle\langle b, E^{(i)}\rangle=\sum_{i=0}^p q^{2i(p-i)}q^{-2(p-i)}\delta_{m-1,p-i}\delta_{1,i}\\
&=q^{2(p-1)}q^{-2(p-1)}\delta_{m-1,p-1}=\delta_{m,p}\\
\end{align*}

Next recall 
\[\Delta(F^{(p)})=\sum_{i=0}^p q^{-2i(p-i)} F^{(i)}K^{-(p-i)}\otimes F^{(p-i)}.\]

First we see that if $p=0$ we have 
\[
    \langle \left(\begin{array}{cc}
    a^m & b^m \\
    c^m & d^m
\end{array}\right),  1 \rangle_{q^2}=\left(\begin{array}{cc}
    \epsilon(a^m) & \epsilon(b^m) \\
    \epsilon(c^m) & \epsilon(d^m)
\end{array}\right)=\left(\begin{array}{cc}
    1 & 0 \\
    0 & 1
\end{array}\right)\]

Now we induct on $m$, assuming $p\neq 0$,

\begin{align*} 
\langle a^m,F^{(p)}\rangle&=\sum_{i=0}^pq^{-2i(p-i)} \langle a,F^{(i)}K^{-(p-i)}\rangle\langle a^{m-1}, F^{(p-i)}\rangle\\
&=\sum_{i=0}^p q^{-2i(p-i)}(\langle a, F^{(i)}\rangle\langle a,K^{-(p-i)}\rangle +\langle b,F^{(i)}\rangle\langle c,K^{-(p-i)}\rangle )\langle a^{m-1},F^{(p-i)}\rangle \\
&=\sum_{i=0}^p q^{-2i(p-i)}q^{-2(p-i)}\langle a,F^{(i)}\rangle\langle a^{m-1}, F^{(p-i)}\rangle=0 \\
\langle d^m,F^{(p)}\rangle&=\sum_{i=0}^pq^{-2i(p-i)} \langle d,F^{(i)}K^{-(p-i)}\rangle\langle d^{m-1}, F^{(p-i)}\rangle\\
&=\sum_{i=0}^p q^{-2i(p-i)}(\langle d, F^{(i)}\rangle\langle d,K^{-(p-i)}\rangle +\langle c,F^{(i)}\rangle\langle b,K^{-(p-i)}\rangle )\langle d^{m-1},F^{(p-i)}\rangle \\
&=\sum_{i=0}^p q^{-2i(p-i)}q^{2(p-i)}\langle d,F^{(i)}\rangle\langle d^{m-1}, F^{(p-i)}\rangle=0 \\
\langle b^m,F^{(p)}\rangle&=\sum_{i=0}^pq^{-2i(p-i)} \langle b,F^{(i)}K^{-(p-i)}\rangle\langle b^{m-1}, F^{(p-i)}\rangle\\
&=\sum_{i=0}^p q^{-2i(p-i)}(\langle a, F^{(i)}\rangle\langle b,K^{-(p-i)}\rangle +\langle b,F^{(i)}\rangle\langle d,K^{-(p-i)}\rangle )\langle b^{m-1},F^{(p-i)}\rangle \\
&=\sum_{i=0}^p q^{-2i(p-i)}q^{2(p-i)}\langle b,F^{(i)}\rangle\langle b^{m-1}, F^{(p-i)}\rangle=0 \\
\langle c^m,F^{(p)}\rangle&=\sum_{i=0}^pq^{-2i(p-i)} \langle c,F^{(i)}K^{-(p-i)}\rangle\langle c^{m-1}, F^{(p-i)}\rangle\\
&=\sum_{i=0}^p q^{-2i(p-i)}(\langle c, F^{(i)}\rangle\langle a,K^{-(p-i)}\rangle +\langle d,F^{(i)}\rangle\langle c,K^{-(p-i)}\rangle )\langle c^{m-1},F^{(p-i)}\rangle \\
&=\sum_{i=0}^p q^{-2i(p-i)}q^{2(p-i)}\langle c,F^{(i)}\rangle\langle c^{m-1}, F^{(p-i)}\rangle\sum_{i=0}^p q^{-2i(p-i)}q^{2(p-i)}\delta_{1,i}\delta_{m-1,p-i}\\
&=q^{-2(p-1)}q^{2(p-1)}\delta_{m-1,p-1}=\delta_{m,p}\\
\end{align*}

\end{proof}


\end{document}